\begin{document}
 \newcounter{thlistctr}
 \newenvironment{thlist}{\
 \begin{list}%
 {\alph{thlistctr}}%
 {\setlength{\labelwidth}{2ex}%
 \setlength{\labelsep}{1ex}%
 \setlength{\leftmargin}{6ex}%
 \renewcommand{\makelabel}[1]{\makebox[\labelwidth][r]{\rm (##1)}}%
 \usecounter{thlistctr}}}%
 {\end{list}}

\thispagestyle{empty}

\newtheorem{Lemma}{\bf LEMMA}[section]
\newtheorem{Theorem}[Lemma]{\bf THEOREM}
\newtheorem{Claim}[Lemma]{\bf CLAIM}
\newtheorem{Corollary}[Lemma]{\bf COROLLARY}
\newtheorem{Proposition}[Lemma]{\bf PROPOSITION}
\newtheorem{Example}[Lemma]{\bf EXAMPLE}
\newtheorem{Fact}[Lemma]{\bf FACT}
\newtheorem{definition}[Lemma]{\bf DEFINITION}
\newtheorem{Notation}[Lemma]{\bf NOTATION}
\newtheorem{remark}[Lemma]{\bf REMARK}

\newcommand{\restrict}{\mbox{$\mid\hspace{-1.1mm}\grave{}$}}
\newcommand{\covers}{\mbox{$>\hspace{-2.0mm}-{}$}}
\newcommand{\covered}{\mbox{$-\hspace{-2.0mm}<{}$}}
\newcommand{\notcover}{\mbox{$>\hspace{-2.0mm}\not -{}$}}

\newcommand{\boldalpha}{\mbox{\boldmath $\alpha$}}
\newcommand{\boldbeta}{\mbox{\boldmath $\beta$}}
\newcommand{\boldgamma}{\mbox{\boldmath $\gamma$}}
\newcommand{\boldxi}{\mbox{\boldmath $\xi$}}
\newcommand{\boldlambda}{\mbox{\boldmath $\lambda$}}
\newcommand{\boldmu}{\mbox{\boldmath $\mu$}}

\newcommand{\barzero}{\bar{0}}

\newcommand{\sfq}{{\sf q}}
\newcommand{\sfe}{{\sf e}}
\newcommand{\sfk}{{\sf k}}
\newcommand{\sfr}{{\sf r}}
\newcommand{\sfc}{{\sf c}}
\newcommand{\restr}{\negmedspace\upharpoonright\negmedspace}

\title[De Morgan Semi-Heyting Algebras]{A Note on Regular De Morgan Semi-Heyting Algebras}
             
\author{Hanamantagouda P. Sankappanavar}
\keywords{regular De Morgan semi-Heyting algebra of level 1, 
lattice of subvarieties, amalgamation property,
discriminator variety, simple, directly indecomposable, subdirectly
irreducible, equational base.} 
\subjclass[2000]{$Primary:03G25, 06D20, 06D15; \, Secondary:08B26, 08B15$}
\date{\today}

\begin{abstract}
The purpose of this note is two-fold.  Firstly, we prove that the variety $\mathbf{RDMSH_1}$ of regular De Morgan semi-Heyting algebras of level 1 satisfies Stone identity and present (equational) axiomatizations for several subvarieties of $\mathbf{RDMSH_1}$.  Secondly, we give a concrete description of the lattice of subvarieties of the variety $\mathbf{RDQDStSH_1}$ of regular dually quasi-De Morgan Stone semi-Heyting algebras  
that contains $\mathbf{RDMSH_1}$.  Furthermore, we prove that every subvariety of $\mathbf{RDQDStSH_1}$, and hence of $\mathbf{RDMSH_1}$, has Amalgamation Property.  The note concludes with some open problems for further investigation.

\end{abstract}

\maketitle

\thispagestyle{empty}

\section{{\bf Introduction}} \label{SA}
  
Semi-Heyting algebras were introduced by us in \cite{Sa07} as an abstraction of Heyting algebras.  They share several important properties with Heyting algebras, such as distributivity, pseudocomplementedness, and so on.  On the other hand, interestingly, there are also semi-Heyting algebras, which, in some sense, are ``quite opposite'' to Heyting algebras.  For example, the identity $0 \to 1 \approx 0$, as well as the commutative law $x \to y \approx y \to x$, hold in some semi-Heyting algebras.  The subvariety of commutative semi-Heyting algebras was defined in \cite{Sa07} and is  further investigated in \cite{Sa10}.  

Quasi-De Morgan algebras were defined in \cite{Sa87a} as a common abstraction of De Morgan algebras and distributive $p$-algebras.  
In \cite{Sa12}, expanding semi-Heyting algebras  by adding a dual quasi-De Morgan operation, we introduced the variety 
$\mathbf{DQDSH}$ of dually quasi-De Morgan semi-Heyting algebras as a common generalization of De Morgan Heyting algebras (see \cite{Sa87} and \cite{Mo80}) and dually pseudocomplemented Heyting algebras (see \cite{Sa85}) so that we could settle an old conjecture of ours.  

The concept of regularity has played an important role in the theory of pseudocomplemented De Morgan algebras (see \cite{Sa86}).  Recently, in \cite{Sa14} and  \cite{Sa14a}, we inroduced and examined the concept of regularity in the context of  
$\mathbf{DQDSH}$ and gave an explicit description of (twenty five) simple algebras in the (sub)variety $\mathbf{DQDStSH_1}$ of regular dually quasi-De Morgan 
Stone semi-Heyting algebras of level 1.  The work in \cite{Sa14} and  \cite{Sa14a} led us to conjecture that the variety $\mathbf{RDMSH_1}$ of regular De Morgan algebras satisfies Stone identity.

The purpose of this note is two-fold.  Firstly, we prove that the variety $\mathbf{RDMSH_1}$ of regular De Morgan semi-Heyting algebras of level 1 satisfies Stone identity, thus settlieng the above mentioned conjecture affirmatively.  As applications of this result and the main theorem of \cite{Sa14}, we present (equational) axiomatizations for several subvarieties of $\mathbf{RDMSH_1}$.    Secondly, we give a concrete description of the lattice of subvarieties of the variety $\mathbf{RDQDStSH_1}$ of regular dually quasi-De Morgan Stone semi-Heyting algebras, of which $\mathbf{RDMSH_1}$ is a subvariety.  Furthermore, we prove that every subvariety of $\mathbf{RDQDStSH_1}$, and hence of $\mathbf{RDMSH_1}$, has Amalgamation Property.  The note concludes with some open problems for further investigation.

\vspace{1cm}
\section{\bf {Dually Quasi-De Morgan Semi-Heyting Algebras}} \label{SB}

The following definition is taken from \cite{Sa07}. 

An algebra ${\mathbf L}= \langle L, \vee ,\wedge ,\to,0,1 \rangle$
is a {\it semi-Heyting algebra} if \\
 $\langle L,\vee ,\wedge ,0,1 \rangle$ is a bounded lattice and ${\mathbf L}$ satisfies:
\begin{enumerate}
\item[{\rm(SH1)}] $x \wedge (x \to y) \approx x \wedge y$
\item[{\rm(SH2)}] $x \wedge(y  \to z) \approx x \wedge ((x \wedge y) \to (x \wedge z))$
\item[{\rm(SH3)}] $x \to x \approx 1$.
\end{enumerate}
Let ${\mathbf L}$ be a semi-Heyting algebra and, for $x \in {\mathbf L}$, let $x^*:=x \to 0$.  ${\mathbf L}$ is a {\it Heyting algebra} if ${\mathbf L}$ satisfies:
\begin{enumerate}
\item[{\rm(SH4)}] $(x \wedge y) \to y \approx 1$.
\end{enumerate}
${\mathbf L}$ is a {\it commutative semi-Heyting algebra} if ${\mathbf L}$ satisfies:
\begin{enumerate}
\item[{\rm(Co)}] $x  \to y \approx y \to x$.
\end{enumerate}
${\mathbf L}$ is a {\it Boolean semi-Heyting algebra} if ${\mathbf L}$ satisfies:
\begin{enumerate}
\item[{\rm(Bo)}] $x \lor x^{*} \approx 1$.  
\end{enumerate}
${\mathbf L}$ is a {\it Stone semi-Heyting algebra} if ${\mathbf L}$ satisfies:
\begin{enumerate}
\item[{\rm(St)}] $x^* \lor x^{**} \approx 1$.  
\end{enumerate}

Semi-Heyting algebras are distributive and pseudocomplemented,
with $a^*$ as the pseudocomplement of an element $a$.  We will use these
and other properties (see \cite{Sa07}) of semi-Heyting algebras,
frequently without explicit mention, throughout this paper.

The following definition is taken from \cite{Sa12}.
\begin{definition}
An algebra ${\mathbf L}= \langle L, \vee ,\wedge ,\to, ', 0,1
\rangle $ is a {\it semi-Heyting algebra with a dual quasi-De Morgan
operation} or {\it dually quasi-De Morgan semi-Heyting algebra}
\rm{(}$\mathbf {DQDSH}$-algebra, for short\rm{)}  if\\
 $\langle L, \vee ,\wedge ,\to, 0,1 \rangle $ is a semi-Heyting algebra, and
${\mathbf L}$ satisfies:
 \begin{itemize}
    \item[(a)]  $0' \approx 1$ and $1' \approx 0$
     \item[(b)] 
      $(x \land y)' \approx x' \lor y'$
    \item[(c)]
    $(x \lor y)''  \approx x'' \lor y''$     
    \item[(d)]  
   $x'' \leq x$.
\end{itemize}
\noindent 
Let $\mathbf{L} \in \mathbf {DQDSH}$.  Then ${\bf L}$ is a {\it dually Quasi-De Morgan Stone semi-Heyting algebra} {\rm(}$\mathbf{DQDStSH}$-algebra{\rm)} if ${\bf
L}$ satisfies (St). 
 $\mathbf {L}$ is a {\it De Morgan semi-Heyting algebra} or {\it symmetric semi-Heyting algebra} {\rm(}$\mathbf{DMSH}$-algebra{\rm)} if ${\bf
L}$ satisfies:
\begin{itemize}
\item[(DM)]  $x'' \approx x$.
\end{itemize}
$\mathbf{L}$ is a {\it  dually pseudocomplemented semi-Heyting algebra}
{\rm(}$\mathbf {DPCSH}$-algebra if $\mathbf{L}$ satisfies: 
 \begin{itemize}
 \item[ (PC)] $x \lor x' \approx 1$.
\end{itemize}

The varieties of $\mathbf {DQDSH}$-algebras, $\mathbf{DQDStSH}$-algebras, 
 $\mathbf{DMSH}$-algebras  and $\mathbf {DPCSH}$-algebras are denoted, respectively, by $\mathbf {DQDSH}$, $\mathbf{DQDStSH}$, 
 $\mathbf{DMSH}$ and $\mathbf {DPCSH}$.  Furthermore, $\mathbf {DMcmSH}$ denotes the subvariety of $\mathbf {DMSH}$ defined by the commutative identity (Co), and
  $\mathbf {DQDBSH}$ denotes the one defined by (Bo). 
  \end{definition} 
If the underlying semi-Heyting algebra of a $\mathbf{DQDSH}$-algebra
is a Heyting algebra we denote the algebra by $\mathbf{DQDH}$-algebra, and the corresponding variety is denoted by $\mathbf{DQDH}$.

In the sequel, $a'{^*}'$ will be denoted by $a^+$, for $a \in \mathbf{L} \in \mathbf{DQDSH}$. 
The following lemma will often be used without explicit reference to it.  Most of the items in this lemma were proved in \cite{Sa12}, and the others are left to the reader.
\begin{Lemma} \label{2.2}
Let ${\mathbf L} \in \mathbf{DQDSH}$ and let $x,y, z \in L$.  Then
\begin{enumerate}
 \item[{\rm(i)}]  $1'^{*}=1$
\item[{\rm(ii)}] $x \leq y$ implies $x' \geq y'$
\item[{\rm(iii)}] $(x \land y)'^{*}=x'^{*} \land y'^{*}$
\item[{\rm(iv)}] $ x''' = x'$
\item[{\rm(v)}]  $(x \lor y)' = (x'' \lor y'')'$ \label{C.934}
\item[{\rm(vi)}] $(x \lor y)' = (x'' \lor y)'$ \label{12662}
\item[{\rm(vii)}] $x  \leq (x  \lor y) \to x$ \label{B.63}
\item[{\rm(viii)}]  $x \land [(x \lor y) \to z] = x \land z$.
\end{enumerate}
\end{Lemma}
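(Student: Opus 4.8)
The plan is to prove Lemma \ref{2.2} item by item, since each is a short consequence of the axioms (a)--(d) together with the known arithmetic of semi-Heyting algebras (distributivity, pseudocomplementedness, (SH1)--(SH3)). I would first dispose of the dual quasi-De Morgan identities and then turn to the two items (vii) and (viii) that genuinely involve the semi-Heyting implication.

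For (i): since $1' = 0$ by (a), $1'^* = 0^* = 0 \to 0 = 1$ by (SH3). For (ii): if $x \le y$ then $x \land y = x$, so applying $'$ and using (b) gives $x' = (x\land y)' = x' \lor y'$, i.e.\ $y' \le x'$. Item (iii) is $(x\land y)'^* = (x'\lor y')^*$; here one uses that in any pseudocomplemented distributive lattice $(a\lor b)^* = a^*\land b^*$, applied to $a=x'$, $b=y'$, which yields $x'^*\land y'^*$. For (iv): from (d), $x''\le x$ gives $x''' \le x'$ by (ii) (or directly by applying $'$); and applying $'$ to $x''\le x$ directly (using (ii)) gives $x' \le x'''$; hence $x''' = x'$. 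Item (v): by (iv), $(x\lor y)'' $ and $(x''\lor y'')''$ have the same behaviour under a further prime --- more directly, apply $'$ to (c) to get $(x\lor y)''' = (x''\lor y'')'''$, then use (iv) on both sides: $(x\lor y)' = (x''\lor y'')'$. Item (vi): combine (v) with the observation that $(x''\lor y'')' = (x''\lor y)'$; for the latter note $y''\le y$ gives one inequality via (ii), and for the reverse apply (v) with the roles rearranged, or compute $(x''\lor y)'= (x''''\lor y'')' = (x''\lor y'')'$ using $x''''=x''$ (a consequence of (iv) applied to $x'$). These are all one-or-two-line verifications.

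The substantive items are (vii) and (viii). For (viii), $x \land [(x\lor y)\to z] = x\land z$: using (SH2) with the triple $(x, x\lor y, z)$ gives $x\land[(x\lor y)\to z] = x\land[(x\land(x\lor y))\to(x\land z)] = x\land[x \to (x\land z)]$, since $x\land(x\lor y)=x$ by absorption; then (SH1) applied to $x$ and $x\land z$ gives $x\land[x\to(x\land z)] = x\land(x\land z) = x\land z$. Item (vii), $x \le (x\lor y)\to x$: by (SH1), $(x\lor y)\land[(x\lor y)\to x] = (x\lor y)\land x = x$, so $x \le (x\lor y)\to x$ directly. (Alternatively it follows from (viii) with $z:=x$, since $x\land[(x\lor y)\to x] = x\land x = x$.)

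I expect no real obstacle here: the only mild subtlety is remembering that the lattice-theoretic fact $(a\lor b)^*=a^*\land b^*$ and the identities (SH1)--(SH3) are available ``for free'' as properties of the underlying semi-Heyting algebra, and being careful to invoke (ii) (antitonicity of $'$) rather than attempting to manipulate $''$ directly where the De Morgan law (DM) is \emph{not} assumed. The proof is therefore a routine bookkeeping exercise, and I would simply present each item with its one-line justification, relegating, as the authors do, the most trivial ones to the reader.
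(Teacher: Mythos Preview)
Your proposal is correct and supplies precisely the routine verifications that the paper omits: the paper does not prove this lemma in-line at all, but simply states that most items were proved in \cite{Sa12} and leaves the remainder to the reader. Your item-by-item derivations from (SH1)--(SH3), pseudocomplementedness, and axioms (a)--(d) are the expected ones.

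Two small slips worth cleaning up. In (iv), your attributions are reversed: antitonicity (ii) applied to $x''\le x$ yields $x' \le x'''$, not $x''' \le x'$; the inequality $x''' \le x'$ is just (d) with $x'$ substituted for $x$. In (v), applying $'$ to (c) gives $(x\lor y)''' = (x''\lor y'')'$ (a single prime on the right, not a triple), after which a single use of (iv) on the left-hand side finishes the argument. Neither slip affects the overall validity.
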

Next, we describe some examples of $\mathbf {DQDSH}$-algebras by expanding the semi-Heyting algebras given in Figure 1.  These will play a crucial role in the rest of the note.
\vspace{.4cm}  

\setlength{\unitlength}{.9cm} \vspace*{1cm}
\begin{picture}(5,2)

\put(1.7,1){\circle*{.15}}         

\put(1.7,2.3){\circle*{.15}}       

\put(1.3,.9){$0$}               

\put(1.3,2.1){$1$}                 

\put(.2,1.5){$\bf 2$ \ :}

\put(1.7,1){\line(0,1){1.2}}

\put(2.6,1.5){\begin{tabular}{c|cc}
$\to$ \ & \ 0 \ & \ 1  \\ \hline
 0 & 1 & 1  \\ 
 1 & 0 & 1  \\ 
\end{tabular}
}

\put(8.7,1){\circle*{.15}}

\put(8.7,2.3){\circle*{.15}}

\put(8.3,.9){$0$}      

\put(8.3,2.1){$1$}

\put(7.0,1.5){${\bf {\bar{2}}}$ \ :}

\put(8.7,1){\line(0,1){1.2}}

\put(9.5,1.5){\begin{tabular}{c|cc}
 $\to$ \ & \ 0 \ & \ 1 \\ \hline
  0 & 1 & 0 \\
  1 & 0 & 1  \\
  
\end{tabular}
}
\end{picture}\\

\vspace{.2cm}

\setlength{\unitlength}{.9cm} \vspace*{.7cm}       
\begin{picture}(5, 1)                 
\put(1.5, .2){\circle*{.15}}          

\put(1.5,1.0){\circle*{.15}}

\put(1.5,1.8){\circle*{.15}}

\put(1.1, .1){$0$}                      

\put(1.1,.9){$a$}

\put(1.1,1.7){$1$}

\put(.1,.7){${\bf L}_1$ \ :}

\put(1.5, .2){\line(0,1){1.5}}

\put(2.2,1){\begin{tabular}{c|ccc}
 $\to$ \ & \ 0 \ & \ $a$ \ & \ 1 \\ \hline
  0 & 1 & 1 & 1 \\
  $a$ & 0 & 1 & 1 \\
  1 & 0 & $a$ & 1
\end{tabular}
}

\put(8.2, .3){\circle*{.15}}                           

\put(8.2,1.1){\circle*{.15}}

\put(8.2, 1.8){\circle*{.15}}

\put(7.8, .2){$0$}                                     

\put(7.8,.9){$a$}

\put(7.8,2.1){$1$}

\put(6.7,.7){${\bf L}_2$ \ :}                        

\put(8.2, .3){\line(0,1){1.5}}                        

\put(8.9,1){\begin{tabular}{c|ccc}                     
 $\to$ \ & \ 0 \ & \ $a$ \ & \ 1 \\ \hline
  0 & 1 & $a$ & 1 \\
  $a$ & 0 & 1 & 1 \\
  1 & 0 & $a$ & 1
\end{tabular}
}
\end{picture}\\

\vspace{1cm}
\setlength{\unitlength}{.9cm} \vspace*{.7cm}   
\begin{picture}(7,.3)

\put(1.5, .2){\circle*{.15}}          

\put(1.5,1.0){\circle*{.15}}

\put(1.5,1.8){\circle*{.15}}

\put(1.1, .1){$0$}                      

\put(1.1,.9){$a$}

\put(1.1,1.7){$1$}

\put(.1,.7){${\bf L}_3$ \ :}

\put(1.5, .2){\line(0,1){1.5}}

\put(2.2,1){\begin{tabular}{c|ccc}
 $\to$ \ & \ 0 \ & \ $a$ \ & \ 1 \\ \hline
  0 & 1 & 1 & 1 \\
  $a$ & 0 & 1 & $a$ \\
  1 & 0 & $a$ & 1
\end{tabular}
}

\put(8.2, .3){\circle*{.15}}                           

\put(8.2,1.1){\circle*{.15}}

\put(8.2, 1.8){\circle*{.15}}

\put(7.8, .2){$0$}                                     

\put(7.8,.9){$a$}

\put(7.8,2.1){$1$}

\put(6.7,.7){${\bf L}_4$ \ :}                        

\put(8.2, .3){\line(0,1){1.5}}

\put(8.9,1){\begin{tabular}{c|ccc}
 $\to$ \ & \ 0 \ & \ $a$ \ & \ 1 \\ \hline
  0 & 1 & $a$ & 1 \\
  $a$ & 0 & 1 & $a$ \\
  1 & 0 & $a$ & 1
\end{tabular}
}
\end{picture} \\

\setlength{\unitlength}{.9cm} \vspace*{.7cm}
\begin{picture}(7, 1.7)

\put(1.5, .2){\circle*{.15}}          

\put(1.5,1.0){\circle*{.15}}

\put(1.5,1.8){\circle*{.15}}

\put(1.1, .1){$0$}                      

\put(1.1,.9){$a$}

\put(1.1,1.7){$1$}

\put(.1,.7){${\bf L}_5$ \ :}

\put(1.5, .2){\line(0,1){1.5}}

\put(2.2,1){\begin{tabular}{c|ccc}
 $\to$ \ & \ 0 \ & \ $a$ \ & \ 1 \\ \hline
  0 & 1 & $a$ & $a$ \\
  $a$ & 0 & 1 & 1 \\
  1 & 0 & $a$ & 1
\end{tabular}
}


\put(8.2, .3){\circle*{.15}}                           

\put(8.2,1.1){\circle*{.15}}

\put(8.2, 1.8){\circle*{.15}}

\put(7.8, .2){$0$}                                     

\put(7.8,.9){$a$}

\put(7.8,2.1){$1$}

\put(6.7,.7){${\bf L}_6$ \ :}                        

\put(8.2, .3){\line(0,1){1.5}}

\put(8.9,1){\begin{tabular}{c|ccc}
 $\to$ \ & \ 0 \ & \ $a$ \ & \ 1 \\ \hline
  0 & 1 & 1 & $a$ \\
  $a$ & 0 & 1 & 1 \\
  1 & 0 & $a$ & 1
\end{tabular}
}
\end{picture}\\
\ \\ \ \\ \ \\ 

\setlength{\unitlength}{.9cm} \vspace*{.7cm}
\begin{picture}(7,0.3)

\put(1.5, .2){\circle*{.15}}          

\put(1.5,1.0){\circle*{.15}}

\put(1.5,1.8){\circle*{.15}}

\put(1.1, .1){$0$}                      

\put(1.1,.9){$a$}

\put(1.1,1.7){$1$}

\put(.1,.7){${\bf L}_7$ \ :}

\put(1.5, .2){\line(0,1){1.5}}

\put(2.2,1){\begin{tabular}{c|ccc}
 $\to$ \ & \ 0 \ & \ $a$ \ & \ 1 \\ \hline
  0 & 1 & $a$ & $a$ \\
  $a$ & 0 & 1 & $a$ \\
  1 & 0 & $a$ & 1
\end{tabular}
}


\put(8.2, .3){\circle*{.15}}                           

\put(8.2,1.1){\circle*{.15}}

\put(8.2, 1.8){\circle*{.15}}

\put(7.8, .2){$0$}                                     

\put(7.8,.9){$a$}

\put(7.8,2.1){$1$}

\put(6.7,.7){${\bf L}_8$ \ :}                        

\put(8.2, .3){\line(0,1){1.5}}

\put(8.9,1){\begin{tabular}{c|ccc}
 $\to$ \ & \ 0 \ & \ $a$ \ & \ 1 \\ \hline
  0 & 1 & 1 & $a$ \\
  $a$ & 0 & 1 & $a$ \\
  1 & 0 & $a$ & 1
\end{tabular}
}
\end{picture}\\
\  \\ \ \\
\medskip
\setlength{\unitlength}{.9cm} \vspace*{.7cm}    
\begin{picture}(7,2.0)

\put(1.5, .2){\circle*{.15}}          

\put(1.5,1.0){\circle*{.15}}

\put(1.5,1.8){\circle*{.15}}

\put(1.1, .1){$0$}                      

\put(1.1,.9){$a$}

\put(1.1,1.7){$1$}

\put(.1,.7){${\bf L}_9$ \ :}

\put(1.5, .2){\line(0,1){1.5}}

\put(2.2,1){\begin{tabular}{c|ccc}
 $\to$ \ & \ 0 \ & \ $a$ \ & \ 1 \\ \hline
  0 & 1 & 0 & 0 \\
  $a$ & 0 & 1 & 1 \\
  1 & 0 & $a$ & 1
\end{tabular}
}

\put(8.2, .3){\circle*{.15}}                           

\put(8.2,1.1){\circle*{.15}}

\put(8.2, 1.8){\circle*{.15}}

\put(7.8, .2){$0$}                                     

\put(7.8,.9){$a$}

\put(7.8,2.1){$1$}

\put(6.7,.7){${\bf L}_{10}$ \ :}                        

\put(8.2, .3){\line(0,1){1.5}}      

\put(8.9,1){\begin{tabular}{c|ccc}
 $\to$ \ & \ 0 \ & \ $a$ \ & \ 1 \\ \hline
  0 & 1 & 0 & 0 \\
  $a$ & 0 & 1 & $a$ \\
  1 & 0 & $a$ & 1
\end{tabular}
}

\end{picture}\\
\setlength{\unitlength}{.9cm} \vspace*{.3cm}
\begin{picture}(7,4)

\put(.4,2.5){$\bf D_1$ \ :}

\put(7.0,2.5){$\bf D_2$ \ :}

\put(1.5,2.4){\begin{tabular}{c|cccc}                          
 $\to$ \ & \ 0 \ & \ $1$ \ & \ $a$\ &\ $b$ \\ \hline
  0 & 1 & 0 & $b$ & $a$ \\
  $1$ & 0 & 1 & $a$ & $b$ \\
  $a$ & $b$ & $a$ & 1 &0\\
  $b$ & $a$ & $b$ & 0 & 1\\
\end{tabular}
}
\put(8.0,2.4){\begin{tabular}{c|cccc}
 $\to$ \ & \ 0 \ & \ $1$ \ & \ $a$\ &\ $b$ \\ \hline
  0 & 1 & 1 & 1 & 1 \\
  $1$ & 0 & 1 & $a$ & $b$ \\
  a & $b$ & $1$ & 1 & $b$ \\
  $b$ & $a$ & 1 & $a$ & 1\\
\end{tabular}
}
                          
 \end{picture}  
\setlength{\unitlength}{.9cm} \vspace*{.3cm}    

 \begin{picture}(7, 3) \\

 \put(.4,2.0){$\bf D_3$ \ :}                   
                            
\put(2.2,2.4){\begin{tabular}{c|cccc}                  
 $\to$ \ & \ 0 \ & \ $1$ \ & \ $a$\ &\ $b$ \\ \hline
  0 & 1 & $a$ & 1 & $a$ \\
  $1$ & 0 & 1 & $a$ & $b$ \\
  $a$ & $b$ & $a$ & 1 &0\\
  $b$ & $a$ & 1 & $a$ & 1\\
\end{tabular}
}
\vspace{.5cm}
\put(6,0.2){Figure 1}    
\end{picture}

\vspace{.3cm}

Let $\mathbf {2^e}$ and $\mathbf{\bar{2}^e}$ be the expansions of the semi-Heyting algebras $\mathbf {2}$ and $\mathbf{\bar{2}}$ (shown in Figure 1) by adding the unary operation $'$ such
that $0'=1$, $1'=0$.\par

Let $\mathbf{L}^{dp}_i$, $i=1, \ldots,10$, denote the expansion of the semi-Heyting algebra
$\mathbf{L}_i$ (shown in Figure 1) by adding the unary operation $'$ such that $0'=1$, $1'=0$, and $a'=1$.\par  

Let $\mathbf{L}^{dm}_i$, $i=1, \ldots, 10$,
 denote the expansion of ${\bf L}_i$ (in Figure 1) by adding the unary operation $'$ such that $0'=1$, $1'=0$, and $a'=a$. 

We Let $\mathbf{C^{dp}_{10}} := \{\mathbf{L}^{dp}_i: i =1, \ldots,10 \}$ and 
 $\mathbf{C^{dm}_{10}} := \{\mathbf{L}^{dm}_i: i =1, \ldots,10 \}$.   We also let 
$ \mathbf{C_{20}} := \mathbf{C^{dm}_{10}} \cup \mathbf{C^{dp}_{10}}$.        

Each of the three $4$-element 
algebras $\mathbf{D_1}$, $\mathbf{D_2}$ and
$\mathbf{D_3}$ 
has its lattice reduct as the Boolean lattice with the universe $\{0,a,b,1\}$, $b$ being the complement of
$a$, has the operation $\to$ as defined in Figure 1,  
 and has the unary operation $'$ defined as follows: $a' = a$, $b' = b$, $0'=1$, $1'=0$.  For the variety $\mathbf{V(D_1, D_2, D_3)}$ generated by $\mathbf{\{D_1, D_2, D_3 \}}$, it was shown in \cite{Sa12} that $\mathbf{V(D_1, D_2, D_3)} = \mathbf{DQDBSH}$. 

The following is a special case of Definition 5.5 in \cite{Sa12}.  Let $x'{^*}'{^*} := x{^{2(}}{'^{*)}}$.  Note that $x{^{2(}}{'^{*)}} \leq x$ in a 
$\mathbf{DMSH}$-algebra.  
\vspace{-.46cm} 
                                                   
\begin{definition}\label{5.5}
The subvariety $\mathbf {DMSH_1}$ of level 1
of $\mathbf {DMSH}$ is defined by the identity:
$x \land x'^* \land x^{2{\rm(}}{'^{*{\rm{)}}}} \approx x \land x'^*,$ or equivalently, by the identity:
\begin{itemize}
\item[\rm (L1)]  $(x \land x'^*)'^* \approx x \land x'^*$.  
\end{itemize}
\end{definition} 
It follows from \cite{Sa12} that  
the variety 
$\mathbf {DMSH_1}$, is a discriminator variety. 
We note here that the algebras described above in Figure 1 
are actually in $\mathbf {DMSH_1}$.

\vspace{1cm}

\section{Regular De Morgan Semi-Heyting algebras of level 1}

Recall that $a^+:= a'{^*}'$ in $\mathbf{L} \in \mathbf{DMSH_1}$. 
\begin{definition}
Let $\mathbf{L} \in \mathbf{DMSH_1}$.  Then $\mathbf{L}$  {\it is regular}  
 if $\mathbf{L}$ satisfies the following identity:
\begin{itemize}
\item[(R)]  $ x \land x^+ \leq y \lor y^*$.
\end{itemize}
The variety of regular $\mathbf{DMSH_1}$-algebras will be denoted by 
$\mathbf{RDMSH_1}$.  
\end{definition}

In the rest of this section, $\mathbf{L}$ denotes an $\mathbf{RDMSH_1}$-algebra and $x,y \in L$.  The following lemmas lead us to prove that $\mathbf{RDMSH_1}$ satisfies (St). 

\begin{Lemma} \label{regB}

$(x \lor x{^*}')^* =  x' \land x^*$.  
\end{Lemma}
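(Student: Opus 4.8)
The plan is to verify the identity $(x \lor x^{*}{}')^{*} = x' \land x^{*}$ by sandwiching both sides between each other using the basic arithmetic of semi-Heyting algebras together with the De Morgan laws (a)–(d) from the definition of $\mathbf{DQDSH}$ and the level-1 identity (L1). First I would compute the right-hand side as a pseudocomplement: since $\mathbf{L}$ is pseudocomplemented with $z^{*}$ the pseudocomplement of $z$, and pseudocomplementation turns joins into meets, we have $(x \lor x^{*}{}')^{*} = x^{*} \land (x^{*}{}')^{*}$. So it suffices to show $x^{*} \land (x^{*}{}')^{*} = x^{*} \land x'$; then the claim follows. In fact, I would aim to prove the sharper statement $x^{*} \land (x^{*}{}')^{*} = x^{*} \land x'$ by showing $x^{*} \land (x^{*}{}')^{*} \le x'$ and $x^{*} \land x' \le (x^{*}{}')^{*}$, each combined with the common factor $x^{*}$.

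For the inclusion $x^{*} \land x' \le (x^{*}{}')^{*}$: it is enough to check that $x^{*} \land x' \land x^{*}{}' = 0$. Here $x^{*} \land x^{*}{}' $ is the meet of $x^*$ with its dual-quasi-De Morgan image; using $0' \approx 1$, $1' \approx 0$, the De Morgan law $(u \land v)' \approx u' \lor v'$, and $u'' \le u$, one can push $x^*{}'$ around — for instance $x^{*}{}' \le x^{**}{}''$? — more directly, I expect $x^{*} \land x^{*}{}'$ to simplify using that in a semi-Heyting algebra $a \land a^{*} = 0$, together with the identity $x^{*}{}' \le$ something comparable to $x^{**}$. The cleaner route: since $x^{*} \le x^{*}$, apply $'$ (order-reversing by Lemma~\ref{2.2}(ii)) to get $x^{*}{}' \ge x^{***}{}'' = \dots$; I'd rather just use $x^{*}{}' \land x^{*} \le x^{*}{}' \land x^{**}{}' {}' $ and collapse via (c)/(d). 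The point is that these are all one- or two-line manipulations once the right intermediate term is guessed.

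For the reverse inclusion $x^{*} \land (x^{*}{}')^{*} \le x'$: rewrite $(x^{*}{}')^{*} = x^{*}{}' \to 0$. Using (SH1) and (SH2) I would compute $x^{*} \land (x^{*}{}')^{*}$ relative to the element $x^{*}$, reducing the $\to$ inside a common meet with $x^{*}$; this is exactly the kind of computation Lemma~\ref{2.2}(viii) and (SH1) are designed for. Then I invoke (L1) — in the form $(x \land x'^{*})'^{*} = x \land x'^{*}$, or rather its instance with $x$ replaced appropriately — to control the iterated $'{}^{*}$ that appears, pinning the term down to lie below $x'$.

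The main obstacle I anticipate is the middle step: correctly identifying the intermediate expression that links $x^{*} \land (x^{*}{}')^{*}$ to $x'$ and justifying it with (L1) rather than with the full De Morgan identity (DM) $x'' \approx x$ — since this lemma is stated in the $\mathbf{DMSH}$ (indeed $\mathbf{RDMSH_1}$) context, using $x'' \approx x$ freely is legitimate and should substantially shorten the calculation. With (DM) available, $x^{*}{}'$ and $x^{*}{}'{}^{*}$ behave almost like a De Morgan complement composed with a pseudocomplement, and the whole identity reduces to standard De Morgan-algebra bookkeeping plus one appeal to (L1) to handle the interaction of $'$ and $^{*}$; the only real care needed is to keep track of which manipulations require (L1) versus which hold in all of $\mathbf{DMSH}$, so that the lemma is stated at the correct level of generality.
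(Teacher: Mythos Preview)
Your opening reduction $(x \lor x^{*}{}')^{*} = x^{*} \land (x^{*}{}')^{*}$ is fine, but from there the proposal does not actually close: neither of the two inequalities you set up is proved, and you say so yourself (``once the right intermediate term is guessed'', ``I expect \dots'', etc.). In particular, for $x^{*}\land x' \land x^{*}{}'=0$ you never settle on an argument, and for $x^{*}\land (x^{*}{}')^{*}\le x'$ the sketch via (SH1)/(SH2) and Lemma~\ref{2.2}(viii) does not lead anywhere obvious. So as written this is a plan, not a proof.

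The paper's argument avoids the split into two inequalities entirely and is a four-line computation starting from the \emph{right-hand side}. The single idea you are circling around but never pin down is: apply (L1) with $x'$ substituted for $x$. Since $x''=x$ by (DM), one has
\[
x' \land x^{*} \;=\; x' \land (x')'^{*} \;\overset{\text{(L1)}}{=}\; \bigl(x' \land (x')'^{*}\bigr)'^{*}
\;=\; \bigl(x'' \lor x''^{*}{}'\bigr)^{*} \;=\; (x \lor x^{*}{}')^{*},
\]
the penultimate equality using the De Morgan law $(u\land v)'=u'\lor v'$. No pseudocomplement-of-join formula, no sandwich of inequalities, no appeal to (SH1)/(SH2) or Lemma~\ref{2.2}(viii) is needed. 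If you insist on your route, note that the inequality $x'\land x^{*}\land x^{*}{}'=0$ essentially \emph{requires} this same (L1)-with-$x'$ step (since $x'\land x^{*}=(x\lor x^{*}{}')^{*}$ immediately gives $(x'\land x^{*})\land x^{*}{}'\le (x\lor x^{*}{}')^{*}\land(x\lor x^{*}{}')=0$), so your detour passes through the paper's key computation anyway.
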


\begin{proof}
\begin{eqnarray*}
x' \land x^* &=& x' \land x''^*\\
                       &=& (x' \land x'{'^*})'^*  \quad \text{ by (L1)}\\
                       &= & (x'' \lor x''{^*}')^* \\
                       &=&  (x \lor x{^*}')^*, \quad \text{ since $x''=x$}. 
\end{eqnarray*}                       
\end{proof}

\begin{Lemma} \label{regC}
 $x \lor x^* \lor x{^*}' = 1$.
\end{Lemma}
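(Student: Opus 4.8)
The plan is to deduce the identity from Lemma~\ref{regB} by passing to De Morgan duals. Set $z := x \lor x^* \lor x{^*}'$. Since the De Morgan operation is an involution ($x''=x$, from (DM)) that interchanges $\lor$ and $\land$, and since $0'=1$, $1'=0$, we have $z=1$ if and only if $z'=0$ (indeed $z=z''=(z')'$). So it suffices to prove $z'=0$.

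First I would compute $z'$ by De Morgan's laws: using that $'$ sends joins to meets, that $(x^*)'$ is by definition $x{^*}'$, and that $(x{^*}')'=x^*$ by involution, one gets
\[
z' \;=\; (x \lor x^* \lor x{^*}')' \;=\; x' \land x^* \land x{^*}'.
\]
Now comes the key step: by Lemma~\ref{regB}, $x' \land x^* = (x \lor x{^*}')^*$, so
\[
z' \;=\; (x \lor x{^*}')^* \land x{^*}'.
\]
Since $x{^*}' \le x \lor x{^*}'$ and $a^*$ is the pseudocomplement of $a$, we have $x{^*}' \land (x \lor x{^*}')^* \le (x \lor x{^*}') \land (x \lor x{^*}')^* = 0$. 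Hence $z'=0$, and therefore $z=1$, as required.

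I do not expect a serious obstacle here: once Lemma~\ref{regB} is available, the only real idea is to recognize, after dualizing $z$, that the leftover factor $x{^*}'$ in $z'=x'\land x^*\land x{^*}'$ is precisely a join-summand of the term $x\lor x{^*}'$ whose pseudocomplement Lemma~\ref{regB} identifies with $x'\land x^*$; after that the computation is one line. The only place to be careful is the De Morgan evaluation of $z'$ — in particular keeping track that $(x^*)'=x{^*}'$ while $(x{^*}')'=x^*$ — since that is where a notational slip with the two distinct negations would most likely occur. (Note that this argument uses only Lemma~\ref{regB} and the De Morgan laws, not the regularity identity (R) directly, regularity having already been used to establish Lemma~\ref{regB}.)
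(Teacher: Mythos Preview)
Your argument is correct and is essentially the paper's own proof: the paper too dualizes via (DM) to reach $x{^*}' \land x' \land x^*$, applies Lemma~\ref{regB} to rewrite $x'\land x^*$ as $(x\lor x{^*}')^*$, and then kills the resulting meet. The only cosmetic difference is that where you invoke the pseudocomplement inequality $x{^*}' \land (x\lor x{^*}')^* \le (x\lor x{^*}')\land(x\lor x{^*}')^* = 0$, the paper cites Lemma~\ref{2.2}(viii) (with $z=0$) for the same conclusion.
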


\begin{proof}
\begin{eqnarray*}
 x \lor x^* \lor x{^*}'  &=& (x{^*}' \land x' \land x^*)'  \quad \text{ by (DM)}\\
                                    &=& [x{^*}' \land (x \lor x{^*}')^*]' \quad \text{ by Lemma \ref{regB}}\\
                                    &=& (x{^*}' \land 0)' \quad \text{ by Lemma \ref{2.2}(viii)}\\
                                     &=& 0'\\
                                     &=& 1.
\end{eqnarray*}    
\end{proof}

\begin{Lemma} \label{regE}  We have
\begin{itemize}
\item[ ]  $x \land (x^+ \lor y \lor y^*) = x \land (y \lor y^*)$. 
\end{itemize}
\end{Lemma}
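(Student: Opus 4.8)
The identity to establish is $x \land (x^+ \lor y \lor y^*) = x \land (y \lor y^*)$, where $x^+ = x'{^*}'$. The inequality $x \land (y \lor y^*) \leq x \land (x^+ \lor y \lor y^*)$ is trivial by monotonicity, so the real content is the reverse inequality, i.e. $x \land x^+ \land (\text{something}) \leq x \land (y \lor y^*)$ after distributing. My plan is to distribute the meet over the join on the left and observe that $x \land (x^+ \lor y \lor y^*) = (x \land x^+) \lor (x \land (y \lor y^*))$; since $\mathbf{L}$ is distributive, it then suffices to show that $x \land x^+ \leq x \land (y \lor y^*)$, and because $x \land x^+ \leq x$ automatically, this reduces to $x \land x^+ \leq y \lor y^*$ — which is precisely the regularity identity (R). So the lemma should follow in a few lines directly from distributivity and (R).

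Concretely, the argument I would write is:
\begin{eqnarray*}
x \land (x^+ \lor y \lor y^*) &=& (x \land x^+) \lor (x \land (y \lor y^*)) \quad \text{by distributivity}\\
                              &=& (x \land x \land x^+) \lor (x \land (y \lor y^*))\\
                              &=& (x \land (x \land x^+)) \lor (x \land (y \lor y^*))\\
                              &\leq& (x \land (y \lor y^*)) \lor (x \land (y \lor y^*)) \quad \text{by (R)}\\
                              &=& x \land (y \lor y^*),
\end{eqnarray*}
and the reverse inequality $x \land (y \lor y^*) \leq x \land (x^+ \lor y \lor y^*)$ is immediate, giving equality.

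I do not anticipate a genuine obstacle here: the only subtlety is making sure that the step using (R) is applied under the outer meet with $x$, i.e. using that $a \leq b$ implies $x \land a \leq x \land b$, together with the fact that $x \land x^+ = x \land (x \land x^+)$ so that $x \land x^+ \leq x \land (y \lor y^*)$ follows from $x \land x^+ \leq y \lor y^*$. Everything else is routine lattice manipulation valid in any distributive lattice, and distributivity of the underlying lattice of a semi-Heyting algebra has already been recorded in the excerpt. This lemma is evidently a stepping stone — combined with Lemmas \ref{regB} and \ref{regC} it is presumably aimed at deriving the Stone identity (St) for $\mathbf{RDMSH_1}$ — so I would keep the proof as short as the display above.
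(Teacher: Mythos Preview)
Your proof is correct and uses essentially the same ingredients as the paper --- distributivity together with the regularity identity (R). The paper arranges the computation as a three-line chain of equalities starting from $x \land (y \lor y^*)$ (first absorbing $x \land x^+$ into $y \lor y^*$ via (R), then distributing and undistributing), whereas you prove the two inequalities separately; this is only a cosmetic difference.
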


\begin{proof}
\begin{eqnarray*}
x \land (y \lor y^*) &=& x \land [(x \land x^+) \lor (y \lor y^*)] \quad \text{ by (R)}\\
                                &=& (x \land x^+) \lor [x \land (y \lor y^*)] \\
                               & =& x \land [x^+ \lor y \lor y^*].
\end{eqnarray*}
\end{proof}

\begin{Lemma}\label{regF}  Let $x \neq 1$.  Then
  $x \leq x'$. 
\end{Lemma}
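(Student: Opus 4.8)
The plan is to exploit Lemma~\ref{regC}, namely $x \lor x^* \lor x^{*}{}' = 1$, together with the hypothesis $x \neq 1$, in order to conclude $x \leq x'$, i.e.\ $x \wedge x' = x$. First I would apply the dual De Morgan law (b) and (DM) to rewrite a suitable conjunction. Observe that from $x \lor x^* \lor x^{*}{}' = 1$ we get, taking $'$ of both sides and using (DM) together with (b), that $x' \wedge x^{*}{}' \wedge x^{**} = 0$; equivalently $x' \wedge x^{*}{}' \wedge x^{**} = 0$. The idea is to meet this identity, or a consequence of it, with $x$ and simplify. Recall the general semi-Heyting facts that $x \wedge x^* = 0$ and $x \leq x^{**}$, so that $x \wedge x^{**} = x$.

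The key computation I would carry out is the following. Starting from $x \wedge x' $, I would like to show it equals $x$. Using (SH1)-type manipulations and the pseudocomplement identities, the natural route is: $x = x \wedge x^{**} = x \wedge (x^* \lor x^{**})$ once we know $x^* \lor x^{**} \ge x$ — but more directly, from Lemma~\ref{regC} we can meet with $x^*$: since $x \wedge x^* = 0$, meeting $x \lor x^* \lor x^{*}{}' = 1$ with $x^*$ gives $x^* \lor (x^* \wedge x^{*}{}') = x^*$, which is trivial. So instead the useful meet is with something that kills the middle term. The cleaner approach: apply $'$ to Lemma~\ref{regC} to obtain $(x \lor x^* \lor x^{*}{}')' = 0$, hence by (b) and (DM), $x' \wedge x^{**} \wedge x^{**}{}'{}' = 0$, i.e.\ $x' \wedge x^{**} \wedge x^{**} = x' \wedge x^{**} = 0$ using $x^{**}{}'{}' = x^{**}$ from (DM). Wait — $(x^*{}')' = x^{**}$, and then $(x^{**})$ appears, so we actually get $x' \wedge x^{**} \wedge x^{*{}''} $; with (DM), $x^{*{}''} = x^*$, giving $x' \wedge x^{**} \wedge x^* = x' \wedge 0 = 0$, which is vacuous. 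This tells me the middle term must be handled by the hypothesis $x \neq 1$, presumably because in a subdirectly irreducible (or via a congruence argument) $x \neq 1$ forces $x^* \ne 0$ to be usable, or because one first reduces to showing the contrapositive at the level of elements.

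The honest plan, then, is: meet Lemma~\ref{regC} with $x'$ to get $x' = x' \wedge (x \lor x^* \lor x^{*}{}') = (x' \wedge x) \lor (x' \wedge x^*) \lor (x' \wedge x^{*}{}')$ by distributivity. Now $x' \wedge x^* \le x'$ and $x' \wedge x^{*}{}' \le x'$ automatically, so this just says $x' = x'$; no gain. The gain must come from pushing $x$ up: compute $x \lor x'$ instead. By (DM), $x \lor x' = (x' \wedge x'')' = (x' \wedge x)'$, so $x \le x'$ iff $x' \wedge x = x$ iff $(x' \wedge x)' = x'$ iff $x \lor x' = x'$, hmm that is iff $x \le x'$ again. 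So the real content is to produce, from $x \ne 1$, the inequality $x^* \ge x$ or equivalently that $x$ lies below its own pseudocomplement-related element; more plausibly the intended argument shows $x \wedge x^* = x \wedge x'$ type identity fails unless $x \le x'$. I expect the main obstacle to be precisely this: extracting genuine information from the strict inequality $x \neq 1$. The cleanest device is a congruence-theoretic one — $\mathbf{DMSH_1}$ is a discriminator variety, so it suffices to verify the claim on subdirectly irreducible (indeed simple) algebras, where $x \ne 1$ can be leveraged via the ternary discriminator. I would therefore structure the proof as: (1) reduce to $\mathbf{L}$ simple using that $\mathbf{RDMSH_1} \subseteq \mathbf{DMSH_1}$ is a discriminator variety; (2) on a simple algebra, use Lemma~\ref{regC} and the regularity identity (R) with the fact that for $x \ne 1$ one has $x^+ \vee x = 1$ or a comparable collapse; (3) conclude $x \wedge x' = x$ by a short distributive computation. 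The delicate point — and the step I would expect to write most carefully — is justifying step (2), i.e.\ why $x \ne 1$ together with (R) and simplicity forces enough of the Boolean skeleton to collapse so that $x$ sits below $x'$.
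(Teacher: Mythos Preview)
Your proposal never arrives at an actual argument: the first two-thirds is a sequence of manipulations that you yourself recognize as ``trivial'' or ``no gain,'' and the final plan (reduce to simples, then use regularity somehow) is left unexecuted, with step~(2) flagged as ``delicate'' but never carried out. As it stands, this is not a proof.

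The paper's proof rests on three concrete moves, none of which appear in your proposal. First, from $x\neq 1$ and the level-$1$ identity (L1) one obtains $x\land x'^{*}=0$; this is precisely where the hypothesis $x\neq 1$ is consumed, and it is the step you were searching for. (Your instinct that one may implicitly be working in a simple algebra is sound, but the point is that (L1), not a generic discriminator argument, is what converts $x\neq 1$ into the equation $x\land x'^{*}=0$.) Second, Lemma~\ref{regC} is applied to $x'$, not to $x$: it yields $x'\lor x'^{*}\lor x'^{*\prime}=1$, i.e.\ $x'\lor x'^{*}\lor x^{+}=1$. All of your attempts used Lemma~\ref{regC} with $x$ itself, which is why the meets and De~Morgan duals you computed collapsed to trivialities. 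Third, regularity enters through Lemma~\ref{regE} with $y=x'$, giving $x\land(x'\lor x'^{*}) = x\land(x^{+}\lor x'\lor x'^{*})$. Chaining these,
\[
x\land x' \;=\; (x\land x')\lor(x\land x'^{*}) \;=\; x\land(x'\lor x'^{*}) \;=\; x\land(x^{+}\lor x'\lor x'^{*}) \;=\; x\land 1 \;=\; x,
\]
so $x\le x'$. The missing idea, then, is not a congruence-theoretic reduction but the purely computational triple: use (L1) to kill $x\land x'^{*}$, instantiate Lemma~\ref{regC} at $x'$, and invoke Lemma~\ref{regE} rather than the raw identity~(R).
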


\begin{proof}
Since $x \neq 1$, we have $x \land x'^*=0$ by (L1).  So, 
\begin{eqnarray*}
x \land x'   &=&(x \land x')\lor (x\land x'^*) \\
                   &=& x \land (x' \lor x'^*) \\   
                    &= & x \land (x^+ \lor x' \lor x'^*) \quad \text{by Lemma \ref{regE}}\\
                   & = & x \land 1 \quad \text{ by Lemma \ref{regC}} 
\end{eqnarray*}
 So, $x \leq x'$.
\end{proof}

\begin{Lemma} \label{regG} Let $x^* \neq 0$.  Then
 $x \lor x{^*} =1$. 
\end{Lemma}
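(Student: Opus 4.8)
The plan is to reduce to the main case $x^* \notin \{0,1\}$ and there establish the pointwise identity $x{^*}' = x^*$; once this holds, Lemma~\ref{regC} --- which asserts $x \lor x^* \lor x{^*}' = 1$ --- collapses at once to $x \lor x^* = 1$.

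First I would dispose of the case $x^* = 1$, where $x \lor x^* = 1$ trivially, so that from now on $x^* \notin\{0,1\}$. This also forces $x^{**} \neq 1$, since $x^{**} = 1$ would give $x^* = x^{***} = 1^* = 0$. Now I would apply Lemma~\ref{regF} twice: to $x^*$ (legitimate since $x^* \neq 1$) to obtain $x^* \leq x{^*}'$, and to $x^{**}$ (legitimate since $x^{**} \neq 1$) to obtain $x^{**} \leq x{^{**}}'$. From the second inequality $x^* \lor x^{**} \leq x^* \lor x{^{**}}'$, and since $*$ reverses order, $(x^* \lor x{^{**}}')^* \leq (x^* \lor x^{**})^* = x^{**} \land x^{***} = x^{**} \land x^* = 0$, the key point being that $x^* \lor x^{**}$ is dense. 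But Lemma~\ref{regB} with $x$ replaced by $x^*$ reads $(x^* \lor x{^{**}}')^* = x{^*}' \land x^{**}$, so $x{^*}' \land x^{**} = 0$, i.e.\ $x{^*}' \leq x^{***} = x^*$. Combined with $x^* \leq x{^*}'$ this gives $x{^*}' = x^*$, and Lemma~\ref{regC} then finishes the proof.

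The main obstacle is locating this route: the obvious combination of Lemmas~\ref{regF} and~\ref{regC} yields only the weaker $x \lor x{^*}' = 1$, and one has to see that the remaining gap is exactly the claim $x{^*}' = x^*$ for $x^* \notin\{0,1\}$. The decisive trick for that is to invoke Lemma~\ref{regB} at the element $x^*$ rather than at $x$, and to use density of $x^* \lor x^{**}$ to kill the pseudocomplement on the left-hand side; the rest is bookkeeping with the standard pseudocomplement laws $a^{***} = a^*$ and $a \land a^* = 0$ and the order-reversal of $*$.
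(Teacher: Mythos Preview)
Your proof is correct, and like the paper you reduce to the inequality $x{^*}' \leq x^*$ and then finish with Lemma~\ref{regC}; the difference is only in how that inequality is obtained. The paper's route is a single line: since $x^* \neq 0$ and $'$ is an involution by (DM), one has $x{^*}' \neq 1$, so Lemma~\ref{regF} applied \emph{to $x{^*}'$} gives $x{^*}' \leq (x{^*}')' = x{^*}'' = x^*$, again by (DM). No case split on $x^* = 1$, no use of Lemma~\ref{regB}, no pseudocomplement calculations.

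Your longer path is self-imposed: you feed $x^*$ and $x^{**}$ into Lemma~\ref{regF}, which produces the wrong-direction inequality $x^* \leq x{^*}'$ and then obliges you to manufacture the reverse via Lemma~\ref{regB} and the density of $x^* \lor x^{**}$. The ``main obstacle'' you describe --- that the obvious combination yields only $x \lor x{^*}' = 1$ --- vanishes once you apply Lemma~\ref{regF} to $x{^*}'$ instead of to $x^*$. As a minor dividend, your argument does establish the sharper pointwise fact $x{^*}' = x^*$ whenever $x^* \notin \{0,1\}$, whereas the paper only needs and proves $x{^*}' \leq x^*$.
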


\begin{proof}
Since $x^* \neq 0$, we have $x{^*}' \neq1$, so $x{^*}' \leq x{^*} $ by Lemma \ref{regF} and (DM), 
implying
 $x \lor x^* =1$ by Lemma \ref{regC}.  
\end{proof}

\begin{Theorem}\label{regH}
 Let $\mathbf{L} \in \mathbf{RDMSH_1}$.  Then $\mathbf{L} \models x^* \lor x^{**} \approx 1$.
\end{Theorem}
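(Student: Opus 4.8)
\emph{Proof proposal.} The plan is to fix an arbitrary $x \in L$ and split into two cases according to whether $x^* = 0$, since all the substantive work has already been carried out in Lemmas \ref{regB}--\ref{regG}.

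First, suppose $x^* = 0$. Then $x^{**} = 0^* = 1$ (recall $0^* = 0 \to 0 = 1$ by (SH3)), so $x^* \lor x^{**} = 0 \lor 1 = 1$, and the identity holds for this $x$.

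Next, suppose $x^* \neq 0$. Then Lemma \ref{regG} applies and yields $x \lor x^* = 1$. Since $\mathbf{L}$ is pseudocomplemented with $a^*$ the pseudocomplement of $a$, we have the standard inequality $x \leq x^{**}$; hence
\[
x^* \lor x^{**} \geq x^* \lor x = 1 ,
\]
so $x^* \lor x^{**} = 1$ in this case as well. As $x$ was arbitrary, $\mathbf{L} \models x^* \lor x^{**} \approx 1$, i.e., $\mathbf{L}$ satisfies (St).

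I do not expect a genuine obstacle at this stage: the entire weight of the argument rests in the earlier lemmas, in particular in the passage from regularity (R) to the ``local Stone'' statement of Lemma \ref{regG} (via the identity $x \lor x^* \lor x{^*}' = 1$ of Lemma \ref{regC} and the ``$x \neq 1 \Rightarrow x \leq x'$'' principle of Lemma \ref{regF}). Once those are in hand, the only remaining ingredients are the trivial case $x^* = 0$ and the elementary inequality $x \leq x^{**}$ valid in every pseudocomplemented semilattice.
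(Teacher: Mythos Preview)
Your proposal is correct and follows essentially the same approach as the paper: both split on whether $x^*=0$, handle the trivial case directly, and in the case $x^*\neq 0$ invoke Lemma~\ref{regG} to obtain $x\lor x^*=1$, from which the Stone identity is immediate. You merely spell out the step $x\leq x^{**}$ that the paper leaves implicit in ``the conclusion is now immediate.''
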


\begin{proof}
Let $a \in L$.    
If $a^* = 0$, Then the theorem is trivially true.  So, we can assume that $a^* \neq 0$.  Then  $a \lor a^* = 1$, in view of the preceding lemma.  The conclusion is now immediate.
\end{proof}

Recall from \cite{Sa12} that the subvariety $\mathbf {DMSH_2}$ of level 2
of $\mathbf {DMSH}$ is defined by the identity:
$x \land x'^* \land x^{2{\rm(}}{'^{*{\rm{)}}}} \approx x \land x'^*\land x^{2{\rm(}}{'^{*{\rm{)}}}} \land x^{3{\rm(}}{'^{*{\rm{)}}}},$ or equivalently, by the identity:\\

{\rm (L2)}  $(x \land x'^*)^{2{\rm(}}{'^{*{\rm{)}}}} \approx (x \land x'^*)^{{\rm(}}{'^{*{\rm{)}}}}$. 
\begin{remark}
The above theorem fails in $\mathbf{RDMSH_2}$, as the following example shows:
\newpage
\setlength{\unitlength}{.7cm} \vspace*{1cm}
\begin{picture} (8,1)  

\put(6,0){\circle*{.15}}  
\put(6.2,0){$e$}

\put(4,0){\circle*{.15}}  
\put(3.5,0){$d$}  

\put(5,1){\circle*{.15}} 
\put(5.2,-1.1){$c$}  


\put(4,-2){\circle*{.15}}  
\put(3.5, -2){$a$}  

\put(6,-2){\circle*{.15}}  
\put(6.2, -2.1){$b$}  

\put(5,-3){\circle*{.15}} 
\put(5.2,-3.3){$0$}  

\put(5,1){\circle*{.15}} 
\put(5.2, 1.1){$1$}  

\put(4,0){\line(1,1){1}}
\put(5,-1){\line(1,1){1}}
\put(4,-2){\line(1,1){1}}
\put(5,-3){\line(1,1){1}}

\put(4,0){\line(1,-1){1}}
\put(5,-1){\line(1,-1){1}}
\put(4,-2){\line(1,-1){1}}
\put(5,1){\line(1,-1){1}}

\put(5,-4.3){Figure 2}
\end{picture}

\end{remark}

\vspace{3.5cm}

\section{Applications}

Let $\mathbf{V(K)}$ denote the variety generated by the class $\mathbf{K}$ of algebras.

The following corollary is immediate from Theorem \ref{regH} and Corollary 3.4(a) of \cite{Sa14a}, 
and hence is an improvement on Corollary 3.4(a) of \cite{Sa14a}.


\begin{Corollary} We have\\
\begin{itemize}
\item[{\rm (a)}]
$\mathbf{RDMSH_1} = \mathbf{RDMStSH_1} = \mathbf{V(C_{10}^{dm})} \lor \mathbf{V(D_1, D_2, D_3)}$\\
\item[{\rm (b)}]
$\mathbf{RDMH_1} = \mathbf{RDMStH_1} = \mathbf{V(L_1^{dm})} \lor \mathbf{V(D_2)}$\\
\item[{\rm (c)}]
$\mathbf{RDMcmSH1} = \mathbf{V(L_{10}^{dm}, D_1)} = \mathbf{V(L_{10}^{dm})} \lor \mathbf{V(D_1)}$.
\end{itemize}
\end{Corollary}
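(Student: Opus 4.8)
The whole statement follows from Theorem~\ref{regH} together with the structural results of \cite{Sa14a}, once one notices how the level-1 regularity hypothesis forces the Stone identity. I would first dispose of (a): by definition $\mathbf{RDMStSH_1}$ is the subclass of $\mathbf{RDMSH_1}$ satisfying (St), so $\mathbf{RDMStSH_1}\subseteq\mathbf{RDMSH_1}$, while the opposite inclusion is precisely the assertion of Theorem~\ref{regH} that every $\mathbf{RDMSH_1}$-algebra satisfies $x^*\lor x^{**}\approx 1$. Hence $\mathbf{RDMSH_1}=\mathbf{RDMStSH_1}$, and Corollary 3.4(a) of \cite{Sa14a}, which identifies $\mathbf{RDMStSH_1}$ with $\mathbf{V(C_{10}^{dm})}\lor\mathbf{V(D_1,D_2,D_3)}$, yields (a); this is the promised improvement, the hypothesis (St) having been removed.

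For (b) and (c) the plan is to intersect the description in (a) with the subvariety defined by (SH4), respectively (Co), and to read off its generators. Recall that $\mathbf{DMSH_1}$ is a discriminator variety; it is therefore congruence-distributive and each of its subdirectly irreducible members is simple, so by J\'onsson's Lemma (ultraproducts of the finitely many finite algebras in $\mathbf{C_{10}^{dm}}\cup\{\mathbf{D_1,D_2,D_3}\}$ adding nothing new up to isomorphism) the subdirectly irreducible members of $\mathbf{V(C_{10}^{dm})}\lor\mathbf{V(D_1,D_2,D_3)}$ are, up to isomorphism, exactly the algebras of $\mathbf{C_{10}^{dm}}\cup\{\mathbf{D_1,D_2,D_3}\}$ and their ($2$-element) subalgebras. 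Consequently the subvariety of $\mathbf{RDMSH_1}$ cut out by (SH4) (resp.\ (Co)) is generated by those of these algebras that satisfy (SH4) (resp.\ (Co)); this is the same reduction already carried out in \cite{Sa14} and \cite{Sa14a}, now applied with $\mathbf{RDMSH_1}$ in place of $\mathbf{RDMStSH_1}$.

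It then remains to inspect the $\to$-tables of Figure~1. For (b): among the algebras $\mathbf{L}_1^{dm},\dots,\mathbf{L}_{10}^{dm}$ the identity $(x\land y)\to y\approx 1$ holds only for $\mathbf{L}_1^{dm}$ (the standard Heyting implication on the $3$-chain), and among $\mathbf{D_1,D_2,D_3}$ only for $\mathbf{D_2}$ (e.g.\ $0\to 1\neq 1$ in $\mathbf{D_1}$ and in $\mathbf{D_3}$), whence $\mathbf{RDMH_1}=\mathbf{V(L_1^{dm})}\lor\mathbf{V(D_2)}$; moreover $\mathbf{RDMH_1}=\mathbf{RDMStH_1}$ since (St) holds throughout by Theorem~\ref{regH}. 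For (c): checking symmetry of the tables shows that the commutative members are exactly $\mathbf{L}_{10}^{dm}$ and $\mathbf{D_1}$, so $\mathbf{RDMcmSH1}=\mathbf{V(L_{10}^{dm})}\lor\mathbf{V(D_1)}=\mathbf{V(L_{10}^{dm}, D_1)}$.

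There is no genuine obstacle here — the corollary is, as stated, essentially immediate. The only step that deserves care is the structural reduction in the second paragraph, namely that a subvariety of the discriminator variety $\mathbf{DMSH_1}$ generated within $\mathbf{RDMSH_1}$ by finitely many finite simple algebras is completely determined by which of those simple algebras (and their subalgebras) it contains; this is routine congruence-distributive/discriminator theory, and all remaining work is the finite verification of a handful of $3\times 3$ and $4\times 4$ operation tables.
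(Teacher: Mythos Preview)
Your proposal is correct and follows essentially the same approach as the paper, which simply declares the corollary immediate from Theorem~\ref{regH} together with Corollary~3.4(a) of \cite{Sa14a}. You have merely unpacked parts~(b) and~(c) by explicitly carrying out the J\'onsson-type reduction and the table inspections that are implicit in that citation.
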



Let $L \in \mathbf{DMSH_1}$.  We say $\mathbf{L}$ is pseudocommutative if \\
$\mathbf{L} \models (x \to y)^* = (y \to x)^*$.\\

\begin{Corollary}
Let $\mathbf{V}$ be a subvariety of $ \mathbf{RDMSH_1}$.  Then $\mathbf{V}$ is pseudocommutative iff
 $\mathbf{V}= \mathbf{V(L_9^{dm}, L_{10}^{dm}, D_1)}$.
\end{Corollary}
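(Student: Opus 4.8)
The plan is to exploit the description of subvarieties of $\mathbf{RDMSH_1}$ furnished by the preceding corollary, which (via Theorem~\ref{regH}) tells us that $\mathbf{RDMSH_1} = \mathbf{V(C_{10}^{dm})} \lor \mathbf{V(D_1, D_2, D_3)}$, and which (together with the main theorem of \cite{Sa14} describing the lattice of subvarieties) guarantees that every subvariety $\mathbf{V}$ of $\mathbf{RDMSH_1}$ is generated by the finite subset of the thirteen generating algebras $\{\mathbf{L}_1^{dm},\dots,\mathbf{L}_{10}^{dm},\mathbf{D_1},\mathbf{D_2},\mathbf{D_3}\}$ that it contains. Since pseudocommutativity is a two-variable identity, $\mathbf{V}$ is pseudocommutative if and only if each of those subdirectly irreducible generators lying in $\mathbf{V}$ satisfies $(x\to y)^* \approx (y\to x)^*$. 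Thus the whole problem reduces to a finite check: decide, for each of the thirteen algebras, whether it satisfies the identity, and then read off which joins of such algebras exhaust the pseudocommutative subvarieties.

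First I would carry out the finite verification on the $3$-element algebras. Using the $\to$-tables in Figure~1 and the $dm$-definition $a'=a$, $0'=1$, $1'=0$ (so that $x^* = x\to 0$ is computed from the first column of each table), one checks $(x\to y)^*$ against $(y\to x)^*$ for all nine ordered pairs $(x,y)\in\{0,a,1\}^2$. For the algebras $\mathbf{L}_9^{dm}$ and $\mathbf{L}_{10}^{dm}$ this should come out identically (their $\to$-tables have $0\to a = 0\to 1 = 0$, which is what forces the star-symmetry), while for $\mathbf{L}_1^{dm},\dots,\mathbf{L}_8^{dm}$ one should be able to exhibit a single pair — e.g.\ comparing $(1\to a)^*$ with $(a\to 1)^*$, or $(0\to a)^*$ with $(a\to 0)^*$ — on which the identity fails. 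Then I would do the analogous check on the $4$-element algebras $\mathbf{D_1},\mathbf{D_2},\mathbf{D_3}$ using their tables and $a'=a$, $b'=b$: here $\mathbf{D_1}$ should satisfy the identity (its table is symmetric enough after applying $(-)^*$), whereas $\mathbf{D_2}$ and $\mathbf{D_3}$ should fail it, again by producing one explicit offending pair.

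Once these thirteen one-line computations are in hand, the conclusion is immediate. The generators satisfying pseudocommutativity are exactly $\mathbf{L}_9^{dm}$, $\mathbf{L}_{10}^{dm}$, and $\mathbf{D_1}$; hence a subvariety $\mathbf{V}\subseteq\mathbf{RDMSH_1}$ is pseudocommutative if and only if it contains none of the other ten generators, i.e.\ if and only if $\mathbf{V}\subseteq\mathbf{V(L_9^{dm}, L_{10}^{dm}, D_1)}$. Since the identity does hold in $\mathbf{V(L_9^{dm}, L_{10}^{dm}, D_1)}$ (it is preserved under $\mathbf{H}$, $\mathbf{S}$, $\mathbf{P}$), the largest pseudocommutative subvariety is precisely $\mathbf{V(L_9^{dm}, L_{10}^{dm}, D_1)}$, which gives the stated equivalence; strictly speaking the corollary as phrased asserts equality with this variety, so one also notes that any \emph{proper} pseudocommutative subvariety would be captured by the ``iff'' reading, and the intended statement is that $\mathbf{V(L_9^{dm}, L_{10}^{dm}, D_1)}$ is the unique maximal such subvariety.

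The only real obstacle is bookkeeping: one must be careful that the lattice-of-subvarieties result from \cite{Sa14} genuinely lists $\{\mathbf{L}_i^{dm}\}\cup\{\mathbf{D_j}\}$ as the complete set of subdirectly irreducibles (so that no further algebra needs to be tested), and one must compute $x^* = x\to 0$ correctly from each table before comparing. There is no hard structural content beyond Theorem~\ref{regH} and the cited classification; the rest is a transparent finite case analysis.
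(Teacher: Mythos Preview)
Your approach is essentially identical to the paper's: invoke part (a) of the preceding corollary to reduce to the thirteen simples, then verify by direct computation that exactly $\mathbf{L_9^{dm}}$, $\mathbf{L_{10}^{dm}}$, and $\mathbf{D_1}$ satisfy $(x\to y)^* \approx (y\to x)^*$. Your closing observation that the ``iff'' literally characterizes the \emph{maximal} pseudocommutative subvariety (rather than all of them) is a fair reading of the statement, and your proposal handles it correctly.
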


\begin{proof}
It suffices, in view of (a) of the preceding corollary, to verify that 
$\mathbf{L_9^{dm}, L_{10}^{dm}}$, and $\mathbf{D_1}$ satisfy the pseudocommutative law,
while the rest of the simples in $\mathbf{RDMSH_1}$ do not.
\end{proof}
The proofs of the following corollaries are similar.
\begin{Corollary}
 The variety $\mathbf{V(L_9^{dm}, L_{10}^{dm}, D_1})$ is also defined, modulo $\mathbf{RDMSH1}$, by 
\begin{itemize}
\item[ ]  $x^* \to y^* \approx  y^* \to x^*$.
\end{itemize}
\end{Corollary}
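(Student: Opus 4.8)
The plan is to show that the identity $x^* \to y^* \approx y^* \to x^*$ is equivalent, modulo $\mathbf{RDMSH_1}$, to the pseudocommutative law $(x \to y)^* \approx (y \to x)^*$, so that the previous corollary applies directly and pins the variety down as $\mathbf{V(L_9^{dm}, L_{10}^{dm}, D_1})$. Since both candidate defining identities hold in a subvariety $\mathbf{V}$ of $\mathbf{RDMSH_1}$ iff they hold in each of its subdirectly irreducible members, and by Corollary (a) the subdirectly irreducible (indeed simple) members of $\mathbf{RDMSH_1}$ are exactly the algebras in $\mathbf{C_{10}^{dm}} \cup \{\mathbf{D_1}, \mathbf{D_2}, \mathbf{D_3}\}$, it suffices to verify, on these finitely many small algebras, that $x^* \to y^* \approx y^* \to x^*$ holds precisely on $\mathbf{L_9^{dm}}, \mathbf{L_{10}^{dm}}, \mathbf{D_1}$ and fails on the rest. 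This is the same finite check as in the proof of the previous corollary.

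First I would reduce the verification on the three-element algebras $\mathbf{L}_i^{dm}$. In each such algebra the only elements are $0, a, 1$ with $a' = a$, $0' = 1$, $1' = 0$, and $x^*$ takes only the values $0$ or $1$ (since $a^* = a \to 0$, and inspecting the tables in Figure 1 shows $a^* \in \{0,1\}$ in every case, while $0^* = 1$ and $1^* = 0$). Hence $x^* \to y^*$ is an implication between elements of $\{0,1\}$, and because $\langle \{0,1\}, \to \rangle$ is a sub-semi-Heyting algebra isomorphic to either $\mathbf{2}$ or $\mathbf{\bar 2}$, commutativity of $\to$ on $\{0,1\}$ holds iff the relevant two-element reduct is $\mathbf{\bar 2}$ (where $\to$ is the biconditional) and fails iff it is $\mathbf{2}$. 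So the check on the $\mathbf{L}_i^{dm}$ amounts to reading off which of them have $\{0,1\}$-reduct of type $\mathbf{\bar 2}$; from Figure 1 this isolates exactly $\mathbf{L}_9^{dm}$ and $\mathbf{L}_{10}^{dm}$ (for which $0 \to 1 = 0$), and excludes $\mathbf{L}_1^{dm}, \ldots, \mathbf{L}_8^{dm}$ (for which $0 \to 1 = 1 \neq 0 = 1 \to 0$, taking $x, y$ with $x^* = 0$, $y^* = 1$, e.g. $x = 1$, $y = 0$).

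Next I would handle the three four-element algebras $\mathbf{D_1}, \mathbf{D_2}, \mathbf{D_3}$, whose lattice reduct is the Boolean lattice $\{0, a, b, 1\}$ with $a' = a$, $b' = b$. Here $x^* = x \to 0$, and the first column of each table in Figure 1 gives: in $\mathbf{D_1}$, $0^* = 1, 1^* = 0, a^* = b, b^* = a$; in $\mathbf{D_2}$, $0^* = 1, 1^* = 0, a^* = b, b^* = a$; in $\mathbf{D_3}$, $0^* = 1, 1^* = 0, a^* = b, b^* = a$. So in all three the set of values of $x^*$ is the whole universe $\{0,a,b,1\}$, and pseudocommutativity of $x^* \to y^*$ is just full commutativity of $\to$. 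Comparing the $\to$-table with its transpose: $\mathbf{D_1}$ is commutative (its table is symmetric); $\mathbf{D_2}$ is not (e.g. $0 \to 1 = 1$ but $1 \to 0 = 0$); $\mathbf{D_3}$ is not (e.g. $0 \to 1 = a$ but $1 \to 0 = 0$). This isolates $\mathbf{D_1}$ among the $\mathbf{D_i}$.

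Putting these together, $x^* \to y^* \approx y^* \to x^*$ holds in a simple $\mathbf{RDMSH_1}$-algebra $\mathbf{S}$ iff $\mathbf{S} \in \{\mathbf{L_9^{dm}}, \mathbf{L_{10}^{dm}}, \mathbf{D_1}\}$, and therefore, by the argument of the previous corollary, holds in a subvariety $\mathbf{V}$ of $\mathbf{RDMSH_1}$ iff $\mathbf{V} \subseteq \mathbf{V(L_9^{dm}, L_{10}^{dm}, D_1})$. Since the three generators themselves satisfy the identity, the variety they generate is exactly the class defined by it modulo $\mathbf{RDMSH_1}$, giving the claim. I do not expect a genuine obstacle here: the only delicate point is being careful that $x^*$ really does range over all of $\{0,a,b,1\}$ in each $\mathbf{D_i}$ (so that the new identity is equivalent to plain commutativity of $\to$ there rather than to something weaker) and, dually, that on the $\mathbf{L}_i^{dm}$ the values of $x^*$ lie in $\{0,1\}$ — both are immediate from Figure 1, so the proof is a short finite verification parallel to the one already indicated for the preceding corollary.
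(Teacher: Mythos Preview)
Your approach is exactly the paper's: reduce to the finite list of simples from Corollary~4.1(a) and check the identity on each. Two small corrections, neither of which damages the conclusion. First, the list of simples in $\mathbf{RDMSH_1}$ should also include $\mathbf{2^e}$ and $\mathbf{\bar 2^e}$; these are handled implicitly by your $\{0,1\}$-reduct argument anyway, and since $\mathbf{\bar 2^e}$ is a subalgebra of $\mathbf{L_9^{dm}}$ it is already in $\mathbf{V(L_9^{dm}, L_{10}^{dm}, D_1)}$. Second, your claim that $\langle\{0,1\},\to\rangle$ is a sub-semi-Heyting algebra of each $\mathbf{L}_i$ is false for $i=5,6,7,8$: in those tables $0\to 1=a\notin\{0,1\}$, so $\{0,1\}$ is not closed under $\to$, and in particular $0\to 1\neq 1$ there. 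However, the counterexample you give still works verbatim, since $0\to 1=a\neq 0=1\to 0$; so the identity fails on $\mathbf{L}_5^{dm},\ldots,\mathbf{L}_8^{dm}$ for the same instance $x=1$, $y=0$, just with a different witness value. With those two tweaks your verification is complete and matches the paper's intended argument.
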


\begin{Corollary}
 The variety $\mathbf{V(L_1^{dm}, L_2^{dm}, L_3^{dm}, L_4^{dm},  D_2, D_3})$ is defined, modulo $\mathbf{RDMSH1}$, by 
\begin{itemize}
\item[ ]  $(0 \to 1)^+  \to (0 \to 1){^*}'^* \approx  0 \to 1$.
\end{itemize}
\end{Corollary}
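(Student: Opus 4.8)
The plan is to follow the template used for the proofs of the corollaries stated just above in this section. Put $t := 0 \to 1$; since $t$ is a closed term it takes a fixed value in each algebra, so the displayed identity is equivalent to the single equation $t^+ \to (t^*)'^* \approx t$ (where, per the conventions of the paper, $t^+ = ((t')^*)'$ and $(t^*)'^* = ((t^*)')^*$). By part (a) of the first corollary of this section we have $\mathbf{RDMSH_1} = \mathbf{V(C_{10}^{dm})} \lor \mathbf{V(D_1, D_2, D_3)}$, and this variety is congruence distributive and is generated by the thirteen finite simple algebras $\mathbf{L_1^{dm}}, \dots, \mathbf{L_{10}^{dm}}, \mathbf{D_1}, \mathbf{D_2}, \mathbf{D_3}$; hence by J\'onsson's Lemma every subvariety of $\mathbf{RDMSH_1}$ is generated by the subfamily of those thirteen algebras that it contains. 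Therefore it suffices to check that $\mathbf{L_1^{dm}}, \mathbf{L_2^{dm}}, \mathbf{L_3^{dm}}, \mathbf{L_4^{dm}}, \mathbf{D_2}$ and $\mathbf{D_3}$ satisfy $t^+ \to (t^*)'^* \approx t$, whereas $\mathbf{L_5^{dm}}, \mathbf{L_6^{dm}}, \mathbf{L_7^{dm}}, \mathbf{L_8^{dm}}, \mathbf{L_9^{dm}}, \mathbf{L_{10}^{dm}}$ and $\mathbf{D_1}$ do not.

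I would organize the verification by the value of $t = 0 \to 1$, using that in every one of the thirteen algebras $0 \to 0 = 1$, $1 \to 0 = 0$, $0' = 1$ and $1' = 0$. If $t = 1$ (the value in $\mathbf{L_1^{dm}}, \mathbf{L_2^{dm}}, \mathbf{L_3^{dm}}, \mathbf{L_4^{dm}}$ and $\mathbf{D_2}$), then $t^* = 1 \to 0 = 0$, so $t^+ = 0$ and $(t^*)'^* = (0')^* = 1^* = 0$, and the left side is $0 \to 0 = 1 = t$, so the identity holds. If $t = 0$ (the value in $\mathbf{L_9^{dm}}, \mathbf{L_{10}^{dm}}$ and $\mathbf{D_1}$), then $t^* = 0 \to 0 = 1$, so $t^+ = 1$ and $(t^*)'^* = (1')^* = 0^* = 1$, and the left side is $1 \to 1 = 1 \neq 0 = t$, so the identity fails. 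If $t = a$ (the value in $\mathbf{L_5^{dm}}, \mathbf{L_6^{dm}}, \mathbf{L_7^{dm}}, \mathbf{L_8^{dm}}$ and in $\mathbf{D_3}$), then $a' = a$ in each of these, and one splits according to $a^* = a \to 0$: in $\mathbf{L_5^{dm}}, \dots, \mathbf{L_8^{dm}}$ one reads $a^* = 0$ off the table, so $t^+ = 1$ and $(t^*)'^* = (0')^* = 1^* = 0$, giving left side $1 \to 0 = 0 \neq a = t$; in $\mathbf{D_3}$ one reads $a^* = b$, so $t^+ = b' = b$ and $(t^*)'^* = (b')^* = b^* = b \to 0 = a$, giving left side $b \to a = a = t$ (again read off the $\mathbf{D_3}$ table). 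This is exactly the split asserted in the statement, so the identity defines $\mathbf{V(L_1^{dm}, L_2^{dm}, L_3^{dm}, L_4^{dm}, D_2, D_3)}$ modulo $\mathbf{RDMSH_1}$.

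There is no serious obstacle here: once the reduction to the thirteen generating simples is in place — and that is precisely what part (a) of the first corollary, together with congruence distributivity and J\'onsson's Lemma, supplies — everything reduces to a finite table computation. The only points demanding care are the bookkeeping of the superscript conventions (that $t^+ = ((t')^*)'$ and $(t^*)'^* = ((t^*)')^*$, read left to right) and reading each value of $\to$ and of $\,'\,$ off the correct row of the right table. The corollaries immediately preceding this one are established in the identical fashion, only the defining identity and the resulting list of generators changing.
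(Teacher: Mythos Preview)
Your proposal is correct and follows exactly the approach indicated in the paper: the paper proves Corollary~4.2 by verifying the identity on the finitely many simple generators supplied by Corollary~4.1(a), and then simply says ``The proofs of the following corollaries are similar''---your case analysis by the value of $t=0\to 1$ is precisely that verification carried out in detail, and all the table computations are right. One small imprecision: the sentence ``every subvariety of $\mathbf{RDMSH_1}$ is generated by the subfamily of those thirteen algebras that it contains'' is not literally true (for instance $\mathbf{V(\bar 2^e)}$ contains none of the thirteen), since the full list of simples also includes $\mathbf{2^e}$ and $\mathbf{\bar 2^e}$; however, your case split already handles these two ($t=1$ and $t=0$ respectively), so the argument goes through unchanged once you note that $\mathbf{2^e}$ is a subalgebra of $\mathbf{L_1^{dm}}$ and that $\mathbf{\bar 2^e}$ fails the identity.
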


It  was proved in \cite{Sa12} that $\mathbf{V(D_1, D_2, D_3}) = \mathbf{DQDBSH}$.  Here are some more bases for $\mathbf{V(D_1, D_2, D_3})$.

\begin{Corollary}
Each of the following identities is a base for
 the variety $\mathbf{V(D_1, D_2, D_3})$ modulo $\mathbf{RDMSH1}$: 
\begin{itemize}
\item[(1)]  $x \to y  \approx  y^* \to x^*$  {\rm (Law of contraposition)}
\item[(2)]  $x \lor (y \to z) \approx  (x \lor y) \to (x \lor z)$
\item[(3)]   $ [\{x \lor (x \to y^*)\} \to (x \to y^*)] \lor (x \lor y^*)=1$.\\
\end{itemize}
\end{Corollary}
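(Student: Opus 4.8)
The plan is to verify each of the three identities on the three generating algebras $\mathbf{D_1},\mathbf{D_2},\mathbf{D_3}$ and, conversely, to show that any $\mathbf{RDMSH_1}$-algebra satisfying one of these identities must lie in $\mathbf{V(D_1,D_2,D_3)}$. Since $\mathbf{V(D_1,D_2,D_3)}=\mathbf{DQDBSH}$ was already established in \cite{Sa12}, and since Corollary~4.2(a) gives the complete list of simple algebras in $\mathbf{RDMSH_1}$ (namely the members of $\mathbf{C_{10}^{dm}}$ together with $\mathbf{D_1},\mathbf{D_2},\mathbf{D_3}$), the problem reduces to a finite check: for each candidate identity (1)--(3), confirm it holds in $\mathbf{D_1},\mathbf{D_2},\mathbf{D_3}$, and confirm it fails in each of $\mathbf{L}_i^{dm}$, $i=1,\dots,10$. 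A subvariety of $\mathbf{RDMSH_1}$ is determined by which of these finitely many simples it contains (because $\mathbf{RDMSH_1}$ is a discriminator variety, hence congruence-distributive and semisimple with finitely many simples up to isomorphism), so an identity that is satisfied by exactly $\{\mathbf{D_1},\mathbf{D_2},\mathbf{D_3}\}$ among the simples axiomatizes $\mathbf{V(D_1,D_2,D_3)}$ modulo $\mathbf{RDMSH_1}$.

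First I would dispense with the easy direction. Identity (1), the law of contraposition $x\to y\approx y^*\to x^*$, and identity (2), the meet-distributivity-like law $x\lor(y\to z)\approx(x\lor y)\to(x\lor z)$, are both routine to check on the three four-element Boolean-lattice algebras using the $\to$-tables in Figure~1 and the De Morgan operation $a'=a$, $b'=b$; identity (3) is likewise a finite evaluation on these same three algebras. For the converse direction, the cleanest route is: in each $\mathbf{L}_i^{dm}$ exhibit a specific substitution of elements from $\{0,a,1\}$ at which the two sides of the identity differ. For (1), note that on $\mathbf{D_1}$ one already has $\mathbf{D_1}\models x\to y\approx y^*\to x^*$ by direct inspection, while on, say, $\mathbf{L}_1^{dm}$ the value $0\to a=1$ but $a^*\to 0^*=a^*\to 1$, and since $a^*=a\to 0=0$ in $\mathbf{L}_1^{dm}$ (read off the table: $a\to 0=0$) this becomes $0\to 1=1$, so one must hunt a little for the right witness; the systematic thing is to tabulate, for each $\mathbf{L}_i^{dm}$, the full binary functions $x\to y$ and $y^*\to x^*$ and point to one disagreeing entry. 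For (2) the natural witness is $x:=a$, $y:=0$, $z:=0$ or similar, using that $a\lor(0\to 0)=a\lor 1=1$ must equal $(a\lor 0)\to(a\lor 0)=a\to a=1$ — so again one needs to scan for the discriminating triple; for (3) one similarly scans.

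The main obstacle is not conceptual but bookkeeping: one must organize the $10\times 3=30$ (or more, counting the identities) finite evaluations so that nothing is missed, and in particular one must be careful that the intended base really captures $\mathbf{V(D_1,D_2,D_3)}$ and not a larger variety — i.e. one must be sure that no $\mathbf{L}_i^{dm}$ slips through by accidentally satisfying the identity. This is where a uniform argument helps: each $\mathbf{D_j}$ has a Boolean lattice reduct with $a^{**}=0^*=1$ patterns forcing $x\lor x^*=1$ (they are Boolean semi-Heyting algebras), whereas each $\mathbf{L}_i^{dm}$ has the three-element chain $0<a<1$ as lattice reduct, so $a\lor a^*\in\{a,1\}$ and in fact $a^*=a\to 0\le a$, giving $a\lor a^*=a\neq 1$ in the cases where $a^*<1$; one can then show each of (1)--(3) \emph{implies} $x\lor x^*\approx 1$ modulo $\mathbf{RDMSH_1}$ (for (1): $1=a\to a$ compared with... — more precisely, (1) with $y:=0$ gives $x\to 0\approx 0^*\to x^*=1\to x^*$, and combined with (SH1)-(SH3) and De Morgan one derives the Boolean identity), which by Corollary~4.2(a) and the fact that $\mathbf{DQDBSH}=\mathbf{V(D_1,D_2,D_3)}$ collapses the variety down to exactly the span of the $\mathbf{D_j}$'s. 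So the proof is: (i) the three identities hold in $\mathbf{D_1},\mathbf{D_2},\mathbf{D_3}$ (finite check); (ii) each implies $x\lor x^*\approx 1$ modulo $\mathbf{RDMSH_1}$ (short derivations, or equivalently a finite check that each $\mathbf{L}_i^{dm}$ fails the identity); (iii) conclude via $\mathbf{V(D_1,D_2,D_3)}=\mathbf{DQDBSH}$.
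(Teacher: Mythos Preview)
Your proposal is correct and follows essentially the same approach as the paper. The paper's proof is subsumed under the sentence ``The proofs of the following corollaries are similar,'' referring back to the method used for the pseudocommutative case: given Corollary~4.1(a), the simples of $\mathbf{RDMSH_1}$ are exactly $\mathbf{C_{10}^{dm}}\cup\{\mathbf{D_1},\mathbf{D_2},\mathbf{D_3}\}$, so one verifies that each identity holds in $\mathbf{D_1},\mathbf{D_2},\mathbf{D_3}$ and fails in each $\mathbf{L}_i^{dm}$, which is precisely your step~(i) together with the finite-check version of your step~(ii). Your alternative suggestion of deriving $x\lor x^*\approx 1$ syntactically from each identity is a nice organizing idea, but as you yourself noticed (e.g.\ the $y:=0$ substitution in (1) gives only the tautology $x^*\approx 1\to x^*$), these derivations are not as immediate as the direct table check, and the paper does not pursue them.
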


\begin{Corollary} 
 The variety \\
 $\mathbf{V(L_1^{dm}, L_2^{dm}, L_5^{dm}, L_6^{dm}, L_9^{dm}, D_1, D_2, D_3})$ is defined, modulo\\
  $\mathbf{RDMSH1}$, by 
\begin{itemize}
\item[ ]  $x \to y^*  \approx  y \to x^*$.
\end{itemize}
\end{Corollary}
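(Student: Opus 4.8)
The plan is to argue as in the proofs of the preceding corollaries, reducing the assertion to a finite check on simple algebras. Write $\varepsilon$ for the identity $x \to y^* \approx y \to x^*$ and put $\mathbf{K}=\{\mathbf{L_1^{dm}},\mathbf{L_2^{dm}},\mathbf{L_5^{dm}},\mathbf{L_6^{dm}},\mathbf{L_9^{dm}},\mathbf{D_1},\mathbf{D_2},\mathbf{D_3}\}$. Since $\mathbf{DMSH_1}$, and hence $\mathbf{RDMSH_1}$, is a discriminator variety, every member of $\mathbf{RDMSH_1}$ is a subdirect product of simple algebras, and by part~(a) of the first corollary of this section each such simple is isomorphic either to one of $\mathbf{L_1^{dm}},\dots,\mathbf{L_{10}^{dm}},\mathbf{D_1},\mathbf{D_2},\mathbf{D_3}$ or to one of $\mathbf{2^e},\mathbf{\bar{2}^e}$; moreover $\mathbf{2^e}$ is (isomorphic to) a subalgebra of $\mathbf{L_1^{dm}}$ and $\mathbf{\bar{2}^e}$ one of $\mathbf{L_9^{dm}}$, so both already lie in $\mathbf{V(K)}$. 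As identities pass to homomorphic images and to subdirect products, it therefore suffices to show that every algebra of $\mathbf{K}$ satisfies $\varepsilon$, while each of $\mathbf{L_3^{dm}},\mathbf{L_4^{dm}},\mathbf{L_7^{dm}},\mathbf{L_8^{dm}},\mathbf{L_{10}^{dm}}$ fails it; this forces the subvariety of $\mathbf{RDMSH_1}$ defined by $\varepsilon$ to have exactly the same simple members as $\mathbf{V(K)}$, hence to equal it.

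For the failures I would use the single assignment $x:=0$, $y:=a$. Reading off the tables of Figure~1 one has, in every $\mathbf{L_i^{dm}}$, that $0^* = 0\to 0 = 1$ and $a^* = a\to 0 = 0$, so under this assignment the left side of $\varepsilon$ becomes $0\to a^* = 0\to 0 = 1$ while the right side becomes $a\to 0^* = a\to 1$. Since $a\to 1 = a \neq 1$ precisely in $\mathbf{L_3}$, $\mathbf{L_4}$, $\mathbf{L_7}$, $\mathbf{L_8}$ and $\mathbf{L_{10}}$, the identity $\varepsilon$ fails in $\mathbf{L_3^{dm}},\mathbf{L_4^{dm}},\mathbf{L_7^{dm}},\mathbf{L_8^{dm}},\mathbf{L_{10}^{dm}}$, as needed.

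For the positive direction, on the three-element algebras I would note that $x^* = 1$ if $x = 0$ and $x^* = 0$ otherwise; splitting the verification of $\varepsilon$ according to whether each of $x,y$ equals $0$ then collapses every case to a triviality except for the single demand $a\to 1 = 1$, which does hold in $\mathbf{L_1},\mathbf{L_2},\mathbf{L_5},\mathbf{L_6}$ and $\mathbf{L_9}$. On $\mathbf{D_1},\mathbf{D_2},\mathbf{D_3}$, each of which is a Boolean semi-Heyting algebra, $x^*$ is the Boolean complement $\bar{x}$, so $\varepsilon$ reads $x\to\bar{y}\approx y\to\bar{x}$; this I would confirm by inspecting the sixteen pairs of elements against the implication tables of Figure~1, with the shortcut, for $\mathbf{D_1}$, that there $x\to y = x\leftrightarrow y$, whence $x\to\bar{y} = x\oplus y$ is visibly symmetric in $x$ and $y$.

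I do not foresee any real obstacle: once the reduction of the first paragraph is in place, the whole proof is a finite computation on the listed simple algebras. The only points demanding care are the accurate reading of the operation $'$ (recall $a' = a$ in each $\mathbf{L_i^{dm}}$ and in each $\mathbf{D_j}$) along with the implication tables, and the appeal to the discriminator structure of $\mathbf{RDMSH_1}$ together with part~(a) of the first corollary, which is exactly what licenses replacing a claim about the infinite variety $\mathbf{RDMSH_1}$ by these finitely many table checks.
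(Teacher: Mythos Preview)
Your proposal is correct and follows exactly the approach the paper intends: the paper's own proof is simply the sentence ``the proofs of the following corollaries are similar'', referring back to the argument for Corollary~4.2, which in turn says it suffices, by Corollary~4.1(a), to check the identity on the finite list of simples. You have carried out precisely that check, with a clean uniform counterexample $(x,y)=(0,a)$ for the five failing chains and an efficient case split for the positive direction; your proof is therefore just a more explicit version of the paper's.
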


\begin{Corollary}
 The variety $\mathbf{V(L_7^{dm}, L_8^{dm}, L_9^{dm}, L_{10}^{dm}, D_1, D_2, D_3})$ is defined, modulo $\mathbf{RDMSH1}$, by 
\begin{itemize}
\item[ ]  $x \lor  (x \to y)  \approx  x \lor [(x \to y) \to 1]$.
\end{itemize}
\end{Corollary}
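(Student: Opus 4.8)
The assertion is that, modulo $\mathbf{RDMSH_1}$, the variety $\mathbf{V(L_7^{dm}, L_8^{dm}, L_9^{dm}, L_{10}^{dm}, D_1, D_2, D_3})$ is axiomatized by the single identity
\[
\text{(E)} \qquad x \lor (x \to y) \approx x \lor [(x \to y) \to 1].
\]
By Corollary 4.2(a) (the decomposition $\mathbf{RDMSH_1} = \mathbf{V(C_{10}^{dm})} \lor \mathbf{V(D_1, D_2, D_3)}$) and the fact that $\mathbf{DMSH_1}$ is a discriminator variety, every subvariety of $\mathbf{RDMSH_1}$ is generated by a subset of the simple algebras $\{\mathbf{L}_1^{dm}, \dots, \mathbf{L}_{10}^{dm}, \mathbf{D_1}, \mathbf{D_2}, \mathbf{D_3}\}$, and is determined by which of those simples satisfy the defining identities. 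Hence the proof splits into exactly two finite verifications, parallel to the proof of Corollary 4.3: first, check that each of $\mathbf{L}_7^{dm}, \mathbf{L}_8^{dm}, \mathbf{L}_9^{dm}, \mathbf{L}_{10}^{dm}, \mathbf{D_1}, \mathbf{D_2}, \mathbf{D_3}$ satisfies (E); second, check that each of the remaining six simples $\mathbf{L}_1^{dm}, \mathbf{L}_2^{dm}, \mathbf{L}_3^{dm}, \mathbf{L}_4^{dm}, \mathbf{L}_5^{dm}, \mathbf{L}_6^{dm}$ fails (E). This immediately gives that a subvariety $\mathbf{V} \subseteq \mathbf{RDMSH_1}$ satisfies (E) if and only if the set of simples generating $\mathbf{V}$ is contained in $\{\mathbf{L}_7^{dm}, \mathbf{L}_8^{dm}, \mathbf{L}_9^{dm}, \mathbf{L}_{10}^{dm}, \mathbf{D_1}, \mathbf{D_2}, \mathbf{D_3}\}$, which is the claim.

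For the first verification, I would note that on the three-element algebras $\mathbf{L}_i$ the relevant quantity $(x \to y) \to 1$ can be read directly from the $\to$-tables in Figure 1 (for instance $1 \to 1 = 1$, $a \to 1 = a$, $0 \to 1 = 0$ in $\mathbf{L}_7, \mathbf{L}_8, \mathbf{L}_9, \mathbf{L}_{10}$, depending on the table), so that for each pair $(x,y) \in \{0,a,1\}^2$ one compares $x \lor (x\to y)$ with $x \lor [(x\to y)\to 1]$; since joining with $x$ absorbs differences below $x$, the identity holds whenever $(x\to y)$ and $(x\to y)\to 1$ differ only at values $\le x$. The point is that in $\mathbf{L}_7$–$\mathbf{L}_{10}$ the "offending" discrepancy between $z$ and $z\to 1$ (namely $a$ versus $0$, or $1$ versus $a$) always occurs for a value of $x\to y$ that lies below $x$, whereas in $\mathbf{L}_1$–$\mathbf{L}_6$ one can exhibit a pair $(x,y)$ — e.g. with $x=0$ — for which $0 \to y$ equals $a$ or $1$ but $(0\to y)\to 1$ equals $0$ or $a$, breaking (E). For $\mathbf{D_1}, \mathbf{D_2}, \mathbf{D_3}$ one does the analogous finite check over $\{0,a,b,1\}^2$ using the four-by-four $\to$-tables. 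Since $'$ does not appear in (E), the $dm$-expansion is irrelevant for this step — only the semi-Heyting reducts matter — which keeps the bookkeeping light.

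The \emph{main obstacle} is purely the volume of the case analysis rather than any conceptual difficulty: one must be careful to check \emph{all} pairs $(x,y)$ in each of the seven "positive" algebras (to confirm (E) genuinely holds everywhere, not just at a few sample points) and to correctly locate a \emph{witnessing} pair in each of the six "negative" algebras. The subtlety to watch is that (E) can hold vacuously-looking on many pairs (whenever $x \to y = (x\to y)\to 1$, or whenever both sides collapse to $1$ after joining with $x$), so the discriminating pairs are few and must be found precisely; reading the tables of $\mathbf{L}_1,\dots,\mathbf{L}_6$ shows the relevant row is the $0$-row, where $0 \to y$ takes values that are not fixed by $z \mapsto z \to 1$. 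Once these thirteen finite checks are assembled, the conclusion follows exactly as in Corollaries 4.3–4.7, and no separate completeness argument is needed because the discriminator/Jónsson-style structure of $\mathbf{RDMSH_1}$ already reduces equational axiomatization over this variety to behaviour on the listed simples. I would therefore write the proof as: "The proof is similar to that of Corollary 4.6 (or 4.3): one verifies by inspection of Figure 1 that $\mathbf{L}_7^{dm}, \mathbf{L}_8^{dm}, \mathbf{L}_9^{dm}, \mathbf{L}_{10}^{dm}, \mathbf{D_1}, \mathbf{D_2}, \mathbf{D_3}$ satisfy the displayed identity, while $\mathbf{L}_1^{dm}, \mathbf{L}_2^{dm}, \mathbf{L}_3^{dm}, \mathbf{L}_4^{dm}, \mathbf{L}_5^{dm}, \mathbf{L}_6^{dm}$ do not; the claim then follows from Corollary 4.2(a)."
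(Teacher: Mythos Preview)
Your approach is correct and coincides with the paper's: immediately after Corollary~4.2 the paper declares that ``the proofs of the following corollaries are similar,'' meaning one checks, using the description of the simples in $\mathbf{RDMSH_1}$ from Corollary~4.1(a) (your ``4.2(a)''), which of the thirteen simple algebras satisfy the displayed identity. One minor inaccuracy in your informal discussion: in $\mathbf{L}_1$ and $\mathbf{L}_3$ the $0$-row of the $\to$-table is constantly $1$, so the failure of (E) there is witnessed not on the $0$-row but, e.g., at $x=a$, $y=0$ (where $a\to 0=0$ yet $(a\to 0)\to 1=1$); this does not affect the verification-by-inspection proof you actually propose.
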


\begin{Corollary}
 The variety $\mathbf{V(L_7^{dm}, L_8^{dm},  D_2})$ is defined, modulo $\mathbf{RDMSH1}$, by 
\begin{itemize}
\item[(1)]  $x \lor  (x \to y)  \approx  x \lor [(x \to y) \to 1]$
\item[(2)] $ (0 \to 1)^{**} \approx 1$.
\end{itemize}
\end{Corollary}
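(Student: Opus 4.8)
The plan is to deduce this from the immediately preceding corollary together with a short finite verification. Write $\mathbf{W} := \mathbf{V(L_7^{dm}, L_8^{dm}, L_9^{dm}, L_{10}^{dm}, D_1, D_2, D_3)}$. By the preceding corollary, identity (1), namely $x \lor (x \to y) \approx x \lor [(x \to y) \to 1]$, defines $\mathbf{W}$ modulo $\mathbf{RDMSH_1}$. Hence the variety $\mathbf{V}$ defined modulo $\mathbf{RDMSH_1}$ by (1) and (2) together is precisely the subvariety of $\mathbf{W}$ carved out by the additional identity (2), $(0 \to 1)^{**} \approx 1$.

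Next I would invoke the structure of $\mathbf{RDMSH_1}$ established at the start of Section 4: by part (a) of the first corollary of this section, $\mathbf{RDMSH_1} = \mathbf{V(C_{10}^{dm})} \lor \mathbf{V(D_1, D_2, D_3)}$ is a discriminator variety generated by finitely many finite simple algebras, so each of its subvarieties --- in particular $\mathbf{W}$ and $\mathbf{V}$ --- is generated by the simple algebras it contains. The simple algebras lying in $\mathbf{W}$ are, up to isomorphism, $\mathbf{L_7^{dm}}, \mathbf{L_8^{dm}}, \mathbf{L_9^{dm}}, \mathbf{L_{10}^{dm}}, \mathbf{D_1}, \mathbf{D_2}, \mathbf{D_3}$ together with the two-element algebra $\mathbf{2^e}$ (a subalgebra of $\mathbf{D_2}$) and $\mathbf{\bar{2}^e}$ (a subalgebra of $\mathbf{D_1}$). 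Therefore $\mathbf{V}$ is generated by exactly those members of this list that satisfy (2).

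The remaining step is the finite check, read off from Figure 1. In $\mathbf{L_7^{dm}}$ and in $\mathbf{L_8^{dm}}$ we have $0 \to 1 = a$ and $a^* = a \to 0 = 0$, hence $(0 \to 1)^{**} = 0^* = 1$; in $\mathbf{D_2}$ we have $0 \to 1 = 1$, $1^* = 0$, $0^* = 1$, so again $(0 \to 1)^{**} = 1$; and $\mathbf{2^e}$ likewise has $0 \to 1 = 1$, so it satisfies (2) as well. On the other hand $\mathbf{L_9^{dm}}$, $\mathbf{L_{10}^{dm}}$, $\mathbf{D_1}$ and $\mathbf{\bar{2}^e}$ all satisfy $0 \to 1 = 0$ with $0^{**} = 1^* = 0 \ne 1$, while $\mathbf{D_3}$ has $0 \to 1 = a$, $a^* = b$, $b^* = a$, so $(0 \to 1)^{**} = a \ne 1$. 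Thus the simple algebras of $\mathbf{W}$ satisfying (2) are exactly $\mathbf{L_7^{dm}}, \mathbf{L_8^{dm}}, \mathbf{D_2}$ and $\mathbf{2^e}$, and since $\mathbf{2^e} \in \mathbf{V(D_2)}$ it contributes nothing new; hence $\mathbf{V} = \mathbf{V(L_7^{dm}, L_8^{dm}, D_2)}$. For the reverse containment one notes that $\mathbf{L_7^{dm}}, \mathbf{L_8^{dm}}, \mathbf{D_2}$ all belong to $\mathbf{W}$, hence satisfy (1), and were just checked to satisfy (2), so $\mathbf{V(L_7^{dm}, L_8^{dm}, D_2)} \subseteq \mathbf{V}$, completing the identification.

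The only point requiring care is the standard discriminator-variety fact used in the second paragraph --- that a subvariety of $\mathbf{RDMSH_1}$ is generated by (and hence determined by) its simple members, so that imposing identity (2) amounts simply to deleting from the generating set of $\mathbf{W}$ the simple algebras that fail it. Given the structure theorem at the top of Section 4 this is routine, and everything else reduces to the finite computation above, so I do not anticipate a genuine obstacle; the proof is of the same type as those of the preceding corollaries.
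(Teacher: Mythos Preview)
Your argument is correct and follows essentially the same approach as the paper, which simply says that the proofs of these corollaries are similar to that of Corollary~4.2: one checks which of the finitely many simple algebras of $\mathbf{RDMSH_1}$ (listed in Corollary~4.1(a)) satisfy the given identities and which do not. Your write-up is more detailed---using the preceding corollary to first restrict attention to $\mathbf{W}$, and carefully tracking the two-element subalgebras $\mathbf{2^e}$, $\mathbf{\bar 2^e}$---but the underlying mechanism is exactly the same finite verification.
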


\begin{Corollary} 
 The variety $\mathbf{V(2^e, L_7^{dm}, L_8^{dm}, L_9^{dm},  L_{10}^{dm}})$ is\\ defined, modulo $\mathbf{RDMSH1}$, by 
\begin{itemize}
\item[(1)]  $x \lor  (x \to y)  \approx  x \lor [(x \to y) \to 1]$
\item[(2)]  $x{^*}' \approx  x^{**}$.
\end{itemize}
\end{Corollary}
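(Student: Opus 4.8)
The plan is to follow the template of the preceding corollaries: separate the simple algebras of $\mathbf{RDMSH_1}$ by the two identities, and then invoke the fact that $\mathbf{RDMSH_1}$ is a discriminator variety. Recall that $\mathbf{RDMSH_1}$, being a subvariety of the discriminator variety $\mathbf{DMSH_1}$, is itself a discriminator variety, hence so is every subvariety $\mathbf{W}$ of it; each such $\mathbf{W}$ is generated by, and completely determined by, the simple algebras it contains. Moreover, from $\mathbf{RDMSH_1}=\mathbf{V(C_{10}^{dm})}\lor\mathbf{V(D_1,D_2,D_3)}$ (part (a) of the first corollary of this section) together with the classification of simples in \cite{Sa14} and \cite{Sa14a}, the simple members of $\mathbf{RDMSH_1}$ are, up to isomorphism, exactly $\mathbf{2^e}$, $\mathbf{\bar{2}^e}$, $\mathbf{L_1^{dm}},\dots,\mathbf{L_{10}^{dm}}$, $\mathbf{D_1}$, $\mathbf{D_2}$, $\mathbf{D_3}$, the first two arising as the two-element subalgebras of the others. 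Writing $\mathbf{W}$ for the subvariety of $\mathbf{RDMSH_1}$ defined by (1) and (2), it therefore suffices to identify which of these fifteen simples satisfy both (1) and (2).

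First I would handle (1). By the corollary stating that $\mathbf{V(L_7^{dm},L_8^{dm},L_9^{dm},L_{10}^{dm},D_1,D_2,D_3)}$ is defined, modulo $\mathbf{RDMSH_1}$, by $x\lor(x\to y)\approx x\lor[(x\to y)\to 1]$, identity (1) holds in $\mathbf{L_7^{dm}},\mathbf{L_8^{dm}},\mathbf{L_9^{dm}},\mathbf{L_{10}^{dm}},\mathbf{D_1},\mathbf{D_2},\mathbf{D_3}$ and fails in each of $\mathbf{L_1^{dm}},\dots,\mathbf{L_6^{dm}}$; a one-line check on the two-element $\to$-tables shows that (1) also holds in $\mathbf{2^e}$ and $\mathbf{\bar{2}^e}$. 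So the simple members of $\mathbf{RDMSH_1}$ satisfying (1) are exactly $\mathbf{2^e},\mathbf{\bar{2}^e},\mathbf{L_7^{dm}},\mathbf{L_8^{dm}},\mathbf{L_9^{dm}},\mathbf{L_{10}^{dm}},\mathbf{D_1},\mathbf{D_2},\mathbf{D_3}$.

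Next I would check (2), namely $x{^*}'\approx x^{**}$, on exactly these nine algebras. In each $\mathbf{L_i^{dm}}$ with $i\in\{7,8,9,10\}$ one reads off from Figure 1 that $a'=a$ and $a^*=a\to 0=0$, hence $a{^*}'=0'=1=0^*=a^{**}$, while the evaluations at $0$ and $1$ are immediate; so (2) holds in $\mathbf{L_7^{dm}},\mathbf{L_8^{dm}},\mathbf{L_9^{dm}},\mathbf{L_{10}^{dm}}$, and therefore in their two-element subalgebras $\mathbf{2^e}$ and $\mathbf{\bar{2}^e}$ as well. On the other hand, in each of $\mathbf{D_1},\mathbf{D_2},\mathbf{D_3}$ one has $a^*=a\to 0=b$ and $b^*=b\to 0=a$, so $a{^*}'=b'=b$ whereas $a^{**}=b^*=a$, and since $a\neq b$ identity (2) fails. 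Hence the simple members of $\mathbf{RDMSH_1}$ satisfying both (1) and (2) are precisely $\mathbf{2^e},\mathbf{\bar{2}^e},\mathbf{L_7^{dm}},\mathbf{L_8^{dm}},\mathbf{L_9^{dm}},\mathbf{L_{10}^{dm}}$. Since $\mathbf{\bar{2}^e}$ is (isomorphic to) a subalgebra of $\mathbf{L_9^{dm}}$, it contributes nothing new to the join, and we conclude $\mathbf{W}=\mathbf{V(2^e,L_7^{dm},L_8^{dm},L_9^{dm},L_{10}^{dm})}$; the reverse inclusion is exactly the fact, noted above, that each of these five generators satisfies (1) and (2).

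The main obstacle here is organizational rather than conceptual: one must be certain that the list of simple members of $\mathbf{RDMSH_1}$ is complete and correctly described — this rests on $\mathbf{RDMSH_1}=\mathbf{V(C_{10}^{dm})}\lor\mathbf{V(D_1,D_2,D_3)}$ and the descriptions in \cite{Sa14}, \cite{Sa14a}, and in particular on remembering that the two two-element algebras $\mathbf{2^e}$ and $\mathbf{\bar{2}^e}$ occur and must be treated — and one must read the $\to$-tables in Figure 1 carefully when computing $x^*$, $x{^*}'$ and $x^{**}$ in the relevant small algebras. Everything else is the now-routine ``separate the simples, then invoke the discriminator structure'' argument used for the other corollaries in this section.
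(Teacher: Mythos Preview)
Your proposal is correct and follows the same approach as the paper: the paper proves this corollary (and the surrounding ones) by saying the proofs are similar to that of Corollary 4.2, i.e., by checking which of the simple algebras of $\mathbf{RDMSH_1}$ listed in Corollary 4.1(a) satisfy the given identities. Your write-up is simply a more explicit version of that verification, including the correct observation that $\mathbf{\bar 2^e}$ is absorbed as a subalgebra of $\mathbf{L_9^{dm}}$ while $\mathbf{2^e}$ is not a subalgebra of any of $\mathbf{L_7^{dm}},\mathbf{L_8^{dm}},\mathbf{L_9^{dm}},\mathbf{L_{10}^{dm}}$ and so must be listed separately.
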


\begin{Corollary}
 The variety \\
 $\mathbf{V(2^e,  L_9^{dm},  L_{10}^{dm}})$ is defined, modulo $\mathbf{RDMSH1}$, by 
\begin{itemize}
\item[(1)]  $x \lor  (x \to y)  \approx  x \lor [(x \to y) \to 1]$
\item[(2)]  $x{^*}' \approx  x^{**}$
\item[(3)] $ (0 \to 1) \lor (0 \to 1)^* \approx 1$.
\end{itemize}
\end{Corollary}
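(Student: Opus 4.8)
The plan is to piggyback on the immediately preceding corollary, which identifies the subvariety of $\mathbf{RDMSH_1}$ axiomatized (modulo $\mathbf{RDMSH_1}$) by (1) and (2) as precisely $\mathbf{V(2^e, L_7^{dm}, L_8^{dm}, L_9^{dm}, L_{10}^{dm})}$. Granting that, the subvariety axiomatized by (1), (2) and (3) is simply the subvariety of $\mathbf{V(2^e, L_7^{dm}, L_8^{dm}, L_9^{dm}, L_{10}^{dm})}$ carved out by the single extra identity (3), so the whole problem reduces to deciding which members of this finitely generated variety satisfy (3).

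For that I would invoke the fact that $\mathbf{RDMSH_1}$, being a subvariety of the discriminator variety $\mathbf{DMSH_1}$, is itself a discriminator variety; hence each of its subvarieties is generated by its simple members, and in a discriminator variety the subdirectly irreducible algebras are exactly the simple ones. Thus the simple members of $\mathbf{V(2^e, L_7^{dm}, L_8^{dm}, L_9^{dm}, L_{10}^{dm})}$ are, up to isomorphism, the five listed generators together with the only proper nontrivial subalgebra occurring among them, namely $\mathbf{\bar{2}^e}$ (a subalgebra of both $\mathbf{L_9^{dm}}$ and $\mathbf{L_{10}^{dm}}$, which therefore contributes no new subvariety; the $3$-element generators with $0 \to 1 = a$ have no $2$-element subalgebra). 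So it is enough to test the identity (3) on these six small algebras.

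The last step is a routine verification. In $\mathbf{2^e}$ one has $0 \to 1 = 1$ and $1^* = 1 \to 0 = 0$; in each of $\mathbf{L_9^{dm}}$, $\mathbf{L_{10}^{dm}}$ and $\mathbf{\bar{2}^e}$ one has $0 \to 1 = 0$ and $0^* = 0 \to 0 = 1$; so in all these cases $(0 \to 1) \lor (0 \to 1)^* = 1$ and (3) holds. In $\mathbf{L_7^{dm}}$ and $\mathbf{L_8^{dm}}$, by contrast, $0 \to 1 = a$ and $a^* = a \to 0 = 0$, whence $(0 \to 1) \lor (0 \to 1)^* = a \neq 1$ and (3) fails. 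Consequently the simple members of the subvariety of $\mathbf{RDMSH_1}$ defined by (1), (2), (3) are exactly $\mathbf{2^e}$, $\mathbf{L_9^{dm}}$, $\mathbf{L_{10}^{dm}}$ and $\mathbf{\bar{2}^e}$; since $\mathbf{\bar{2}^e} \leq \mathbf{L_9^{dm}}$, that subvariety is $\mathbf{V(2^e, L_9^{dm}, L_{10}^{dm})}$. I do not expect a genuine obstacle; the only point that needs a little care — just as in the ``similar'' proofs of the preceding corollaries — is to make sure that imposing (3) does not resurrect any unlisted simple algebra or subalgebra of $\mathbf{RDMSH_1}$, and this is automatic from the discriminator-variety reduction, which from the outset confines attention to the finite list of simples of $\mathbf{RDMSH_1}$.
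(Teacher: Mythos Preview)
Your proof is correct and follows essentially the same approach as the paper, which (as indicated by ``The proofs of the following corollaries are similar'') simply verifies the given identities against the finite list of simples in $\mathbf{RDMSH_1}$ furnished by Corollary~4.1(a). The only difference is that you use the preceding corollary to narrow the list to six simples before testing identity~(3), whereas the paper's template checks all three identities against the full list of simples in $\mathbf{RDMSH_1}$; this is a harmless efficiency and not a genuinely different route.
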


\begin{Corollary}
 The variety \\
 $\mathbf{V(L_9^{dm},  L_{10}^{dm}})$ is defined, modulo 
 $\mathbf{RDMSH1}$, by 
\begin{itemize}
\item[(1)]  $x \lor  (x \to y)  \approx  x \lor [(x \to y) \to 1]$
\item[(2)]  $x{^*}' \approx  x^{**}$
\item[(3)] $ (0 \to 1)^{*} \approx 1$.
\end{itemize}
\end{Corollary}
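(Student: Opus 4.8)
The plan is to argue entirely at the level of simple algebras, exactly as in the preceding corollaries. Since $\mathbf{DMSH_1}$ (hence $\mathbf{RDMSH_1}$) is a discriminator variety, it is congruence-distributive and semisimple, so every member is a subdirect product of simple algebras; consequently each subvariety $\mathbf{V}\subseteq\mathbf{RDMSH_1}$ is generated by the simple algebras it contains, and two such subvarieties coincide precisely when they contain the same simple algebras up to isomorphism. Since $\mathbf{RDMSH_1}=\mathbf{V(C_{10}^{dm})}\lor\mathbf{V(D_1,D_2,D_3)}$ (established above), J\'onsson's Lemma shows that the simple algebras of $\mathbf{RDMSH_1}$ are, up to isomorphism, exactly the nontrivial subalgebras of $\mathbf{L_1^{dm}},\dots,\mathbf{L_{10}^{dm}},\mathbf{D_1},\mathbf{D_2},\mathbf{D_3}$; concretely, these are the thirteen listed algebras together with $\mathbf{2^e}$ and $\mathbf{\bar{2}^e}$.

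First I would check that $\mathbf{L_9^{dm}}$ and $\mathbf{L_{10}^{dm}}$ satisfy (1), (2) and (3), which gives $\mathbf{V(L_9^{dm},L_{10}^{dm})}\subseteq\mathbf{W}$, where $\mathbf{W}$ is the subvariety of $\mathbf{RDMSH_1}$ defined modulo $\mathbf{RDMSH_1}$ by (1), (2), (3). For (3): $0\to 1=0$ in both algebras, so $(0\to 1)^*=0^*=1$. For (2): a direct reading of the tables gives $x{^*}'=x^{**}$ for each $x\in\{0,a,1\}$. For (1): in $\mathbf{L_{10}^{dm}}$ one has $z\to 1=z$ for every element $z$, so (1) holds trivially; in $\mathbf{L_9^{dm}}$ the only element on which $z\mapsto(z\to 1)$ fails to be the identity is $a$, and $a$ occurs as a value $x\to y$ only when $x=1$, where $x\lor a=1=x\lor(a\to 1)$.

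Next I would run through the remaining simple algebras and point to a failing identity among (1)--(3) in each. Identity (3) fails in $\mathbf{L_1^{dm}},\dots,\mathbf{L_8^{dm}}$, in $\mathbf{D_2}$, $\mathbf{D_3}$, and in $\mathbf{2^e}$, because there $0\to 1\in\{1,a\}$, whence $(0\to 1)^*=(0\to 1)\to 0\in\{0,b\}\ne 1$; and identity (2) fails in $\mathbf{D_1}$, since $a{^*}'=b'=b\ne a=a^{**}$. Hence the only simple algebras of $\mathbf{RDMSH_1}$ satisfying all of (1), (2), (3) are $\mathbf{L_9^{dm}}$, $\mathbf{L_{10}^{dm}}$, $\mathbf{\bar{2}^e}$, and the trivial algebra; since $\mathbf{\bar{2}^e}$ embeds into $\mathbf{L_9^{dm}}$, each of them lies in $\mathbf{V(L_9^{dm},L_{10}^{dm})}$. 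Thus $\mathbf{W}$ and $\mathbf{V(L_9^{dm},L_{10}^{dm})}$ contain the same simple algebras, so $\mathbf{W}=\mathbf{V(L_9^{dm},L_{10}^{dm})}$, which is the asserted equality.

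The main obstacle is purely computational bookkeeping: checking (1), (2), (3) in $\mathbf{L_9^{dm}}$ and $\mathbf{L_{10}^{dm}}$ and producing a failing instance in each of the other finitely many simple algebras, while making sure the enumeration of the simple algebras of $\mathbf{RDMSH_1}$ is complete — in particular that no relevant subalgebra of the $\mathbf{L_i^{dm}}$ or $\mathbf{D_j}$ beyond $\mathbf{2^e}$ and $\mathbf{\bar{2}^e}$ has been overlooked, and that each surviving simple algebra genuinely belongs to $\mathbf{V(L_9^{dm},L_{10}^{dm})}$. No conceptual difficulty beyond this is anticipated.
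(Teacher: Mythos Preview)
Your proposal is correct and follows essentially the same approach as the paper: the paper proves Corollary 4.2 by verifying the defining identity on the explicit list of simples in $\mathbf{RDMSH_1}$ (coming from Corollary 4.1(a)), then states that the proofs of the subsequent corollaries, including this one, are similar. Your argument is exactly this verification carried out in detail, including the observation that $\mathbf{\bar 2^e}$ survives the identities but already lies in $\mathbf{V(L_9^{dm},L_{10}^{dm})}$ as a subalgebra.
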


\begin{Corollary}
 The variety \\
 $\mathbf{V(L_1^{dm}, L_2^{dm}, L_3^{dm}, L_4^{dm}, L_5^{dm}, L_6^{dm}, L_7^{dm}, L_8^{dm}})$ is defined, modulo \\
 $\mathbf{RDMSH1}$, by 
\begin{itemize}
\item[(1)] $x{^*}' \approx  x^{**}$
\item[(2)] $ (0 \to 1)^{**} \approx 1$.
\end{itemize}
\end{Corollary}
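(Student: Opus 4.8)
The plan is to exploit the fact that $\mathbf{RDMSH_1}$, being a subvariety of the discriminator variety $\mathbf{DMSH_1}$, is itself a discriminator variety, so that each of its subvarieties is generated by its simple members. By part~(a) of the first Corollary of this section we have $\mathbf{RDMSH_1} = \mathbf{V(C_{10}^{dm})} \lor \mathbf{V(D_1, D_2, D_3)}$; since this variety is generated by finitely many finite algebras, its simple members are, up to isomorphism, precisely the nontrivial subalgebras of $\mathbf{L_1^{dm}}, \dots, \mathbf{L_{10}^{dm}}, \mathbf{D_1}, \mathbf{D_2}, \mathbf{D_3}$, and a quick inspection of the $3$- and $4$-element tables of Figure~1 shows that these are exactly $\mathbf{2^e}$, $\mathbf{\bar{2}^e}$, $\mathbf{L_1^{dm}}, \dots, \mathbf{L_{10}^{dm}}$, $\mathbf{D_1}$, $\mathbf{D_2}$, $\mathbf{D_3}$. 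Let $\mathbf{W}$ denote the subvariety of $\mathbf{RDMSH_1}$ defined by $(1)$ and $(2)$. Being a subvariety of a discriminator variety, $\mathbf{W}$ is generated by its simple members, which are precisely those algebras in the list above that satisfy both $(1)$ and $(2)$; so it remains only to identify that set.

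I would then carry out the verification directly from Figure~1. In each $\mathbf{L_i^{dm}}$ one has $0^{*} = 1$ and $a^{*} = 1^{*} = 0$, so both $x^{**}$ and $x^{*\prime}$ send $0 \mapsto 0$, $a \mapsto 1$, $1 \mapsto 1$; hence $(1)$ holds in every $\mathbf{L_i^{dm}}$, and, similarly, in $\mathbf{2^e}$. For $i = 1, \dots, 8$ we have $0 \to 1 \in \{a, 1\}$ and therefore $(0 \to 1)^{**} = 1$, so $(2)$ holds in $\mathbf{L_1^{dm}}, \dots, \mathbf{L_8^{dm}}$ and in $\mathbf{2^e}$; this already gives $\mathbf{V}(\mathbf{L_1^{dm}}, \dots, \mathbf{L_8^{dm}}) \subseteq \mathbf{W}$, and $\mathbf{2^e}$, being (isomorphic to) a subalgebra of $\mathbf{L_1^{dm}}$, contributes nothing new. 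Conversely, in each of $\mathbf{D_1}, \mathbf{D_2}, \mathbf{D_3}$ we have $a^{*} = b$, whence $a^{**} = b^{*} = a$ but $a^{*\prime} = b' = b \neq a$, so $(1)$ fails; and in $\mathbf{L_9^{dm}}$, $\mathbf{L_{10}^{dm}}$ and $\mathbf{\bar{2}^e}$ we have $0 \to 1 = 0$, so that $(0 \to 1)^{**} = 0^{**} = 1^{*} = 0 \neq 1$ and $(2)$ fails. Consequently the simple members of $\mathbf{W}$ are exactly $\mathbf{2^e}, \mathbf{L_1^{dm}}, \dots, \mathbf{L_8^{dm}}$, and hence $\mathbf{W} = \mathbf{V}(\mathbf{2^e}, \mathbf{L_1^{dm}}, \dots, \mathbf{L_8^{dm}}) = \mathbf{V}(\mathbf{L_1^{dm}}, \dots, \mathbf{L_8^{dm}})$, which is the assertion.

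None of the arithmetic above is hard; once the $\to$ and $'$ tables of Figure~1 are read off correctly, the identities can be checked by inspection. The only step that genuinely needs care --- and the sole place where a misstep would break the argument --- is justifying that the displayed list of simple members of $\mathbf{RDMSH_1}$ is complete and closed under subalgebras, so that no stray simple algebra satisfying $(1)$ and $(2)$ is missed; this rests on part~(a) of the first Corollary of this section together with the discriminator structure of $\mathbf{RDMSH_1}$, exactly as in the proofs of the preceding corollaries.
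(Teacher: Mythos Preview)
Your argument is correct and is exactly the approach the paper intends: after Corollary~4.2 the paper says ``The proofs of the following corollaries are similar,'' meaning one simply checks, against the known list of simples of $\mathbf{RDMSH_1}$ supplied by Corollary~4.1(a) (equivalently, the De~Morgan part of Theorem~5.1), which of them satisfy the two identities. Your verifications of (1) and (2) on $\mathbf{2^e}$, $\mathbf{\bar{2}^e}$, $\mathbf{L_1^{dm}},\dots,\mathbf{L_{10}^{dm}}$, $\mathbf{D_1}$, $\mathbf{D_2}$, $\mathbf{D_3}$ are accurate, and the observation that $\mathbf{2^e}$ is already a subalgebra of $\mathbf{L_1^{dm}}$ correctly disposes of the one extra simple that survives.
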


\begin{Corollary}
 The variety $\mathbf{V(L_1^{dm}, L_2^{dm}, L_3^{dm}, L_4^{dm}, D_2})$ is \\ defined, modulo $\mathbf{RDMSH1}$, by 
\begin{itemize}
\item[(1)]  $(0 \to 1) \lor (0 \to 1)^* \approx 1$
\item[(2)] $ (0 \to 1)^{**} \approx 1$.
\end{itemize}
\end{Corollary}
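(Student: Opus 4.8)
The plan is to argue exactly as in the earlier corollaries of this section, using that $\mathbf{RDMSH_1}$, being a subvariety of the discriminator variety $\mathbf{DMSH_1}$, is itself a discriminator variety; consequently every subvariety of $\mathbf{RDMSH_1}$ is generated by its simple members, and two subvarieties coincide iff they have the same simple members up to isomorphism. By Corollary (a) together with a routine inspection of the subalgebras of its generators, the simple members of $\mathbf{RDMSH_1}$ are, up to isomorphism, the thirteen algebras $\mathbf{L_1^{dm}}, \dots, \mathbf{L_{10}^{dm}}, \mathbf{D_1}, \mathbf{D_2}, \mathbf{D_3}$ together with their two two-element subalgebras $\mathbf{2^e}$ and $\mathbf{\bar{2}^e}$; so I would reduce the corollary to determining exactly which of these fifteen simple algebras satisfy both (1) and (2).

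First I would carry out the verification, which is a finite table computation since $0 \to 1$ is a constant term: in each simple algebra put $c := 0 \to 1$ and evaluate $c^*$, $c^{**}$ and $c \lor c^*$. For $\mathbf{L_1^{dm}}, \dots, \mathbf{L_4^{dm}}$, for $\mathbf{D_2}$, and for $\mathbf{2^e}$ one has $c = 1$, hence $c^* = 1 \to 0 = 0$, $c \lor c^* = 1$ and $c^{**} = 0 \to 0 = 1$, so (1) and (2) both hold in these six algebras; this already gives $\mathbf{V(L_1^{dm}, L_2^{dm}, L_3^{dm}, L_4^{dm}, D_2)} \subseteq \mathbf{V}$, writing $\mathbf{V}$ for the subvariety of $\mathbf{RDMSH_1}$ defined by (1) and (2). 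For the remaining nine simples I would record the failures: in $\mathbf{L_5^{dm}}, \dots, \mathbf{L_8^{dm}}$ one has $c = a$ with $a^* = a \to 0 = 0$, so $c \lor c^* = a \ne 1$ and (1) fails; in $\mathbf{L_9^{dm}}, \mathbf{L_{10}^{dm}}, \mathbf{D_1}$ and $\mathbf{\bar{2}^e}$ one has $c = 0$, so $c^* = 0 \to 0 = 1$ and $c^{**} = 1 \to 0 = 0 \ne 1$ and (2) fails; and in $\mathbf{D_3}$ one has $c = a$ with $a^* = b$, so $c \lor c^* = a \lor b = 1$ while $c^{**} = b^* = b \to 0 = a \ne 1$, so (2) fails.

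Then I would conclude: the simple members of $\mathbf{V}$ are exactly $\mathbf{L_1^{dm}}, \mathbf{L_2^{dm}}, \mathbf{L_3^{dm}}, \mathbf{L_4^{dm}}, \mathbf{D_2}$ and their common two-element subalgebra $\mathbf{2^e}$, and these are also precisely the simple members of $\mathbf{V(L_1^{dm}, L_2^{dm}, L_3^{dm}, L_4^{dm}, D_2)}$ (note that $\mathbf{\bar{2}^e}$ is not a subalgebra of any of the five generators, since each of them satisfies $0 \to 1 = 1$); since two subvarieties of a discriminator variety with the same simple members are equal, this yields $\mathbf{V} = \mathbf{V(L_1^{dm}, L_2^{dm}, L_3^{dm}, L_4^{dm}, D_2)}$, as claimed. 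I expect the only delicate point — the main obstacle, such as it is — to be ensuring that the two-element simples $\mathbf{2^e}$ and $\mathbf{\bar{2}^e}$ are included in the enumeration: it is precisely the failure of (2) in $\mathbf{\bar{2}^e}$ that prevents the subvariety cut out by (1)–(2) from being strictly larger than $\mathbf{V(L_1^{dm}, L_2^{dm}, L_3^{dm}, L_4^{dm}, D_2)}$, and everything else is a routine lookup in the tables of Figure 1.
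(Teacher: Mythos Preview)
Your proposal is correct and follows exactly the route indicated in the paper: one checks identities (1) and (2) against the finite list of simple members of $\mathbf{RDMSH_1}$ furnished by Corollary (a), and both sides coincide because the surviving simples are precisely $\mathbf{L_1^{dm}},\ldots,\mathbf{L_4^{dm}},\mathbf{D_2}$ together with their common subalgebra $\mathbf{2^e}$. Your explicit treatment of the two-element simples $\mathbf{2^e}$ and $\mathbf{\bar{2}^e}$ is a welcome bit of care that the paper leaves implicit.
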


\begin{Corollary}
 The variety $\mathbf{V(L_1^{dm}, L_3^{dm}, D_1, D_2, D_3})$ is \\
 defined, modulo $\mathbf{RDMSH1}$, by 
\begin{itemize}
\item[(1)]  $x \lor (y \to x)  \approx (x \lor y) \to x$
\item[(2)] $(0 \to 1) \lor (0 \to 1)^* \approx 1 $.
\end{itemize}
\end{Corollary}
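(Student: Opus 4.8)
The plan is to reduce the statement, exactly as in the analogous corollaries above, to a finite computation with the operation tables of Figure~1. Recall that $\mathbf{DMSH_1}$, and hence its subvariety $\mathbf{RDMSH_1}$, is a discriminator variety; consequently every subvariety of $\mathbf{RDMSH_1}$ is generated by the simple algebras it contains, and (since in a discriminator variety subalgebras of simple algebras are simple, together with part~(a) of the Corollary above which states $\mathbf{RDMSH_1} = \mathbf{V(C_{10}^{dm})} \lor \mathbf{V(D_1, D_2, D_3)}$) the simple members of $\mathbf{RDMSH_1}$ are, up to isomorphism, the thirteen algebras $\mathbf{L_1^{dm}}, \dots, \mathbf{L_{10}^{dm}}, \mathbf{D_1}, \mathbf{D_2}, \mathbf{D_3}$ together with their two-element subalgebras $\mathbf{2^e}$ and $\mathbf{\bar 2^e}$. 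Hence the subvariety $\mathbf V$ of $\mathbf{RDMSH_1}$ cut out by the identities (1) and (2) is the variety generated by precisely those algebras on this list that satisfy both identities, and everything comes down to deciding, algebra by algebra, which these are.

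First I would check that each of $\mathbf{L_1^{dm}}, \mathbf{L_3^{dm}}, \mathbf{D_1}, \mathbf{D_2}, \mathbf{D_3}$ satisfies (1) and (2). For (2) this is a single computation of $0 \to 1$ and its pseudocomplement in each algebra: one reads off $0 \to 1 \in \{0,1\}$ in $\mathbf{L_1^{dm}}, \mathbf{L_3^{dm}}, \mathbf{D_2}$, $\ 0 \to 1 = 0$ in $\mathbf{D_1}$, and $0 \to 1 = a$ with $a^* = b$ in $\mathbf{D_3}$, so in every case $(0 \to 1) \lor (0 \to 1)^* = 1$. For (1) one runs through the pairs $(x,y)$ using Figure~1: in the three-element chains $\mathbf{L_1^{dm}}$ and $\mathbf{L_3^{dm}}$ both sides reduce to $1$ when $y \leq x$ (the right-hand side by (SH3)), leaving only the three pairs with $x < y$ to inspect, and in the four-element Boolean algebras $\mathbf{D_1}, \mathbf{D_2}, \mathbf{D_3}$ the identity is immediate for $x \in \{0,1\}$ and for $x \in \{a,b\}$ reduces to a handful of table look-ups. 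I would also record that $\mathbf{2^e}$ and $\mathbf{\bar 2^e}$ satisfy (1) and (2) but, being subalgebras of $\mathbf{D_2}$ and $\mathbf{D_1}$ respectively, they contribute nothing new to the generated variety.

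Second I would show that each of the remaining simple algebras $\mathbf{L_2^{dm}}, \mathbf{L_4^{dm}}, \mathbf{L_5^{dm}}, \mathbf{L_6^{dm}}, \mathbf{L_7^{dm}}, \mathbf{L_8^{dm}}, \mathbf{L_9^{dm}}, \mathbf{L_{10}^{dm}}$ violates (1) or (2). For $\mathbf{L_5^{dm}}, \mathbf{L_6^{dm}}, \mathbf{L_7^{dm}}, \mathbf{L_8^{dm}}$ one has $0 \to 1 = a$ and $a \to 0 = 0$, so $(0 \to 1) \lor (0 \to 1)^* = a \neq 1$ and (2) fails. For $\mathbf{L_2^{dm}}, \mathbf{L_4^{dm}}, \mathbf{L_9^{dm}}, \mathbf{L_{10}^{dm}}$ one substitutes $x = a$, $y = 0$ into (1): the right-hand side is $(a \lor 0) \to a = a \to a = 1$, while the left-hand side is $a \lor (0 \to a)$, which is $a \lor a = a$ in $\mathbf{L_2^{dm}}, \mathbf{L_4^{dm}}$ and $a \lor 0 = a$ in $\mathbf{L_9^{dm}}, \mathbf{L_{10}^{dm}}$; either way it equals $a \neq 1$, so (1) fails. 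Combining the two steps yields $\mathbf V = \mathbf{V(L_1^{dm}, L_3^{dm}, D_1, D_2, D_3)}$.

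There is no genuine obstacle once the discriminator-variety reduction is invoked: the proof is a bookkeeping exercise on Figure~1. The one point needing a little care is at the very start, namely making sure the catalogue of simple members of $\mathbf{RDMSH_1}$ is complete---that the thirteen generating algebras have no proper nontrivial subalgebras beyond $\mathbf{2^e}$ and $\mathbf{\bar 2^e}$, and that these two already sit inside $\mathbf{V(D_2)}$ and $\mathbf{V(D_1)}$---so that no stray simple algebra satisfying (1) and (2) could lie outside $\mathbf{V(L_1^{dm}, L_3^{dm}, D_1, D_2, D_3)}$.
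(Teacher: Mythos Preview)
Your proposal is correct and follows exactly the approach the paper uses: by Corollary~4.1(a) the simples in $\mathbf{RDMSH_1}$ are (up to isomorphism) the algebras $\mathbf{2^e}, \mathbf{\bar 2^e}, \mathbf{L_1^{dm}},\dots,\mathbf{L_{10}^{dm}},\mathbf{D_1},\mathbf{D_2},\mathbf{D_3}$, and one simply verifies from the tables in Figure~1 which of these satisfy (1) and (2). Your explicit computations, including the witnesses $x=a$, $y=0$ for the failure of (1) in $\mathbf{L_2^{dm}},\mathbf{L_4^{dm}},\mathbf{L_9^{dm}},\mathbf{L_{10}^{dm}}$ and the failure of (2) in $\mathbf{L_5^{dm}},\dots,\mathbf{L_8^{dm}}$, are all correct.
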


\begin{Corollary}
 The variety $\mathbf{V(L_1^{dm},  L_3^{dm}, D_2})$ is defined, modulo $\mathbf{RDMSH1}$, by 
\begin{itemize}
\item[(1)]  $x \lor (y \to x)  \approx (x \lor y) \to x$
\item[(2)]  $(0 \to 1)^{**}=1$
\item[(3)] $(0 \to 1) \lor (0 \to 1)^* \approx 1 $.
\end{itemize}
\end{Corollary}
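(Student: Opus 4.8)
The plan is to follow the template used above for the pseudocommutativity corollary. By part~(a) of the first corollary of this section we have $\mathbf{RDMSH_1} = \mathbf{V(C_{10}^{dm})} \lor \mathbf{V(D_1, D_2, D_3)}$, and, being a subvariety of the discriminator variety $\mathbf{DMSH_1}$, $\mathbf{RDMSH_1}$ is itself a discriminator variety; hence each of its subvarieties is generated by the simple algebras it contains, and two subvarieties coincide iff they contain the same simple algebras. Up to isomorphism the simple members of $\mathbf{RDMSH_1}$ are the thirteen algebras $\mathbf{L}_1^{dm}, \dots, \mathbf{L}_{10}^{dm}, \mathbf{D_1}, \mathbf{D_2}, \mathbf{D_3}$ together with the two-element algebras $\mathbf{2^e} \le \mathbf{L}_1^{dm}$ and $\mathbf{\bar 2^e} \le \mathbf{L}_9^{dm}$; since the last two are subalgebras of members of the first list they contribute nothing to generated varieties, so it suffices to determine which of the thirteen displayed algebras satisfy $(1)$, $(2)$ and $(3)$, and to show that these are exactly $\mathbf{L}_1^{dm}, \mathbf{L}_3^{dm}, \mathbf{D_2}$.

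For the easy direction I would check that $\mathbf{L}_1^{dm}$, $\mathbf{L}_3^{dm}$ and $\mathbf{D_2}$ satisfy all three identities. In each of these $0 \to 1 = 1$ (read off Figure~1), so $(0\to 1)^{**} = 1^{**} = 1$ and $(0\to 1)\lor(0\to 1)^* = 1\lor 0 = 1$, giving $(2)$ and $(3)$. The identity $x \lor (y \to x) \approx (x \lor y) \to x$ holds automatically when $x = 0$ or $x = 1$ in any semi-Heyting algebra, and a direct check of the remaining cases ($x = a$ in $\mathbf{L}_1^{dm}$ and $\mathbf{L}_3^{dm}$, $x \in \{a,b\}$ in $\mathbf{D_2}$) confirms $(1)$ in all three.

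For the converse I would eliminate the other ten algebras by exhibiting in each a failure of $(1)$, $(2)$ or $(3)$. Identity $(2)$ fails in $\mathbf{L}_9^{dm}$, $\mathbf{L}_{10}^{dm}$ and $\mathbf{D_1}$ (there $0 \to 1 = 0$, so $(0\to 1)^{**} = 0^{**} = 1^* = 0 \ne 1$) and in $\mathbf{D_3}$ (there $0\to 1 = a$ and $a^{**} = b^* = a \ne 1$). Identity $(3)$ fails in $\mathbf{L}_5^{dm}$, $\mathbf{L}_6^{dm}$, $\mathbf{L}_7^{dm}$, $\mathbf{L}_8^{dm}$, since in each of these $0 \to 1 = a$ while $a^* = a \to 0 = 0$, whence $(0\to 1)\lor(0\to 1)^* = a \ne 1$. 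Finally $(1)$ fails in $\mathbf{L}_2^{dm}$ and $\mathbf{L}_4^{dm}$: putting $x = a$, $y = 0$ gives $x\lor(y\to x) = a\lor(0\to a) = a\lor a = a$, whereas $(x\lor y)\to x = a \to a = 1$. This exhausts the thirteen simple algebras, so the subvariety of $\mathbf{RDMSH_1}$ defined by $(1)$, $(2)$, $(3)$ is generated by $\{\mathbf{L}_1^{dm},\mathbf{L}_3^{dm},\mathbf{D_2}\}$ (and $\mathbf{2^e}\le\mathbf{L}_1^{dm}$), i.e. it equals $\mathbf{V(L_1^{dm}, L_3^{dm}, D_2})$.

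The argument is conceptually routine; the only real work — and the only place an error could slip in — is the bookkeeping of reading $0 \to 1$, $a^*$ and the relevant table entries correctly for each of the twelve algebras of Figure~1, keeping in mind that in $\mathbf{L}_i^{dm}$ the dual De Morgan operation fixes the atom $a$, so that the quantity entering $(3)$ is $a^* = a \to 0$, and that what distinguishes $\mathbf{L}_1^{dm},\mathbf{L}_3^{dm}$ from $\mathbf{L}_2^{dm},\mathbf{L}_4^{dm}$ for identity $(1)$ is precisely the value of $0 \to a$.
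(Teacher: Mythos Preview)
Your proof is correct and follows exactly the template the paper intends: by Corollary~4.1(a) the simples of $\mathbf{RDMSH_1}$ are (up to isomorphism) the $\mathbf{L}_i^{dm}$ and the $\mathbf{D}_j$, together with the two-element subalgebras $\mathbf{2^e}$ and $\mathbf{\bar 2^e}$, and it suffices to check which of these satisfy the three identities. Your case-by-case verification is accurate; the paper itself gives no further detail beyond ``the proofs of the following corollaries are similar.''
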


\begin{Corollary}
 The variety $\mathbf{V(L_1^{dm}, L_2^{dm}, L_8^{dm}, D_1, D_2, D_3})$ is defined, modulo $\mathbf{RDMSH1}$, by 
\begin{itemize}
\item[ ]  $y \lor (y \to (x \lor  y)) \approx (0 \to x) \lor (x \to y)$.
\end{itemize}
\end{Corollary}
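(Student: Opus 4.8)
The plan is to proceed exactly as in the preceding corollaries, relying on the fact that $\mathbf{RDMSH_1}$, being a subvariety of the discriminator variety $\mathbf{DMSH_1}$, is itself a discriminator variety, hence congruence-distributive and semisimple; consequently every subvariety of $\mathbf{RDMSH_1}$ is generated by the simple algebras it contains, and two such subvarieties coincide precisely when they have the same simple members. By the Corollary above asserting $\mathbf{RDMSH_1} = \mathbf{V(C^{dm}_{10})} \lor \mathbf{V(D_1, D_2, D_3)}$, together with the description of simple algebras in \cite{Sa14}, the simple members of $\mathbf{RDMSH_1}$ are $\mathbf{L_1^{dm}}, \dots, \mathbf{L_{10}^{dm}}, \mathbf{D_1}, \mathbf{D_2}, \mathbf{D_3}$ and their two-element subalgebras $\mathbf{2^e}$ and $\mathbf{\bar{2}^e}$. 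Writing $\varepsilon$ for the identity $y \lor (y \to (x \lor y)) \approx (0 \to x) \lor (x \to y)$, the subvariety of $\mathbf{RDMSH_1}$ axiomatized by $\varepsilon$ is the join of the varieties generated by those simples in which $\varepsilon$ holds; so it suffices to show that $\varepsilon$ holds in $\mathbf{L_1^{dm}}, \mathbf{L_2^{dm}}, \mathbf{L_8^{dm}}, \mathbf{D_1}, \mathbf{D_2}, \mathbf{D_3}$ and fails in each of $\mathbf{L_3^{dm}}, \mathbf{L_4^{dm}}, \mathbf{L_5^{dm}}, \mathbf{L_6^{dm}}, \mathbf{L_7^{dm}}, \mathbf{L_9^{dm}}, \mathbf{L_{10}^{dm}}$. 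The two-element algebras then need no separate treatment: $\mathbf{2^e} \leq \mathbf{L_1^{dm}}$ and $\mathbf{\bar{2}^e} \leq \mathbf{D_1}$, so $\varepsilon$ is inherited by both, and both already lie in $\mathbf{V(L_1^{dm}, L_2^{dm}, L_8^{dm}, D_1, D_2, D_3)}$.

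Since $\varepsilon$ does not mention $'$, only the semi-Heyting reducts of Figure 1 are relevant, and both halves are finite verifications: for the three-element algebras there are nine pairs $(x, y)$ and for the four-element ones sixteen, with each side evaluated directly from the $\to$-tables (using that $0 \to x$ depends only on $x$, that $y \leq y \lor (y \to (x \lor y))$ always, and that $x \to 1 = 1$). For the positive part, in $\mathbf{L_1^{dm}}$ and $\mathbf{D_2}$ both sides of $\varepsilon$ reduce identically to $1$, while in $\mathbf{L_2^{dm}}, \mathbf{L_8^{dm}}, \mathbf{D_1}$ and $\mathbf{D_3}$ a short entry-by-entry comparison confirms it. For the negative part, I would exhibit a single falsifying assignment per algebra: taking $x = 1, y = a$ falsifies $\varepsilon$ in $\mathbf{L_3^{dm}}, \mathbf{L_4^{dm}}, \mathbf{L_5^{dm}}, \mathbf{L_6^{dm}}, \mathbf{L_9^{dm}}$, and taking $x = a, y = 1$ falsifies it in $\mathbf{L_7^{dm}}$ and $\mathbf{L_{10}^{dm}}$; in each of these cases the two sides of $\varepsilon$ evaluate to the distinct elements $a$ and $1$, in one order or the other. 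As each $\mathbf{L_i^{dm}}$ is simple and the class cut out by $\mathbf{RDMSH_1}$ together with $\varepsilon$ is a variety, a single such witness suffices to keep $\mathbf{L_i^{dm}}$ out of it.

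Putting the two parts together, the variety defined by $\mathbf{RDMSH_1}$ and $\varepsilon$ has exactly $\mathbf{L_1^{dm}}, \mathbf{L_2^{dm}}, \mathbf{L_8^{dm}}, \mathbf{D_1}, \mathbf{D_2}, \mathbf{D_3}$ (and their subalgebras $\mathbf{2^e}, \mathbf{\bar{2}^e}$) as its simple members, hence by the discriminator-variety principle recalled at the outset it equals $\mathbf{V(L_1^{dm}, L_2^{dm}, L_8^{dm}, D_1, D_2, D_3)}$, as claimed. Conceptually nothing here is difficult once the semisimplicity and Jónsson machinery, already available for $\mathbf{RDMSH_1}$, is in hand; the one step that genuinely demands care is the bookkeeping in the thirteen three- and four-element algebras, where a single misread table entry would derail the argument, so I would carry out the check by tabulating $(0 \to x) \lor (x \to y)$ against $y \lor (y \to (x \lor y))$ uniformly across all of them and confirming that the two tables agree exactly for the six listed algebras and disagree somewhere for each of the other seven.
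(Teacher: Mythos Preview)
Your proposal is correct and follows exactly the approach the paper uses (and states explicitly after Corollary~4.2): since $\mathbf{RDMSH_1}$ is a discriminator variety with the finite list of simples given by Corollary~4.1(a), one simply checks which of those simples satisfy the displayed identity, and your falsifying assignments and positive verifications all check out. One small caution: your parenthetical remark that ``$x \to 1 = 1$'' holds is false in general semi-Heyting algebras (e.g.\ $a \to 1 = a$ in $\mathbf{L_3}, \mathbf{L_4}, \mathbf{L_7}, \mathbf{L_8}, \mathbf{L_{10}}$), so do not use it as a shortcut---stick to the direct tabulation you describe at the end.
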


\begin{Corollary}
 The variety $\mathbf{V(L_8^{dm}, D_1, D_2, D_3)}$ is defined, modulo $\mathbf{RDMSH1}$, by 
\begin{itemize}
\item[ ]  $x \lor [y \to (0 \to (y \to x))] \approx x \lor y \lor (y \to x)$.
\end{itemize}
\end{Corollary}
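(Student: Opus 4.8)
The plan is to follow the pattern of the preceding corollaries. Write $(\dagger)$ for the identity $x \lor [y \to (0 \to (y \to x))] \approx x \lor y \lor (y \to x)$, and observe that $(\dagger)$ involves only $\lor$, $\to$ and $0$, so it is really an identity of the underlying semi-Heyting algebras. Since $\mathbf{DMSH_1}$, and hence $\mathbf{RDMSH_1}$, is a discriminator variety, it is semisimple and congruence-distributive; and by part~(a) of the first corollary of this section, $\mathbf{RDMSH_1} = \mathbf{V(C_{10}^{dm})} \lor \mathbf{V(D_1, D_2, D_3)}$. Consequently the simple $\mathbf{RDMSH_1}$-algebras are, up to isomorphism, exactly the $\mathbf{L_i^{dm}}$ $(1 \le i \le 10)$, $\mathbf{D_1}$, $\mathbf{D_2}$, $\mathbf{D_3}$, together with the two-element algebras $\mathbf{2^e}$ and $\mathbf{\bar{2}^e}$ (subalgebras of $\mathbf{D_2}$ and $\mathbf{D_1}$, respectively); every subvariety of $\mathbf{RDMSH_1}$ is generated by the simple algebras it contains, and an identity holds throughout a subvariety iff it holds in each of those simples. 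Hence it suffices to show that the simple $\mathbf{RDMSH_1}$-algebras satisfying $(\dagger)$ are precisely $\mathbf{L_8^{dm}}$, $\mathbf{D_1}$, $\mathbf{D_2}$, $\mathbf{D_3}$ (a possible occurrence of $\mathbf{2^e}$ or $\mathbf{\bar{2}^e}$ among them being harmless, since these already belong to $\mathbf{V(D_1, D_2, D_3)}$).

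First I would verify that $(\dagger)$ holds in $\mathbf{L_8^{dm}}$ and in each of $\mathbf{D_1}$, $\mathbf{D_2}$, $\mathbf{D_3}$ by evaluating both sides against the $\to$-tables of Figure~1 over all assignments of $x$ and $y$ ($9$ pairs for $\mathbf{L_8^{dm}}$ and $16$ for each $\mathbf{D_j}$). The bookkeeping collapses quickly: both sides equal $1$ whenever $x = 1$; in each $\mathbf{D_j}$ one checks that, in fact, both sides equal $1$ for every assignment; and in $\mathbf{L_8^{dm}}$ only the few pairs with $y \to x \in \{a, 1\}$ call for a line of arithmetic. Since $(\dagger)$ does not mention $'$, it then also holds in $\mathbf{2^e}$ and $\mathbf{\bar{2}^e}$.

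Next I would exhibit, for each of the nine remaining three-element simples $\mathbf{L_i^{dm}}$ with $i \in \{1, \dots, 7, 9, 10\}$, an assignment refuting $(\dagger)$. The pair $x := 0$, $y := a$ separates the two sides in $\mathbf{L_1^{dm}}$, $\mathbf{L_2^{dm}}$, $\mathbf{L_5^{dm}}$ and $\mathbf{L_6^{dm}}$ (the left side reduces to $1$, the right side to $a$); the pair $x := a$, $y := a$ does so in $\mathbf{L_3^{dm}}$, $\mathbf{L_4^{dm}}$, $\mathbf{L_9^{dm}}$ and $\mathbf{L_{10}^{dm}}$ (left side $a$, right side $1$); and $x := a$, $y := 1$ does so in $\mathbf{L_7^{dm}}$ (left side $a$, right side $1$) --- in each case read off directly from Figure~1. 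Combining the two computations, the simple $\mathbf{RDMSH_1}$-algebras satisfying $(\dagger)$ are exactly $\mathbf{L_8^{dm}}$, $\mathbf{D_1}$, $\mathbf{D_2}$, $\mathbf{D_3}$ (and possibly $\mathbf{2^e}$, $\mathbf{\bar{2}^e}$), and therefore the subvariety of $\mathbf{RDMSH_1}$ axiomatized by $(\dagger)$ is $\mathbf{V(L_8^{dm}, D_1, D_2, D_3)}$, as claimed.

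The expected obstacle is clerical rather than conceptual: the ten algebras $\mathbf{L_i}$ differ in only one or two entries of their $\to$-tables, so the computations must be done carefully to be sure that $(\dagger)$ singles out $\mathbf{L_8^{dm}}$ among them --- and that $(\dagger)$ is not already a theorem of $\mathbf{RDMSH_1}$, which its failure in, say, $\mathbf{L_1^{dm}}$ rules out. Among the positive checks, verifying $(\dagger)$ at all $16$ assignments in $\mathbf{D_1}$, where $0 \to 1 = 0$, is the least routine. The only conceptual input, namely that a subvariety of $\mathbf{RDMSH_1}$ is determined by the simple algebras it contains, is already available from the discriminator-variety structure of $\mathbf{DMSH_1}$ established in \cite{Sa12} and the identification of the simples in \cite{Sa14a}.
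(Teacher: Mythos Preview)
Your proposal is correct and follows exactly the approach the paper intends: the paper merely says this corollary is proved ``similarly'' to Corollary~4.2, i.e., by checking which of the finitely many simple $\mathbf{RDMSH_1}$-algebras (listed via Corollary~4.1(a) and Theorem~\ref{Th}) satisfy the identity, and your explicit counterexamples and verifications carry this out in full detail.
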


 $\mathbf{V(D_2)}$ was axiomatized in \cite{Sa12}.  Here are some more bases for it.

\begin{Corollary}
Each of the following identities is an equational base for 
 $\mathbf{V(D_2)}$, mod  $\mathbf{RDMH_1}$: \\
  
\begin{itemize}
\item[(1)]  $ y \to [0 \to (y \to x)] \approx  y \lor (y \to x)$\\

\item[(2)]  $x \lor (y \to z) \approx (x \lor  y) \to (x \lor z)$\\

\item[(3)] $x \lor [y \to (y \to x)^*] \approx x \lor y \lor (y \to x) $\\

\item[(4)]  $[\{x \lor (x \to y^*)\} \to (x \to y^*)] \lor x \lor y^* \approx 1.$
\end{itemize}

\end{Corollary}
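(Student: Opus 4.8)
The plan is to argue exactly as for the preceding corollaries, reducing the whole statement to a pair of finite checks in the two simple algebras $\mathbf{D_2}$ and $\mathbf{L_1^{dm}}$. First I would recall, from part (b) of the first Corollary of this section, that $\mathbf{RDMH_1}=\mathbf{V(L_1^{dm})}\vee\mathbf{V(D_2)}$, and that $\mathbf{RDMH_1}$, being a subvariety of the discriminator variety $\mathbf{DMSH_1}$, is again a discriminator variety, hence congruence distributive and semisimple. By J\'onsson's Lemma its nontrivial subdirectly irreducible (equivalently, simple) members are, up to isomorphism, the simple algebras in $HS(\{\mathbf{L_1^{dm}},\mathbf{D_2}\})$, namely $\mathbf{L_1^{dm}}$, $\mathbf{D_2}$, and $\mathbf{2^e}$, the last being (isomorphic to) a subalgebra of each of the other two. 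Hence the subvariety lattice of $\mathbf{RDMH_1}$ is the five-element lattice whose proper nontrivial members are $\mathbf{V(2^e)}\subsetneq\mathbf{V(L_1^{dm})}$ and $\mathbf{V(2^e)}\subsetneq\mathbf{V(D_2)}$. It follows that, for any identity $\varepsilon$, the subvariety of $\mathbf{RDMH_1}$ defined (modulo $\mathbf{RDMH_1}$) by $\varepsilon$ equals $\mathbf{V(D_2)}$ if and only if $\mathbf{D_2}\models\varepsilon$ while $\mathbf{L_1^{dm}}\not\models\varepsilon$; note that $\mathbf{2^e}\models\varepsilon$ is then automatic since $\mathbf{2^e}\in S(\mathbf{D_2})$. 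So it remains to carry out these two checks for each of (1)--(4).

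For the positive check I would use the fact that the $\langle\vee,\wedge,\to,0,1\rangle$-reduct of $\mathbf{D_2}$ is the four-element Boolean algebra, so that $p\to q=\neg p\vee q$ and $p^{*}=\neg p$, where $\neg$ is the Boolean complement. Substituting these and simplifying by ordinary Boolean algebra, each of (1)--(4) collapses to a Boolean tautology: (1) and (4) both reduce to $1\approx 1$; in (2) both sides become $x\vee\neg y\vee z$ (using $(\neg x\wedge\neg y)\vee x=\neg y\vee x$); and in (3), using $(y\to x)^{*}=y\wedge\neg x$ and then $y\to(y\wedge\neg x)=\neg x\vee\neg y$, the left side equals $x\vee\neg x\vee\neg y=1$, which is also the right side. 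Thus $\mathbf{D_2}$ satisfies all four identities.

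For the negative check I would exhibit, in the three-element chain $\mathbf{L_1^{dm}}$ (with $0<a<1$ and $a^{*}=a\to 0=0$), an assignment falsifying each identity: for (1), $x=0,\ y=a$ (left side $a\to(0\to(a\to 0))=a\to 1=1$, right side $a\vee(a\to 0)=a$); for (2), $x=y=a,\ z=0$ (left side $a\vee(a\to 0)=a$, right side $(a\vee a)\to(a\vee 0)=a\to a=1$); for (3), $x=0,\ y=a$ (left side $a\to(a\to 0)^{*}=a\to 0^{*}=a\to 1=1$, right side $0\vee a\vee(a\to 0)=a$); for (4), $x=y=a$ (here $x\to y^{*}=a\to 0=0$, so the bracketed term is $(a\vee 0)\to 0=0$ and the whole left side is $0\vee a\vee 0=a\neq 1$). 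Together with the positive check this gives the corollary.

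I do not anticipate any conceptual obstacle. The only step requiring a moment's care is confirming the $\vee$-over-$\to$ distributive law (2) in $\mathbf{D_2}$, which a priori is a $64$-case verification; the reduction to the Boolean reduct disposes of it in one line, and everything else is a short computation with the operation tables of Figure 1.
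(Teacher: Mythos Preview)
Your proposal is correct and follows essentially the same route as the paper. The paper's own argument (indicated only by ``the proofs of the following corollaries are similar'') is precisely to invoke part (b) of the first corollary of this section, namely $\mathbf{RDMH_1}=\mathbf{V(L_1^{dm})}\vee\mathbf{V(D_2)}$, and then check each identity in the finitely many simples; you have carried out those checks accurately, and your use of J\'onsson's Lemma to pin down the simples as $\mathbf{2^e},\ \mathbf{L_1^{dm}},\ \mathbf{D_2}$ just makes explicit what the paper leaves implicit via Theorem \ref{Th}.
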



$\mathbf{V(D_1)}$ was axiomatized in \cite{Sa12}.  Here are more bases for it.

\begin{Corollary}
Each of the following identities is an equational base for 
 $\mathbf{V(D_1)}$, mod  $\mathbf{RDMcmSH_1}$:\\

\begin{itemize}

\item[(1)] $y \lor (y \to (x \lor  y)) \approx (0 \to x) \lor (x \to y)$ \\ 

\item[(2)]   $x \lor [y \to (y \to x)^*] \approx x \lor y \lor (y \to x)$\\

\item[(3)]   $[\{x \lor (x \to y^*)\} \to (x \to y^*)] \lor x \lor y^* \approx 1$\\

\item[(4)]  $x \lor (y \to z) \approx (x \lor  y) \to (x \lor z)$.\\
 \end{itemize}
\end{Corollary}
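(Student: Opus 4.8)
The plan is to lean on part~(c) of the corollary above, which gives $\mathbf{RDMcmSH_1} = \mathbf{V(L_{10}^{dm})} \lor \mathbf{V(D_1)}$, together with the fact (recorded just after Definition~\ref{5.5}) that $\mathbf{DMSH_1}$, and hence its subvariety $\mathbf{RDMcmSH_1}$, is a discriminator variety. In a discriminator variety the subdirectly irreducible algebras are exactly the simple ones, and by J\'onsson's Lemma every subdirectly irreducible member of $\mathbf{RDMcmSH_1} = \mathbf{V(L_{10}^{dm}, D_1)}$ lies in $HS(\{\mathbf{L_{10}^{dm}}, \mathbf{D_1}\})$. Since $\mathbf{L_{10}^{dm}}$ and $\mathbf{D_1}$ are simple, each has a single proper subalgebra, a copy of $\mathbf{\bar{2}^e}$, and neither embeds into the other; hence the subdirectly irreducibles of $\mathbf{RDMcmSH_1}$ are, up to isomorphism, just $\mathbf{\bar{2}^e}$, $\mathbf{L_{10}^{dm}}$ and $\mathbf{D_1}$, and its subvariety lattice is the five-element lattice with bottom the trivial variety, atom $\mathbf{V(\bar{2}^e)}$, coatoms $\mathbf{V(L_{10}^{dm})}$ and $\mathbf{V(D_1)}$, and top $\mathbf{RDMcmSH_1}$. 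In particular $\mathbf{V(D_1)}$ is a \emph{maximal} proper subvariety of $\mathbf{RDMcmSH_1}$.

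With this in hand, I would fix any one of the four identities, call it $\varepsilon$, and let $\mathbf{W}$ be the subvariety of $\mathbf{RDMcmSH_1}$ defined, relative to the axioms of $\mathbf{RDMcmSH_1}$, by $\varepsilon$ alone; the goal is $\mathbf{W} = \mathbf{V(D_1)}$, which is exactly the assertion to be proved. By the maximality above, it suffices to check two finite facts. First, $\mathbf{D_1} \models \varepsilon$, so that $\mathbf{V(D_1)} \subseteq \mathbf{W}$: for identity~(4) this is already part of the earlier corollary listing bases for $\mathbf{V(D_1, D_2, D_3)}$, and for (1)--(3) it is a direct evaluation in the four-element algebra $\mathbf{D_1}$ using the $\to$-table of Figure~1 and $a' = a$, $b' = b$. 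Second, $\mathbf{L_{10}^{dm}} \not\models \varepsilon$, so that $\mathbf{L_{10}^{dm}} \notin \mathbf{W}$ and therefore $\mathbf{W} \subsetneq \mathbf{RDMcmSH_1}$: for instance, in the case of (4) the assignment $x = y = a$, $z = 0$ gives $a \lor (a \to 0) = a$ while $(a \lor a) \to (a \lor 0) = a \to a = 1$, and I expect the failing assignments for (1)--(3) likewise to involve the middle element $a$ of $\mathbf{L_{10}^{dm}}$. Then $\mathbf{V(D_1)} \subseteq \mathbf{W} \subsetneq \mathbf{RDMcmSH_1}$ forces $\mathbf{W} = \mathbf{V(D_1)}$. (That $\mathbf{W}$ is nontrivial, i.e. $\varepsilon$ is consistent with $\mathbf{RDMcmSH_1}$, is automatic since $\mathbf{\bar{2}^e} \in \mathbf{V(D_1)} \subseteq \mathbf{W}$.)

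All the conceptual content sits in the first step; what remains is the routine verification of (1)--(4) at finitely many tuples in a three-element and a four-element algebra, entirely parallel to the proofs of the preceding corollaries --- e.g. the list of bases for $\mathbf{V(D_2)}$ modulo $\mathbf{RDMH_1}$ --- which the text already declares to be similar. The only mild obstacle I anticipate is the bookkeeping through the $\to$-tables for the three longer identities (1), (2), (3), and in particular pinning down, for each, a correct failing assignment in $\mathbf{L_{10}^{dm}}$; nothing structural is at stake.
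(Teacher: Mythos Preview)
Your proposal is correct and follows essentially the same route as the paper. The paper's own proof is the blanket remark ``the proofs of the following corollaries are similar,'' pointing back to the argument for Corollary~4.2: use the explicit list of simples in the ambient variety (here $\mathbf{RDMcmSH_1}=\mathbf{V(L_{10}^{dm},D_1)}$ from Corollary~4.1(c)) and verify by finite computation that each identity holds in $\mathbf{D_1}$ but fails in $\mathbf{L_{10}^{dm}}$. Your maximality formulation is a clean way to phrase why those two checks suffice, and it unwinds to exactly the same verification the paper intends; the only work left in both accounts is the table look-ups you describe.
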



\begin{Corollary}
 The variety $\mathbf{V(L_1^{dm}, L_2^{dm}, L_3^{dm}, D_1, D_2, D_3))}$ is  defined, mod 
 $\mathbf{RDMSH_1}$, by 
\begin{itemize}
\item[ ]  $x \to (y \to z)=y \to (x \to z).$
\end{itemize}
\end{Corollary}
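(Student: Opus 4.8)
The plan is to follow the same route as the proofs of the preceding corollaries of this section, turning the assertion into a finite inspection of the simple algebras of $\mathbf{RDMSH_1}$. First I would recall that $\mathbf{DMSH_1}$, and hence its subvariety $\mathbf{RDMSH_1}$, is a discriminator variety (from \cite{Sa12}); in particular it is congruence-distributive and every subdirectly irreducible member of it is simple. Next, by Theorem \ref{regH} together with Corollary 3.4(a) of \cite{Sa14a} --- equivalently, by the Corollary above stating $\mathbf{RDMSH_1} = \mathbf{V(C_{10}^{dm})} \lor \mathbf{V(D_1, D_2, D_3)}$ --- the variety $\mathbf{RDMSH_1}$ is generated by the thirteen finite simple algebras $\mathbf{L}_1^{dm}, \dots, \mathbf{L}_{10}^{dm}, \mathbf{D_1}, \mathbf{D_2}, \mathbf{D_3}$. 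By J\'onsson's Lemma, every simple member of $\mathbf{RDMSH_1}$ is then isomorphic to one of these or to a proper subalgebra of one of them; the only proper subalgebras that arise are copies of $\mathbf{2^e}$ and $\mathbf{\bar{2}^e}$, and these already sit inside some of the thirteen generators. Hence every subvariety of $\mathbf{RDMSH_1}$ is determined by the set of these finitely many simple algebras that belong to it, and an equationally defined subvariety contains exactly the simple algebras satisfying the defining identity.

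With this reduction in hand, it remains to decide, for each simple member of $\mathbf{RDMSH_1}$, whether it satisfies the interchange identity $x \to (y \to z) \approx y \to (x \to z)$; the subvariety of $\mathbf{RDMSH_1}$ that this identity defines is then the variety generated by exactly those simples, and absorbing the two-element simples into the larger generators this is the join $\mathbf{V(L_1^{dm}, L_2^{dm}, L_3^{dm}, D_1, D_2, D_3)}$ asserted in the statement. The verification is eased by two remarks: (i) in every semi-Heyting algebra $1 \to u = u$ (instantiate (SH1) at $x := 1$), so any instance of the identity with $x = 1$ or $y = 1$ reduces to a triviality, and (ii) the identity is symmetric in $x$ and $y$. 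Thus for each three-element $\mathbf{L}_i^{dm}$ the only assignments to be tested are those with $\{x, y\} = \{0, a\}$ and $z \in \{0, a, 1\}$, and for each four-element $\mathbf{D}_j$ only those with $\{x, y\}$ a pair of distinct non-top elements and $z \in \{0, 1, a, b\}$. Reading the $\to$-tables in Figure 1 off one triple at a time then settles each algebra, an explicit failing triple being read off directly whenever the identity does not hold.

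I expect the only genuine obstacle to be the bookkeeping in this finite case analysis --- checking a small, fixed collection of three- and four-element tables without error --- since everything structural is supplied by the discriminator-variety machinery and by Corollary 3.4(a) of \cite{Sa14a}, exactly as in the proofs of the earlier corollaries of this section (``The proofs of the following corollaries are similar'').
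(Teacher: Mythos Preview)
Your proposal is correct and follows essentially the same route as the paper: after Corollary 4.2 the paper declares ``The proofs of the following corollaries are similar,'' meaning one invokes Corollary 4.1(a) to reduce to the thirteen simple generators of $\mathbf{RDMSH_1}$ and then checks by direct inspection which of them satisfy the given identity --- exactly what you do, with a bit more explicit justification via J\'onsson's Lemma and the observations $1 \to u = u$ and symmetry in $x,y$ to streamline the case analysis.
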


\begin{Corollary}
 The variety $\mathbf{V(C_{10}^{dm})}$ is defined, mod \\
 $\mathbf{RDMSH_1}$, by 
\begin{itemize}
\item[ ]  $x \land x' \leq y \lor y'$  {\rm (Kleene identity)}.
\end{itemize}
\end{Corollary}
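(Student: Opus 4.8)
The plan is to reduce the statement, exactly as in the proof of the pseudocommutativity corollary above, to a finite inspection of the simple algebras of $\mathbf{RDMSH_1}$. Recall that $\mathbf{DMSH_1}$, and hence $\mathbf{RDMSH_1}$, is a discriminator variety, so in $\mathbf{RDMSH_1}$ subdirect irreducibility coincides with simplicity; consequently every subvariety $\mathbf{V} \subseteq \mathbf{RDMSH_1}$ is generated by its simple members (each member being a subdirect product of simple members), and an identity holds in $\mathbf{V}$ if and only if it holds in every simple algebra that belongs to $\mathbf{V}$. By the corollary proved above (the one asserting $\mathbf{RDMSH_1} = \mathbf{V(C_{10}^{dm})} \lor \mathbf{V(D_1, D_2, D_3)}$), together with the classification of the simple algebras of $\mathbf{RDMSH_1}$ in \cite{Sa14}, the simple members of $\mathbf{RDMSH_1}$ are, up to isomorphism, precisely $\mathbf{L_1^{dm}}, \dots, \mathbf{L_{10}^{dm}}, \mathbf{D_1}, \mathbf{D_2}, \mathbf{D_3}$ (the algebras $\mathbf {2^e}$ and $\mathbf{\bar{2}^e}$ occurring among the subalgebras of the $\mathbf{L_i^{dm}}$).

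The first step is to verify that every algebra of $\mathbf{C_{10}^{dm}}$ satisfies the Kleene identity $x \land x' \leq y \lor y'$. Each $\mathbf{L_i^{dm}}$ has universe the chain $\{0, a, 1\}$ with $0' = 1$, $a' = a$, $1' = 0$; hence $u \land u' \in \{0, a\}$ and $v \lor v' \in \{a, 1\}$ for all $u, v$, so that $u \land u' \leq a \leq v \lor v'$ and the identity holds. (It holds trivially in $\mathbf {2^e}$ and $\mathbf{\bar{2}^e}$, where $u \land u' = 0$.) The second step is to observe that the Kleene identity fails in each of $\mathbf{D_1}, \mathbf{D_2}, \mathbf{D_3}$: in each of these the De Morgan reduct is the four-element Boolean lattice $\{0, a, b, 1\}$ with $a' = a$ and $b' = b$, so the substitution $x := a$, $y := b$ gives $x \land x' = a$ and $y \lor y' = b$, while $a \not\leq b$.

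It then follows that a subvariety $\mathbf{V}$ of $\mathbf{RDMSH_1}$ satisfies the Kleene identity if and only if none of $\mathbf{D_1}, \mathbf{D_2}, \mathbf{D_3}$ is a member of $\mathbf{V}$, if and only if every simple member of $\mathbf{V}$ lies in $\mathbf{C_{10}^{dm}}$, if and only if $\mathbf{V} \subseteq \mathbf{V(C_{10}^{dm})}$. Since $\mathbf{V(C_{10}^{dm})}$ itself satisfies the Kleene identity by the first step, this is exactly the assertion that $\mathbf{V(C_{10}^{dm})}$ is the subvariety of $\mathbf{RDMSH_1}$ defined, modulo $\mathbf{RDMSH_1}$, by $x \land x' \leq y \lor y'$. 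The whole argument is thus a short finite computation once the classification of the simple members of $\mathbf{RDMSH_1}$ is in hand; that classification (from \cite{Sa14}) is the only non-routine ingredient, and there is no genuine obstacle beyond it.
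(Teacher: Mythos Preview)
Your proposal is correct and follows the same approach the paper intends: the paper's own proof of this corollary is subsumed under the remark that ``the proofs of the following corollaries are similar'' to that of the pseudocommutativity corollary, which is precisely the finite check against the list of simples that you carry out. The only minor imprecision is your phrasing that the simples are ``precisely $\mathbf{L_1^{dm}},\dots,\mathbf{L_{10}^{dm}},\mathbf{D_1},\mathbf{D_2},\mathbf{D_3}$'' with $\mathbf{2^e},\mathbf{\bar 2^e}$ relegated to a parenthetical---these two-element algebras are themselves simples in the list (cf.\ Theorem~\ref{Th})---but since you verify the Kleene identity in them and they lie in $\mathbf{V(C_{10}^{dm})}$ as subalgebras, this does not affect the argument.
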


\begin{Corollary}
 The variety $\mathbf{V(L_{10}^{dm})}$ is defined, mod $\mathbf{RDMSH_1}$, by 
\begin{itemize}
\item[(1)]  $x \land x' \leq y \lor y'$  {\rm (Kleene identity)}
\item[(2)]  $x \to y \approx y \to x$.
\end{itemize}
\end{Corollary}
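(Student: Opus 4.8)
The plan is to argue exactly as in the preceding corollaries, using that $\mathbf{RDMSH_1}$ is a discriminator variety whose simple members are, up to isomorphism, the algebras of $\mathbf{C_{10}^{dm}}$ together with $\mathbf{D_1}, \mathbf{D_2}, \mathbf{D_3}$ and their subalgebras (Corollary on the structure of $\mathbf{RDMSH_1}$, part (a)). Consequently the subvariety $\mathbf{W}$ of $\mathbf{RDMSH_1}$ axiomatized by (1) and (2) is again a discriminator variety, generated by those simple members of $\mathbf{RDMSH_1}$ that satisfy both (1) and (2); so it suffices to identify precisely which simples these are, and to check that together they generate exactly $\mathbf{V(L_{10}^{dm})}$.

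First I would settle the inclusion $\mathbf{V(L_{10}^{dm})} \subseteq \mathbf{W}$ by verifying (1) and (2) directly on the three-element algebra $\mathbf{L_{10}^{dm}}$ (recall $a' = a$, $0' = 1$, $1' = 0$). A one-line computation gives $\{x \land x' : x \in L_{10}\} = \{0, a\}$ and $\{y \lor y' : y \in L_{10}\} = \{a, 1\}$, so $x \land x' \leq a \leq y \lor y'$ for all $x, y$, which is (1); and the $\to$-table of $\mathbf{L_{10}}$ in Figure 1 is symmetric, which is (2). Since $\mathbf{L_{10}^{dm}} \in \mathbf{RDMSH_1}$ already, it lies in $\mathbf{W}$, whence $\mathbf{V(L_{10}^{dm})} \subseteq \mathbf{W}$.

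For the reverse inclusion I would invoke the preceding corollary, which asserts that (1) alone defines $\mathbf{V(C_{10}^{dm})}$ modulo $\mathbf{RDMSH_1}$; thus $\mathbf{W}$ is the subvariety of $\mathbf{V(C_{10}^{dm})}$ defined by the commutative law (2). The simple members of $\mathbf{V(C_{10}^{dm})}$ are the subalgebras of $\mathbf{L_1^{dm}}, \dots, \mathbf{L_{10}^{dm}}$, namely those ten algebras themselves together with $\mathbf{2^e}$ (a subalgebra of $\mathbf{L_1^{dm}}$, say) and $\mathbf{\bar{2}^e}$ (a subalgebra of $\mathbf{L_{10}^{dm}}$, and of $\mathbf{L_9^{dm}}$). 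Inspecting the $\to$-tables shows that each of $\mathbf{L_1^{dm}}, \dots, \mathbf{L_9^{dm}}$ violates (Co) — e.g. $0 \to a \neq a \to 0$ in $\mathbf{L_1^{dm}}, \dots, \mathbf{L_8^{dm}}$ and $a \to 1 \neq 1 \to a$ in $\mathbf{L_9^{dm}}$ — and $\mathbf{2^e}$ also violates (Co), while $\mathbf{L_{10}^{dm}}$ and $\mathbf{\bar{2}^e}$ satisfy it. Hence the only simple members of $\mathbf{V(C_{10}^{dm})}$ satisfying (2) are (copies of) $\mathbf{L_{10}^{dm}}$, its subalgebra $\mathbf{\bar{2}^e}$, and the trivial algebra — all lying in $\mathbf{V(L_{10}^{dm})}$. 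As $\mathbf{W}$ is generated by its simple members, $\mathbf{W} \subseteq \mathbf{V(L_{10}^{dm})}$, and equality follows.

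I do not anticipate a genuine obstacle; the routine points needing care are the bookkeeping of subalgebras of the $\mathbf{L_i^{dm}}$ (one must remember $\mathbf{\bar{2}^e}$, which satisfies (Co) even though its host $\mathbf{L_9^{dm}}$ does not — but harmlessly, since $\mathbf{\bar{2}^e} \in \mathbf{V(L_{10}^{dm})}$), and the appeal to the structure theory of the discriminator variety $\mathbf{DMSH_1}$ guaranteeing that subvarieties are determined by their simple members. If one prefers to avoid the reduction via the preceding corollary, the same conclusion is obtained directly from Corollary (a) by additionally checking that $\mathbf{D_1}, \mathbf{D_2}, \mathbf{D_3}$ all fail (1) — immediate, since $a$ and $b$ are incomparable and $a' = a$, $b' = b$ in each $\mathbf{D_i}$, so $a = a \land a' \not\leq b = b \lor b'$ — together with the fact, already noted in the proof of Corollary (c), that among $\mathbf{D_1},\mathbf{D_2},\mathbf{D_3}$ only $\mathbf{D_1}$ is commutative.
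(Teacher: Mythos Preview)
Your proposal is correct and follows essentially the same approach as the paper. The paper does not give a separate proof for this corollary; after the first axiomatization corollary it simply says ``the proofs of the following corollaries are similar,'' meaning one checks, using Corollary~4.1(a), which of the simple algebras in $\mathbf{RDMSH_1}$ satisfy the identities in question --- exactly what you do, with the added (and welcome) care of tracking the two-element subalgebras $\mathbf{2^e}$ and $\mathbf{\bar{2}^e}$.
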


\vspace{1cm}

\section{Lattice of subvarieties of $\mathbf{RDQDStSH_1}$ }

We now turn to describe the lattice of subvarieties of $\mathbf{RDQDStSH_1}$ which contains $\mathbf{RDMSH_1}$ in view of Theorem \ref{regH}.   For this purpose we need the following theorem which is proved in \cite{Sa14}.

\begin{Theorem}\label{Th}
Let $\mathbf{L} \in \mathbf{RDQDStSH1}$.  Then TFAE:
\begin{itemize}
\item[(1)] $L$ is simple\\
\item[(2)] $L$ is subdirectly irreducible\\
\item[(3)] $\mathbf{L} \in \mathbf{\{2^e, \bar{2}^e\}} \cup \mathbf{C_{20}} \cup \mathbf{\{D_1, D_2, D_3 \}}$.
\end{itemize}
\end{Theorem}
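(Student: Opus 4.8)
The plan is to establish $(1)\Leftrightarrow(2)$, then $(3)\Rightarrow(1)$, and finally $(1)\Rightarrow(3)$; since $(1)\Rightarrow(2)$ is trivial, this closes the cycle. For $(1)\Leftrightarrow(2)$ the decisive fact is that $\mathbf{RDQDStSH_1}$ is a discriminator variety: arguing as one does for $\mathbf{DMSH_1}$ in \cite{Sa12}, one produces a term built from $\to$, $^{*}$, $'$, $\wedge$, $\vee$ that acts as a ternary discriminator on every subdirectly irreducible member, the crux being that in any subdirectly irreducible member one can detect ``$x=1$'' by a term --- for instance $x\wedge x'^{*}=0$ holds there whenever $x\ne1$, cf.\ the opening line of the proof of Lemma \ref{regF}. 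Since in a discriminator variety the subdirectly irreducible algebras are exactly the simple ones, $(1)\Leftrightarrow(2)$ is immediate.

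$(3)\Rightarrow(1)$ is a finite verification: each algebra in $\{\mathbf{2^e},\mathbf{\bar{2}^e}\}\cup\mathbf{C_{20}}\cup\{\mathbf{D_1},\mathbf{D_2},\mathbf{D_3}\}$ has at most four elements, and one checks directly from its operation tables that it lies in $\mathbf{RDQDStSH_1}$ and has the two-element chain as its congruence lattice --- most efficiently by confirming that the discriminator term of the previous paragraph is a genuine discriminator on it.

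For $(1)\Rightarrow(3)$, let $\mathbf{L}$ be simple; the task is to bound $|L|$ and then to identify $\mathbf{L}$. Since $\mathbf{L}$ is distributive, pseudocomplemented and satisfies (St), its dense set $D(\mathbf{L})=\{x:x^{*}=0\}$ is a filter, its skeleton $B(\mathbf{L})=\{x:x^{**}=x\}$ is a Boolean algebra, and $(x\wedge y)^{*}=x^{*}\vee y^{*}$. Using (L1), (R) and simplicity one shows first that $\mathbf{L}$ has at most one dense element other than $1$, and then that $\mathbf{L}$ cannot carry simultaneously a nontrivial Boolean skeleton and a proper dense element --- such a configuration would present $\mathbf{L}$ as a nontrivial subdirect product. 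Hence either $\mathbf{L}$ is a Boolean semi-Heyting algebra, in which case Jónsson's lemma and the identification $\mathbf{V(D_1,D_2,D_3)}=\mathbf{DQDBSH}$ of \cite{Sa12} force $\mathbf{L}\in\{\mathbf{2^e},\mathbf{\bar{2}^e},\mathbf{D_1},\mathbf{D_2},\mathbf{D_3}\}$, or $B(\mathbf{L})=\{0,1\}$, and then every element is $0$ or dense so that the lattice reduct is the two-element chain or the three-element chain $0<d<1$. In this last case one concludes by enumeration: (SH1)--(SH3) force $d\to0=1\to0=0$ and $1\to d=d$, leave only $0\to d$, $0\to1$ and $d\to1$ free, and --- together with (SH2) --- admit exactly ten arrows, while (a)--(d), (St) and (R) admit precisely $d'=d$ and $d'=1$; collecting cases, $\mathbf{L}$ is one of the $25$ algebras $\mathbf{2^e}$, $\mathbf{\bar{2}^e}$, the twenty members of $\mathbf{C_{20}}=\mathbf{C_{10}^{dm}}\cup\mathbf{C_{10}^{dp}}$, and $\mathbf{D_1},\mathbf{D_2},\mathbf{D_3}$.

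The main obstacle is the implication ``simple $\Rightarrow|L|\le4$'' inside $(1)\Rightarrow(3)$: one must play the level-$1$ identity, regularity and the Stone identity off against one another to rule out every larger subdirectly irreducible --- in particular a simple algebra whose lattice reduct is the direct product of the two-element and three-element chains, a lattice that satisfies all the purely order-theoretic constraints above. Once the lattice reduct has been pinned to the two-chain, the three-chain, or a four-element Boolean lattice, what remains is a finite --- though laborious --- inspection of the admissible operation tables, the semi-Heyting identities being highly restrictive on lattices that small.
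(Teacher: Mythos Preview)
The paper does not prove this theorem at all: immediately before stating it, the author writes ``we need the following theorem which is proved in \cite{Sa14}.'' Theorem~\ref{Th} is imported wholesale from the companion paper \cite{Sa14}, so there is no in-paper proof to compare your proposal against.

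As a freestanding sketch, your outline for $(1)\Leftrightarrow(2)$ and $(3)\Rightarrow(1)$ is sound: the discriminator-variety argument from \cite{Sa12} does transfer, and the finite verifications are routine. The gap is exactly where you locate it yourself. Your final paragraph says that ``one must play the level-$1$ identity, regularity and the Stone identity off against one another'' to bound $|L|$, and even names the $2\times3$ lattice as a configuration still to be excluded---but you never actually carry this out. The claim that simplicity forbids ``simultaneously a nontrivial Boolean skeleton and a proper dense element'' is asserted, not proved: in an arbitrary $\mathbf{DQDSH}$-algebra the dense filter need not be a congruence kernel (the signature includes $'$ and $\to$, not just the lattice operations), so the subdirect-product heuristic from distributive $p$-algebras does not apply directly. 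Likewise, the assertion that there is ``at most one dense element other than $1$'' needs an argument; (L1) and (R) constrain $x\wedge x'^{*}$ and $x\wedge x^{+}$, but a dense $d\ne1$ has $d^{*}=0$, and you have to show how simplicity then forces the chain structure. In \cite{Sa14} this step occupies a substantial sequence of lemmas; what you have written is a correct table of contents for that argument rather than the argument itself.
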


Let $\mathcal{L}$ denote the lattice of subvarieties of $\mathbf{RDQDStSH_1}$.
$\mathbf{T}$ denotes the trivial variety, and, for $n$ a positive integer, $\mathbf{B_n}$ denotes the $n$-atom Boolean lattice.   We also let $\mathbf{1+B}$ denote 
the lattice obtained by adding a new least element $0$ to the Boolean lattice $\mathbf{B}$.

\begin{Theorem}
$\mathcal{L} \cong \mathbf{(1+B_9)} \times  \mathbf{(1+B_5)} \times \mathbf{B_9}$.
\end{Theorem}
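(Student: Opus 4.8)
The plan is to leverage Theorem \ref{Th}, which identifies the subdirectly irreducible (equivalently, simple) members of $\mathbf{RDQDStSH_1}$ as exactly the $30$ finite algebras in $\mathbf{\{2^e, \bar{2}^e\}} \cup \mathbf{C_{20}} \cup \mathbf{\{D_1, D_2, D_3\}}$. Since $\mathbf{RDQDStSH_1}$ is congruence-distributive and these simples are finite (indeed a discriminator-type situation is in the background), every subvariety is determined by the set of simples it contains, and by J\'onsson's Lemma the simples of a join $\mathbf{V(K_1)} \lor \mathbf{V(K_2)}$ are just the union of the simples of each. Hence $\mathcal{L}$ is isomorphic to the lattice of those subsets $S$ of the $30$-element set of simples that are ``closed'' in the sense that $S$ is the set of \emph{all} simples lying in $\mathbf{V}(S)$. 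The first step is therefore to determine exactly which subsets arise this way, i.e.\ to compute, for each simple $\mathbf{A}$, the set of simples $\mathbf{B}$ with $\mathbf{B} \in \mathbf{V(A)}$ — equivalently the quasi-order $\mathbf{B} \leq \mathbf{A}$ given by $\mathbf{B} \in \mathbf{HS}(\mathbf{A})$ (using $\mathbf{HSP}$ and J\'onsson, $\mathbf{B}$ simple in $\mathbf{V(A)}$ forces $\mathbf{B} \in \mathbf{HS}(\mathbf{A})$ up to isomorphism).

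The second step is the combinatorial heart: analyze this quasi-order on the $30$ simples and show it decomposes as a disjoint union (in the order-theoretic sense, giving a product of subvariety lattices) of three independent pieces. The three factors $\mathbf{1+B_9}$, $\mathbf{1+B_5}$, $\mathbf{B_9}$ strongly suggest the following grouping: the $10$ ``$dm$''-algebras $\mathbf{L}_i^{dm}$ together with $\mathbf{2^e}$ behave so that among them the proper subalgebra/quotient relations collapse each to an atom of its own (giving $9$ atoms plus something) — more precisely one expects $\mathbf{2^e} \in \mathbf{HS}(\mathbf{L}_i^{dm})$ for all $i$ (since $\mathbf{2^e}$ is the two-element De Morgan semi-Heyting algebra, a subalgebra of each $\mathbf{L}_i^{dm}$ on $\{0,1\}$), so $\mathbf{V(2^e)}$ is the unique atom below all of them, while the $10$ join-irreducible varieties $\mathbf{V(L_i^{dm})}$ sit above it with no further relations among them; collapsing via the common atom yields a lattice of the shape $\mathbf{1+B_{?}}$. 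One must check the exact counts: presumably two of the ten $\mathbf{L}_i^{dm}$ coincide under the relevant generation (or $\mathbf{2^e}$ merges with one), landing on $9$. Similarly the $10$ ``$dp$''-algebras $\mathbf{L}_i^{dp}$ plus $\mathbf{\bar 2^e}$ form a block isomorphic to $\mathbf{1+B_5}$ — here $\mathbf{\bar 2^e}$ (the two-element Heyting algebra with dual pseudocomplement) is the common atom, and the count $5$ means the ten $\mathbf{L}_i^{dp}$ collapse in pairs under $\mathbf{HS}$, which is plausible since $\mathbf{L}_i^{dp}$ and $\mathbf{L}_j^{dp}$ can generate the same variety. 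Finally $\mathbf{\{D_1, D_2, D_3\}}$ with their subalgebras: each $\mathbf{D_k}$ has $\mathbf{2^e}$ or $\mathbf{\bar 2^e}$-type subalgebras but those already live in the other blocks; the $4$-element $\mathbf{D_k}$'s themselves generate three pairwise-incomparable atoms, but together with the induced proper subvarieties they form a $\mathbf{B_9}$ — here one counts $9 = 2^3 + 1$? No: $\mathbf{B_9}$ has $2^9$ elements and $9$ atoms, so one needs $9$ join-irreducible simples associated with the $\mathbf{D}$-block; this requires identifying the proper subalgebras of $\mathbf{D_1, D_2, D_3}$ that are again simple and counting them plus the $D_k$ themselves to total $9$, with all $\binom{}{}$ combinations realizable.

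The third step, once the three blocks and their internal orders are pinned down, is to verify that the three blocks are \emph{independent}: no simple in one block lies in the variety generated by any collection of simples from another block, so that a subvariety is freely specified by choosing a down-set in each block, giving $\mathcal{L} \cong \mathcal{L}_1 \times \mathcal{L}_2 \times \mathcal{L}_3$ with $\mathcal{L}_j$ the subvariety lattice of the block. Independence follows from distinguishing identities: the $dm$-block satisfies $x'' \approx x$ (De Morgan) but a nontrivial $dp$-algebra has $a' = 1 \neq a$; the $\mathbf{D}$-block algebras are exactly those satisfying (Bo) $x \vee x^* \approx 1$ per the cited $\mathbf{V(D_1,D_2,D_3)} = \mathbf{DQDBSH}$ fact, and no $3$-element $\mathbf{L}_i$ satisfies (Bo) (its $a$ has $a^* = 0$, $a \vee 0 = a \neq 1$). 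Combining these separations with the earlier count that the subvariety lattice of a block of pairwise-independent join-irreducibles with a common bottom atom is $\mathbf{1+B_n}$ (and with no common atom, the three mutually-incomparable $\mathbf{D}$-generators plus their subalgebra structure give $\mathbf{B_9}$) completes the isomorphism. I expect the main obstacle to be precisely the bookkeeping in step two: correctly computing $\mathbf{HS}(\mathbf{A})$ for each of the $30$ algebras — in particular identifying which of the $\mathbf{L}_i^{dp}$ collapse in pairs (to get $5$ rather than $10$) and which subalgebras of $\mathbf{D_1, D_2, D_3}$ are new simples (to reach the count $9$), since an off-by-one error anywhere changes the final product. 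This is a finite but delicate verification, presumably done by inspecting the operation tables in Figure 1 together with the definitions of $'$ on each expansion, and is exactly the kind of computation the author would either tabulate or cite from \cite{Sa14}.
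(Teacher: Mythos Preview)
Your general framework---using Theorem~\ref{Th} and J\'onsson's Lemma to identify $\mathcal{L}$ with the lattice of $\mathbf{HS}$-closed subsets of the $25$ simples, and then seeking a product decomposition via independent blocks---is exactly what the paper does. The gap is that your guess at the three blocks is wrong, and this is not merely a bookkeeping issue: the partition you propose (the ten $\mathbf{L}_i^{dm}$ with $\mathbf{2^e}$; the ten $\mathbf{L}_i^{dp}$ with $\mathbf{\bar 2^e}$; the three $\mathbf{D}_k$) is \emph{not independent}. For instance $\{0,1\}$ is a subalgebra of $\mathbf{L}_1^{dp}$ (since $0\to 1=1$ there and $0'=1$, $1'=0$), and this subalgebra is $\mathbf{2^e}$, so $\mathbf{2^e}\in\mathbf{HS}(\mathbf{L}_1^{dp})$; likewise $\mathbf{\bar 2^e}\in\mathbf{HS}(\mathbf{L}_9^{dm})$, and $\mathbf{2^e}\in\mathbf{HS}(\mathbf{D_2})$, $\mathbf{\bar 2^e}\in\mathbf{HS}(\mathbf{D_1})$. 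Thus your blocks cross-talk and no direct-product formula follows from them. Your attempts to force the counts (pairing the $\mathbf{L}_i^{dp}$ to get $5$, finding $9$ simples inside the $\mathbf{D}$-block) are symptoms of the wrong partition rather than computations that can be completed.

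The correct partition, used in the paper, is governed by the constant $0\to 1$. One checks from the tables in Figure~1 that $0\to 1=1$ in $\mathbf{L}_1,\dots,\mathbf{L}_4$ and $\mathbf{D_2}$; $0\to 1=0$ in $\mathbf{L}_9,\mathbf{L}_{10}$ and $\mathbf{D_1}$; and $0\to 1=a\notin\{0,1\}$ in $\mathbf{L}_5,\dots,\mathbf{L}_8$ and $\mathbf{D_3}$. Accordingly set
\[
S_1=\{\mathbf{L}_i^{dm},\mathbf{L}_i^{dp}:i=1,2,3,4\}\cup\{\mathbf{D_2}\},\quad
S_2=\{\mathbf{L}_i^{dm},\mathbf{L}_i^{dp}:i=9,10\}\cup\{\mathbf{D_1}\},
\]
\[
S_3=\{\mathbf{L}_i^{dm},\mathbf{L}_i^{dp}:i=5,6,7,8\}\cup\{\mathbf{D_3}\}.
\]
Every algebra in $S_1$ has $\mathbf{2^e}$ as its unique nontrivial proper subalgebra, so $[\mathbf{T},\mathbf{V}(S_1)]\cong \mathbf{1+B_9}$; every algebra in $S_2$ has $\mathbf{\bar 2^e}$ as its unique nontrivial proper subalgebra, so $[\mathbf{T},\mathbf{V}(S_2)]\cong \mathbf{1+B_5}$; and every algebra in $S_3$ has only the trivial subalgebra (since $\{0,1\}$ is not $\to$-closed), so $[\mathbf{T},\mathbf{V}(S_3)]\cong \mathbf{B_9}$. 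Independence of the three blocks is immediate because the value of the constant $0\to 1$ is preserved under $\mathbf{HS}$, so no simple from one block lies in the variety generated by another. This yields $\mathcal{L}\cong(\mathbf{1+B_9})\times(\mathbf{1+B_5})\times\mathbf{B_9}$ without any of the collapsing or pairing you anticipated.
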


\begin{proof}
Let $S_1$ := $\mathbf{\{L_i^{dm}: i =1,2,3,4\}} \cup 
 \mathbf{\{L_i^{dp}: i =1,2,3,4\}} \cup \mathbf{\{D_2 \}}$, \\
  $S_2$ := $\mathbf{\{L_i^{dm}: i =9, 10\}} \cup 
 \mathbf{\{L_i^{dp}: i =9, 10 \}} \cup \mathbf{\{D_1 \}}$, and \\
 $S_3$:= $\mathbf{\{L_i^{dm}: i =5,6,7,8 \}} \cup 
 \mathbf{\{L_i^{dp}: i = 5,6,7,8 \}} \cup \mathbf{\{D_3 \}}$.
Observe that each of the simples in $S_1$
contains $\mathbf{2^e}$.  Let us first look at the interval $\mathbf{[V(2^e), V(S_1)]}$.
Since each algebra in $S_1$ is an atom in this interval, 
we can conclude that the interval is a 9-atom Boolean lattice; thus the 
interval $\mathbf{[T, V(S_1)]}$
 is isomorhic to $\mathbf{1+B_9}$.  Similarly, since each of the simples in $S_2$
contains $\mathbf{\bar{2}^e}$, it is clear that the interval $\mathbf{[T, V(S_2)]}$ 
is isomorphic to $\mathbf{1+B_5}$.  
Likewise, since each of the simples in $S_3$ 
has only one subalgebra, namely the trivial algebra, the interval
$\mathbf{[T, S_3]}$ 
is isomprphic to $\mathbf{B_9}$.  Observe that the the intersection of the subvarieties $\mathbf{V(S_1)}$, $\mathbf{V(S_2)}$ and $\mathbf{V(S_3)}$ is $\mathbf{T}$ 
and their join is $\mathbf{RDQDSH_1}$ in $\mathcal{L}$.  It, therefore, follows that $\mathcal{L}$ is isomorphic to 
$\mathbf{(1+B_9)} \times  \mathbf{(1+B_5)} \times \mathbf{B_9}$.
\end{proof}

\begin{Corollary}
The lattice of subvarieties of $\mathbf{RDMSH_1}$ is isomorphic to $\mathbf{(1+B_5)} \times  \mathbf{(1+B_3)} \times \mathbf{B_5}$.
\end{Corollary}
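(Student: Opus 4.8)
The plan is to derive this corollary from the preceding theorem $\mathcal{L} \cong \mathbf{(1+B_9)} \times \mathbf{(1+B_5)} \times \mathbf{B_9}$ by passing to the sublattice (in fact, the principal ideal) of $\mathcal{L}$ consisting of those subvarieties that are contained in $\mathbf{RDMSH_1}$. First I would recall that, by Theorem \ref{regH}, $\mathbf{RDMSH_1} = \mathbf{RDMStSH_1} \subseteq \mathbf{RDQDStSH_1}$, so the lattice of subvarieties of $\mathbf{RDMSH_1}$ is the principal ideal $\mathcal{L}{\restr} \mathbf{RDMSH_1}$ of $\mathcal{L}$ generated by $\mathbf{RDMSH_1}$. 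Since a subvariety of a variety in a congruence-distributive setting is a join of subvarieties generated by the subdirectly irreducibles it contains, and by Theorem \ref{Th} the subdirectly irreducibles in $\mathbf{RDQDStSH_1}$ are exactly the members of $\mathbf{\{2^e, \bar 2^e\}} \cup \mathbf{C_{20}} \cup \mathbf{\{D_1,D_2,D_3\}}$, the key step is to identify precisely which of these $25$ simple algebras lie in $\mathbf{RDMSH_1}$. By Corollary 4.1(a), $\mathbf{RDMSH_1} = \mathbf{V(C_{10}^{dm})} \lor \mathbf{V(D_1,D_2,D_3)}$, and the simple algebras in this variety are (by the analogue of Theorem \ref{Th} restricted to it, or directly from \cite{Sa14a}) exactly $\mathbf{C_{10}^{dm}} \cup \mathbf{\{D_1,D_2,D_3\}}$ together with the subalgebra $\mathbf{2^e}$ of those $\mathbf{L}_i^{dm}$ that contain it.

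The second step is to intersect the three ``coordinate'' classes $S_1, S_2, S_3$ from the proof of the preceding theorem with this list of De Morgan simples. This gives $S_1 \cap \mathbf{RDMSH_1} = \mathbf{\{L_1^{dm}, L_2^{dm}, L_3^{dm}, L_4^{dm}, D_2\}}$, of which $\mathbf{L_1^{dm}, L_2^{dm}, L_3^{dm}, L_4^{dm}}$ each contain $\mathbf{2^e}$ while $\mathbf{D_2}$ is simple with only the trivial subalgebra; $S_2 \cap \mathbf{RDMSH_1} = \mathbf{\{L_9^{dm}, L_{10}^{dm}, D_1\}}$, where $\mathbf{L_9^{dm}, L_{10}^{dm}}$ each contain $\mathbf{\bar 2^e}$ and $\mathbf{D_1}$ has only the trivial subalgebra; and $S_3 \cap \mathbf{RDMSH_1} = \mathbf{\{L_5^{dm}, L_6^{dm}, L_7^{dm}, L_8^{dm}, D_3\}}$, all of which have only the trivial subalgebra. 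Repeating verbatim the interval computations of the previous proof: the interval $\mathbf{[T, V(L_1^{dm}, L_2^{dm}, L_3^{dm}, L_4^{dm}, D_2)]}$ is $\mathbf{1+B_5}$ (four atoms all above $\mathbf{V(2^e)}$, which sits just above $\mathbf{T}$, plus the extra atom $\mathbf{V(D_2)}$, so five atoms over the least nonzero element $\mathbf{V(2^e)}$); the interval $\mathbf{[T, V(L_9^{dm}, L_{10}^{dm}, D_1)]}$ is $\mathbf{1+B_3}$; and $\mathbf{[T, V(L_5^{dm}, L_6^{dm}, L_7^{dm}, L_8^{dm}, D_3)]}$ is the $5$-atom Boolean lattice $\mathbf{B_5}$. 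As in the previous proof, these three subvarieties pairwise intersect in $\mathbf{T}$ and join to $\mathbf{RDMSH_1}$, so $\mathcal{L}{\restr}\mathbf{RDMSH_1} \cong \mathbf{(1+B_5)} \times \mathbf{(1+B_3)} \times \mathbf{B_5}$.

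The main obstacle — though it is really bookkeeping rather than a genuine difficulty — is justifying that the lattice of subvarieties of $\mathbf{RDMSH_1}$ genuinely \emph{decomposes} as this direct product, i.e., that every subvariety of $\mathbf{RDMSH_1}$ is the join of its intersections with $\mathbf{V}(S_i \cap \mathbf{RDMSH_1})$ for $i=1,2,3$, and that these three factors are independent in the lattice-theoretic sense. This follows exactly as in the preceding theorem's proof: because $\mathbf{RDMSH_1}$ is (by \cite{Sa12}) a discriminator variety, its subvarieties are in bijection with sets of its subdirectly irreducible members closed under subalgebras, and since the three families of simples have disjoint subalgebra-closures (the only nontrivial shared subalgebras being $\mathbf{2^e}$ inside the $S_1$-family and $\mathbf{\bar 2^e}$ inside the $S_2$-family, both staying within their own family), a subvariety is determined independently by its trace on each of the three families. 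One must also double-check that no $\mathbf{L_i^{dm}}$ in the $S_3$-family contains $\mathbf{2^e}$ or $\mathbf{\bar 2^e}$ as a subalgebra — a routine inspection of the tables in Figure 1 together with the stipulation $a'=a$ — which is what makes the third factor a plain Boolean lattice $\mathbf{B_5}$ rather than $\mathbf{1+B_5}$.
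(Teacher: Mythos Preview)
Your overall approach is exactly the one the paper intends (the corollary is stated without proof, as a direct specialization of the preceding theorem): restrict the three sets $S_1,S_2,S_3$ to the De Morgan simples and repeat the interval analysis. The identification $S_i\cap\mathbf{RDMSH_1}$ and the resulting factors $\mathbf{(1+B_5)}$, $\mathbf{(1+B_3)}$, $\mathbf{B_5}$ are correct.

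However, your description of the subalgebra structure of $\mathbf{D_1}$ and $\mathbf{D_2}$ is wrong, and this makes your justification of the first two factors internally inconsistent. From the tables in Figure~1 one reads $0\to 1=1$ in $\mathbf{D_2}$, so $\{0,1\}$ is a subalgebra of $\mathbf{D_2}$ isomorphic to $\mathbf{2^e}$; likewise $0\to 1=0$ in $\mathbf{D_1}$, so $\mathbf{D_1}$ contains $\mathbf{\bar 2^e}$. (Only $\mathbf{D_3}$, where $0\to 1=a$, has no two-element subalgebra.) This is precisely what the paper asserts in the proof of the previous theorem when it says that each simple in $S_1$ contains $\mathbf{2^e}$ and each simple in $S_2$ contains $\mathbf{\bar 2^e}$. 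Your parenthetical for the first factor then contradicts itself: you claim $\mathbf{D_2}$ has only the trivial subalgebra, yet count $\mathbf{V(D_2)}$ as an atom sitting over $\mathbf{V(2^e)}$. If $\mathbf{D_2}$ really had only the trivial subalgebra, the interval $[\mathbf T,\mathbf{V}(S_1\cap\mathbf{RDMSH_1})]$ would have $34$ elements (subalgebra-closed subsets of $\{\mathbf{2^e},\mathbf{L_1^{dm}},\dots,\mathbf{L_4^{dm}},\mathbf{D_2}\}$), not the $33$ of $\mathbf{1+B_5}$. The correct fact --- that all five algebras in $S_1\cap\mathbf{RDMSH_1}$ contain $\mathbf{2^e}$, and all three in $S_2\cap\mathbf{RDMSH_1}$ contain $\mathbf{\bar 2^e}$ --- gives the intervals $\mathbf{1+B_5}$ and $\mathbf{1+B_3}$ directly, so once you fix this the argument goes through.
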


\begin{Corollary}
The lattice of subvarieties of $\mathbf{RDPCSH_1}$ is isomorphic to $\mathbf{(1+B_4)} \times  \mathbf{(1+B_2)} \times \mathbf{B_4}$.
\end{Corollary}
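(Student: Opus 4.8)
The final statement is the Corollary asserting that the lattice of subvarieties of $\mathbf{RDPCSH_1}$ is isomorphic to $\mathbf{(1+B_4)} \times \mathbf{(1+B_2)} \times \mathbf{B_4}$. The plan is to mimic the proof of the Theorem describing $\mathcal{L}$, restricting attention to the dually pseudocomplemented simples. First I would identify, using Theorem \ref{Th}, which of the simple algebras in $\mathbf{\{2^e, \bar{2}^e\}} \cup \mathbf{C_{20}} \cup \mathbf{\{D_1,D_2,D_3\}}$ actually satisfy (PC), i.e.\ $x \lor x' \approx 1$; these are exactly the simples of $\mathbf{RDPCSH_1}$. The expectation is that the surviving list consists of $\mathbf{2^e}$ together with the ``$dp$''-type algebras $\mathbf{L_i^{dp}}$ (where $a' = 1$, so $a \lor a' = 1$), and that the ``$dm$''-type algebras (where $a' = a$, so $a \lor a' = a \neq 1$) and $\mathbf{\bar{2}^e}$ drop out, while among the four-element algebras only those among $\mathbf{D_1,D_2,D_3}$ whose unary operation gives $a \lor a' = 1$ survive --- but since $a' = a$ in all three of $\mathbf{D_1,D_2,D_3}$, none of these survive. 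So the simples of $\mathbf{RDPCSH_1}$ should be $\mathbf{\{2^e\}} \cup \mathbf{C_{10}^{dp}}$.

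Next I would partition this set of simples into three blocks $S_1', S_2', S_3'$ exactly parallel to the blocks $S_1, S_2, S_3$ in the Theorem's proof, namely $S_1' = \mathbf{\{L_i^{dp}: i = 1,2,3,4\}} \cup \mathbf{\{2^e\}}$ (deleting $D_2$ and the $dm$-algebras and $\bar2^e$ from $S_1$), $S_2' = \mathbf{\{L_i^{dp}: i = 9,10\}}$, and $S_3' = \mathbf{\{L_i^{dp}: i = 5,6,7,8\}}$. As in the Theorem, each $\mathbf{L_i^{dp}}$ with $i \in \{1,2,3,4\}$ has $\mathbf{2^e}$ as its only proper subalgebra, so the interval $\mathbf{[V(2^e), V(S_1')]}$ is a $4$-atom Boolean lattice and $\mathbf{[T, V(S_1')]} \cong \mathbf{1+B_4}$. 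The algebras $\mathbf{L_9^{dp}, L_{10}^{dp}}$ should have $\mathbf{2^e}$ (or $\bar2^e$?) --- one must check which two-element subalgebra they contain; if it is not one of the surviving simples this block becomes a plain Boolean lattice. Here I would follow the Theorem's pattern: $S_2$ there was tied to $\bar2^e$, but since $\bar2^e \notin \mathbf{RDPCSH_1}$, the two algebras $\mathbf{L_9^{dp}, L_{10}^{dp}}$ become simples with no nontrivial subalgebra inside the variety, giving $\mathbf{[T, V(S_2')]} \cong \mathbf{1+B_2}$ only if they still share a common subalgebra in the variety, and $\mathbf{B_2}$-with-a-new-bottom is $\mathbf{1+B_2}$ --- this is the delicate bookkeeping point. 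For $S_3'$, as before each of $\mathbf{L_5^{dp}, \ldots, L_8^{dp}}$ has only the trivial subalgebra, so $\mathbf{[T, V(S_3')]} \cong \mathbf{B_4}$.

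Finally, I would observe, exactly as in the Theorem, that $\mathbf{V(S_1') \cap V(S_2') \cap V(S_3') = T}$ (the blocks generate independent varieties because simples in distinct blocks embed in no common algebra) and that $\mathbf{V(S_1') \lor V(S_2') \lor V(S_3') = RDPCSH_1}$ (every simple lies in some block, and a congruence-distributive variety generated by finitely many finite simples is the join of the subvarieties they generate, by Jónsson's lemma). Therefore $\mathcal{L}(\mathbf{RDPCSH_1}) \cong \mathbf{(1+B_4)} \times \mathbf{(1+B_2)} \times \mathbf{B_4}$.

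The main obstacle I anticipate is \emph{not} the lattice-theoretic decomposition, which is routine once the pieces are in place, but the careful determination of (a) which simples satisfy (PC) and (b) the exact subalgebra lattice of each surviving simple \emph{relative to the ambient variety} $\mathbf{RDPCSH_1}$ --- in particular confirming that $\mathbf{L_9^{dp}}$ and $\mathbf{L_{10}^{dp}}$ genuinely contribute a $\mathbf{1+B_2}$ factor rather than a $\mathbf{B_2}$ factor, which hinges on whether they contain a two-element $\mathbf{RDPCSH_1}$-subalgebra. Since $\mathbf{2^e}$ is the only two-element algebra in $\mathbf{RDPCSH_1}$ (as $\bar2^e$ fails (PC) because $0 \lor 0' = 0 \lor 1 = 1$ is fine but one should recheck against $\bar2^e$'s operations), a short verification using the tables in Figure 1 settles the matter; this is exactly the kind of finite check the author leaves to the reader in the parallel Corollaries above.
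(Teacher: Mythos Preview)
Your overall strategy --- restrict Theorem~\ref{Th} to the simples that satisfy (PC) and redo the three-block decomposition --- is exactly the intended one, and the paper leaves the corollary without proof for precisely this reason. However, your identification of the simples contains a concrete error that, if carried through, would give the wrong second factor.

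The error is the claim that $\mathbf{\bar 2^e}$ fails (PC). In any $\mathbf{DQDSH}$-algebra, axiom (a) forces $0'=1$ and $1'=0$, so in \emph{every} two-element algebra one has $0\lor 0'=1$ and $1\lor 1'=1$; the identity (PC) holds trivially. The difference between $\mathbf{2^e}$ and $\mathbf{\bar 2^e}$ lies only in $\to$, not in $'$. Hence the full list of simples in $\mathbf{RDPCSH_1}$ is $\{\mathbf{2^e},\mathbf{\bar 2^e}\}\cup \mathbf{C_{10}^{dp}}$, not $\{\mathbf{2^e}\}\cup \mathbf{C_{10}^{dp}}$. (Your claim is also internally inconsistent: you accept $\mathbf{L_9^{dp}}\in\mathbf{RDPCSH_1}$, and $\{0,1\}$ is a subalgebra of $\mathbf{L_9^{dp}}$ whose $\to$-table is that of $\mathbf{\bar 2}$; since varieties are closed under subalgebras, $\mathbf{\bar 2^e}$ would have to lie in $\mathbf{RDPCSH_1}$ anyway.)

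Once $\mathbf{\bar 2^e}$ is restored, the three blocks are $S_1'=\{\mathbf{L_i^{dp}}:i=1,2,3,4\}$ (each containing $\mathbf{2^e}$), $S_2'=\{\mathbf{L_i^{dp}}:i=9,10\}$ (each containing $\mathbf{\bar 2^e}$), and $S_3'=\{\mathbf{L_i^{dp}}:i=5,6,7,8\}$ (only the trivial subalgebra). This yields the intervals $\mathbf{1+B_4}$, $\mathbf{1+B_2}$, and $\mathbf{B_4}$ exactly as in the Theorem, and your product argument then goes through verbatim. Without $\mathbf{\bar 2^e}$ the middle interval would collapse to $\mathbf{B_2}$, contradicting the stated result; so the point you flagged as ``delicate bookkeeping'' is indeed the crux, and your expectation there was the one place the proposal went wrong.
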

Similar formulas can be obtained for other subvarieties of $\mathbf{RDQDSH_1}$.
\vspace{1cm}
\section{Amalgamation}

We now examine the Amalgamation Property for subvarities of the variety $\mathbf{RDQDStSH_1}$.  For this purpose we need the following theorem from \cite{GrLa71}.

\begin{Theorem}  Let K be an equational class of algebras satisfying the Congruence Extension Property, and let every subalgebra of each subdirectly irreducible algebra in K be subdirectly irreducible.  Then K satisfies the Amalgamation Property if and only if whenever A, B, C are subdirectly irreducible algebras in K with A a common subalgebra of B and C, the amalgam (A; B, C) can be amalgamated in K.
\end{Theorem}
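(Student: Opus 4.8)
Since the statement is precisely the Gr\"atzer--Lakser criterion \cite{GrLa71}, the plan is to take it as known; for completeness I indicate how I would argue. The implication ($\Rightarrow$) is immediate, because an amalgam of subdirectly irreducibles is in particular an amalgam. For ($\Leftarrow$), I would start from an arbitrary amalgam $(A;B,C)$ in $\mathbf{K}$, with $A$ regarded as a common subalgebra of $B$ and $C$, and aim to build $D \in \mathbf{K}$ into which $B$ and $C$ embed compatibly over $A$. The strategy is to separate one pair of distinct elements of $B$ (and, symmetrically, one pair in $C$) at a time, reducing each instance to an amalgam of subdirectly irreducibles to which the hypothesis applies, and then to form the direct product of all the algebras so obtained --- legitimate because $\mathbf{K}$, being equational, is closed under products.

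For the core step I would fix $b_1 \ne b_2$ in $B$, pick (by Zorn's Lemma) a congruence $\varphi$ on $B$ maximal with $(b_1,b_2)\notin\varphi$, so that $B/\varphi$ is subdirectly irreducible, and set $\psi := \varphi\cap A^2$, so that $A/\psi$ embeds naturally into $B/\varphi$ and is therefore subdirectly irreducible whenever it is nontrivial (the trivial case being disposed of directly via the product $B/\varphi\times C_j$). I would then use the Congruence Extension Property to extend $\psi$ to a congruence $\chi$ on $C$ with $\chi\cap A^2=\psi$, yielding an embedding of $A/\psi$ into $C/\chi$; decomposing $C/\chi$ as a subdirect product of subdirectly irreducibles and using that the monolith of $A/\psi$ survives some projection, I would find a subdirectly irreducible factor $C_j$ containing a copy of $A/\psi$. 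Applying the hypothesis to $(A/\psi;B/\varphi,C_j)$ produces an algebra $D\in\mathbf{K}$ with embeddings $B/\varphi\hookrightarrow D$ and $C_j\hookrightarrow D$ agreeing on $A/\psi$; composing with $B\to B/\varphi$ and $C\to C/\chi\to C_j$ gives homomorphisms $B\to D$ and $C\to D$ that agree on $A$ and separate $b_1$ from $b_2$.

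Finally, I would carry this out for every pair of distinct elements of $B$ and of $C$, take the direct product $D^{*}$ of all the resulting algebras, and observe that the induced homomorphisms $B\to D^{*}$ and $C\to D^{*}$ agree on $A$ and are injective (each pair of distinct points is separated in the corresponding coordinate); hence $D^{*}\in\mathbf{K}$ amalgamates $(A;B,C)$, which settles ($\Leftarrow$).

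The step I expect to be the main obstacle is the passage from a point-separating quotient $B/\varphi$ of $B$ to a \emph{compatible} subdirectly irreducible quotient of $C$: it is here that the Congruence Extension Property is indispensable, to transport $\psi$ from $A$ across to $C$, and it is the hypothesis that subalgebras of subdirectly irreducibles are subdirectly irreducible that guarantees $(A/\psi;B/\varphi,C_j)$ really is an amalgam of subdirectly irreducibles so that the assumed special case applies. In the present note these two hypotheses will in turn be verified for every subvariety of $\mathbf{RDQDStSH_1}$: the Congruence Extension Property because $\mathbf{DMSH_1}$, hence $\mathbf{RDQDStSH_1}$, is a discriminator variety, and the condition on subdirectly irreducibles from the explicit list in Theorem \ref{Th}, which moreover reduces the amalgamation check to the finitely many amalgams built from the algebras listed there.
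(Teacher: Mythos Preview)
The paper does not prove this theorem at all; it simply quotes it from \cite{GrLa71} as a tool, with no proof supplied. You correctly recognize this and treat the result as known, so in that sense your proposal matches the paper exactly. The sketch you add ``for completeness'' is the standard Gr\"atzer--Lakser argument and is sound (including the handling of the trivial $A/\psi$ case and the use of CEP to transport $\psi$ to $C$), but it goes beyond what the paper itself does. Your closing paragraph previewing how the hypotheses are verified for $\mathbf{RDQDStSH_1}$ also accurately anticipates the content of the paper's subsequent theorem.
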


\begin{Theorem}
Every subvariety of $\mathbf{RDQDStSH_1}$ has the Amalgamation Property.
\end{Theorem}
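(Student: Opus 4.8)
The plan is to invoke the Gr\"atzer--Lakser criterion (the theorem quoted just above) and reduce the Amalgamation Property to a finite, entirely combinatorial check on the simple algebras listed in Theorem \ref{Th}. First I would verify the two structural hypotheses of that theorem for every subvariety $\mathbf{V}$ of $\mathbf{RDQDStSH_1}$: that $\mathbf{V}$ has the Congruence Extension Property, and that every subalgebra of a subdirectly irreducible member of $\mathbf{V}$ is again subdirectly irreducible. Both are inherited from $\mathbf{RDQDStSH_1}$ itself: since $\mathbf{RDMSH_1}$ (hence also the larger $\mathbf{DMSH_1}$ and, by the remarks taken from \cite{Sa12}, $\mathbf{RDQDStSH_1}$) is a discriminator variety, it is congruence-distributive, congruence-permutable and has CEP, and all of these pass to subvarieties. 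For the second hypothesis one uses Theorem \ref{Th}: the subdirectly irreducibles of $\mathbf{RDQDStSH_1}$ are exactly the simple algebras in $\mathbf{\{2^e,\bar 2^e\}}\cup\mathbf{C_{20}}\cup\mathbf{\{D_1,D_2,D_3\}}$, and a subalgebra of a simple algebra in a discriminator variety is again simple (the discriminator term restricts), hence subdirectly irreducible; so the same holds within any subvariety, where the subdirectly irreducibles form a subclass of this finite list.

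With the hypotheses in place, the Amalgamation Property for a subvariety $\mathbf{V}$ reduces to: for every triple $A,B,C$ of subdirectly irreducible (equivalently, by Theorem \ref{Th}, simple) algebras of $\mathbf{V}$ with $A$ a common subalgebra of $B$ and $C$, the V-formation $(A;B,C)$ embeds into some member of $\mathbf{V}$. Here I would exploit the very simple subalgebra structure of the list: by inspection each of the $25$ simple algebras has as its only proper subalgebra the trivial algebra $\mathbf{T}$, except for the five algebras in $S_1$ which contain $\mathbf{2^e}$ and the three in $S_2$ which contain $\mathbf{\bar 2^e}$ (this is exactly the subalgebra bookkeeping already carried out in the proof of the subvariety-lattice theorem). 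So the common subalgebra $A$ can only be $\mathbf{T}$, $\mathbf{2^e}$, or $\mathbf{\bar 2^e}$.

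The amalgamation step is then handled by producing, in each case, an explicit algebra of $\mathbf{V}$ into which $B$ and $C$ both embed compatibly over $A$. When $A=\mathbf{T}$ there is nothing subtle: $B\times C$ lies in $\mathbf{V}$ and the two coordinate maps $b\mapsto(b,1),\ c\mapsto(1,c)$ are embeddings agreeing on the (empty-beyond-constants) common part — more precisely one takes the usual amalgam inside $B\times C$. When $A=\mathbf{2^e}$ (so $B,C\in S_1$ and both contain $\mathbf{2^e}$ as the subalgebra on $\{0,1\}$), the fibered product $B\times_{\mathbf{2^e}}C=\{(b,c): b,c \text{ agree on }\{0,1\}\}$, which here simply means ordinary pairs since $0,1$ are constants, again sits inside $B\times C\in\mathbf{V}$, and $b\mapsto(b,b\restr_{\{0,1\}}\text{-canonical})$ works; the only thing to check is that the relevant sub\emph{algebra} generated is closed and lies in $\mathbf{V(S_1)}\subseteq\mathbf{V}$, which follows because $\mathbf{V}$, being a subvariety, is closed under the finite products and subalgebras involved. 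The case $A=\mathbf{\bar 2^e}$ with $B,C\in S_2$ is identical in form. Finally one must confirm that the amalgam produced actually lies in the \emph{given} subvariety $\mathbf{V}$ and not merely in $\mathbf{RDQDStSH_1}$: since $B,C\in\mathbf{V}$ and $\mathbf{V}$ is closed under $\mathbf{P}$ and $\mathbf{S}$, any subalgebra of $B\times C$ is in $\mathbf{V}$, so this is automatic.

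The main obstacle I anticipate is not any single hard computation but rather the case analysis ensuring that, for the triples with $A=\mathbf{2^e}$ or $A=\mathbf{\bar 2^e}$, the embeddings of $B$ and $C$ into the product genuinely restrict to the \emph{same} embedding of $A$ — i.e.\ that the copy of $\mathbf{2^e}$ inside $B$ and the copy inside $C$ are identified correctly by the diagonal. Because $0$ and $1$ are nullary operations, the subalgebra $\{0,1\}$ is canonically the same in every algebra on the list, so the diagonal map $a\mapsto(a,a)$ does the job and this potential difficulty collapses; the residual work is the purely mechanical verification, algebra by algebra from Figure 1 and Theorem \ref{Th}, that these are indeed the only subalgebras, so that no further cases arise.
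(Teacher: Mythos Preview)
Your overall strategy is exactly the paper's: invoke the Gr\"atzer--Lakser criterion, note CEP (from the discriminator structure established in \cite{Sa12}) and that subalgebras of simples are simple, and then claim that every V-formation of simples can be amalgamated inside the given subvariety. The paper is even terser than you are at this last step---it says only that ``it is not hard to see'' in view of Theorem \ref{Th}---so your case-split on $A$ and the product construction are more explicit than anything the paper actually writes down.

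The genuine gap lies precisely in that last step, and it is not merely a matter of missing detail. Your embedding of $B$ into $B\times C$ over $A=\mathbf{2^e}$ requires, for its second coordinate, a homomorphism $B\to C$; when $B,C$ are non-isomorphic simples with nullary $0\neq 1$, neither an isomorphism nor a constant map is available, so no homomorphism $B\to C$ exists and $B$ does not embed in $B\times C$ at all. The obstruction is in fact intrinsic, not an artifact of the product construction: take $B=\mathbf{L_1^{dm}}$ and $C=\mathbf{L_2^{dm}}$, both containing $\mathbf{2^e}$ on $\{0,1\}$. Any $D\in\mathbf{RDQDStSH_1}$ containing copies of $B$ and $C$ is a subdirect product of simples $S_i$ drawn from the list of twenty-five, and each composite $B\hookrightarrow D\to S_i$ is a homomorphism from a simple algebra into a nontrivial one, hence an embedding; the same holds for $C$. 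Every $S_i$ would therefore contain both $\mathbf{L_1^{dm}}$ and $\mathbf{L_2^{dm}}$ as subalgebras, and inspection of the list shows that no simple on it has this property. Thus the V-formation $(\mathbf{2^e};\,\mathbf{L_1^{dm}},\,\mathbf{L_2^{dm}})$ cannot be amalgamated in $\mathbf{RDQDStSH_1}$, and the ``purely mechanical verification'' you anticipate would fail rather than succeed; the paper's proof, which does not carry out the verification either, is exposed to the same difficulty. Two minor side points: since $0$ and $1$ are distinct constants, the trivial algebra is never a subalgebra of a nontrivial member, so your case $A=\mathbf{T}$ does not arise; and your counts of ``five'' and ``three'' for the algebras in $S_1$ and $S_2$ containing $\mathbf{2^e}$ and $\mathbf{\bar 2^e}$ should read nine and five.
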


\begin{proof}
It follows from \cite{Sa12} that $\mathbf{RDQDStSH_1}$ has CEP.  Also, it follows from Theorem \ref{Th} that every subalgebra of each subdirectly irreducible (= simple) algebra in $\mathbf{RDQDStSH_1}$ is subdirectly irreducible.  Therefore, in each subvariety $\mathbf{V}$ of $\mathbf{RDQDStSH_1}$, we need only consider an amalgam $(A: B, C)$, where $A, B, C$ are simple in $\mathbf{RDQDStSH_1}$ and $A$ a subalgebra of $B$ and $C$.  Then it is not hard to see, in view of the description of simples in $\mathbf{RDQDStSH_1}$ given in Theorem \ref{Th}, that $(A: B, C)$ can be amalgamated in $\mathbf{V}$.  
\end{proof}

\vspace{1cm}
\section{Concluding Remarks and Open Problems}

We know from \cite{Sa12} that every simple algebra in $\mathbf{RDQDH_1}$ is quasiprimal.

Of all the 25 simple algebras in $\mathbf{RDQDStSH_1}$, $\mathbf{2^e}$, 
 $\mathbf{\bar{2}^e}$, and $\mathbf{L_i}, i = 5,6,7,8$, and $\mathbf{D_3}$ are primal algebras and the rest are semiprimal algebras.  
We now mention some open problems for further research.\\

Problem 1: For each variety $\mathbf{V(L)}$, where $\mathbf{L}$ is a simple algebra in $\mathbf{RDMSH_1}$ (except $\mathbf{V(2^e)}$), find a Propositional Calculus $\mathbf{P(V)}$ such that the equivalent algebraic semantics for $\mathbf{P(V)}$ is $\mathbf{V(L)}$ (with $1$ as the designated truth value, using $\to$ and $'$ as implication and negation respectively). (For the variety $\mathbf{V(2^e)}$, the answer is, of course, well known: Classical Propositional Calculus.) \par
 We think such (many-valued) logics will be of interest in computer science and  in switcing circuit theory. \\ 

Problem 2: Describe simples in the variety of pseudocommutative $\mathbf{RDQDStSH_1}$-algebras.\\

Problem 3:  Find equational bases for the remaining subvarieties  of  $\mathbf{RDMSH_1}$.\\

Problem 4: Let $\mathbf{RDmsStSH_1}$ denote the subvariety of $\mathbf{DQDStSH_1}$ defined by: $(x \lor y)' \approx x' \land y'$.  Describe simples in  
 $\mathbf{RDmsStSH_1}$.\\

\vspace{.5cm}

\small

\ \\
\ \ \ \ \ \ \ \ \\
Department of Mathematics\\
State University of New York\\
New Paltz, NY 12561\\
\
\\
sankapph@newpaltz.edu\\
\ \

\end{document}